\numberwithin{equation}{section}
\theoremstyle{plain}                % title and number in bold, text italic
\newtheorem{theorem}{Theorem}[section]
\newtheorem{lemma}[theorem]{Lemma}
\newtheorem{proposition}[theorem]{Proposition}
\newtheorem{corollary}[theorem]{Corollary}
\theoremstyle{definition}           % title and number in bold, text normal
\newtheorem{definition}[theorem]{Definition}
\newtheorem{example}[theorem]{Example}
\newenvironment{assumption}[1]
  {\innerassumption}
  {\endinnerassumption}
\theoremstyle{remark}
\newtheorem{remark}[theorem]{Remark}
\newcommand{\define}[1]{{\textbf{#1}}}
\newcommand{\movealign}[1]{\hspace{-#1em}&\hspace{#1em}}
\providecommand{\alias}{}
\renewcommand{\alias}[1]{\providecommand{#1}{}\renewcommand{#1}}
\DeclarePairedDelimiter\ab{\langle}{\rangle} % <angle brackets>
\DeclarePairedDelimiter\abs{\lvert}{\rvert}   % |absolute value|
\DeclarePairedDelimiter\norm{\lVert}{\rVert}  % ||norm||
\DeclarePairedDelimiter\bkt{[}{]}             % [brackets]
\DeclarePairedDelimiter\brc{\{}{\}}           % {braces}
\DeclarePairedDelimiter\prn{(}{)}             % {parentheses}
\providecommand\giv{}
\DeclarePairedDelimiterXPP\pp[1]{\mathbb{P}}[]{}{
   \renewcommand\giv{\nonscript\:\delimsize\vert\nonscript\:\mathopen{}}
   #1}
\DeclarePairedDelimiterXPP\ppup[2]{\mathbb{P}^{#1}}[]{}{
   \renewcommand\giv{\nonscript\:\delimsize\vert\nonscript\:\mathopen{}}
   #2}
\DeclarePairedDelimiterXPP\ppdown[2]{\mathbb{P}_{#1}}[]{}{
   \renewcommand\giv{\nonscript\:\delimsize\vert\nonscript\:\mathopen{}}
   #2}
   \DeclarePairedDelimiterXPP\ppupdown[3]{\mathbb{P}^{#1}_{#2}}[]{}{
   \renewcommand\giv{\nonscript\:\delimsize\vert\nonscript\:\mathopen{}}
   #3}
\DeclarePairedDelimiterXPP\ee[1]{\mathbb{E}}[]{}{
   \renewcommand\giv{\nonscript\:\delimsize\vert\nonscript\:\mathopen{}}
   #1}
\DeclarePairedDelimiterXPP\var[1]{\mathrm{Var}}[]{}{
   \renewcommand\giv{\nonscript\:\delimsize\vert\nonscript\:\mathopen{}}
   #1}
 \DeclarePairedDelimiterXPP\cov[1]{\mathrm{Cov}}[]{}{
   \renewcommand\giv{\nonscript\:\delimsize\vert\nonscript\:\mathopen{}}
   #1}
\DeclarePairedDelimiterXPP\eeup[2]{\mathbb{E}^{#1}}[]{}{
   \renewcommand\giv{\nonscript\:\delimsize\vert\nonscript\:\mathopen{}}
   #2}
\DeclarePairedDelimiterXPP\eedown[2]{\mathbb{E}_{#1}}[]{}{
   \renewcommand\giv{\nonscript\:\delimsize\vert\nonscript\:\mathopen{}}
   #2}
   \DeclarePairedDelimiterXPP\eeupdown[3]{\mathbb{E}^{#1}_{#2}}[]{}{
   \renewcommand\giv{\nonscript\:\delimsize\vert\nonscript\:\mathopen{}}
   #3}
\DeclarePairedDelimiterXPP\eeud[3]{\mathbb{E}^{#1}_{#2}}[]{}{
   \renewcommand\giv{\nonscript\:\delimsize\vert\nonscript\:\mathopen{}}
   #3}
\let\preexp\exp
\let\exp\relax
\DeclarePairedDelimiterXPP\exp[1]{\preexp}(){}{#1}
\newcommand{\Exp}[1]{e^{#1}}
\newcommandx{\prf}[2][2=t\in\bkt{0,T}]{ \set{#1}_{#2}}
\newcommandx{\seq}[2][2=n\in\N]{\brc*{#1}_{#2}}
\newcommandx{\sseq}[2][2=n\in\N]{\{#1\}_{#2}}
\newcommand{\seqz}[1]{\seq{#1}[n\in\N_0]}
\newcommandx{\seqk}[1]{\seq{#1}[k\in\N]}
\newcommandx{\seqkz}[1]{\seq{#1}[k\in\N_0]}
\newcommandx{\seqj}[1]{\seq{#1}[j\in\N]}
\newcommandx{\seqjz}[1]{\seq{#1}[j\in\N_0]}
\newcommandx{\seqi}[1]{\seq{#1}[i\in\N]}
\newcommandx{\seqiz}[1]{\seq{#1}[i\in\N_0]}
\newcommandx{\seqm}[1]{\seq{#1}[m\in\N]}
\newcommandx{\seqmz}[1]{\seq{#1}[m\in\N_0]}
\newcommandx{\sseqk}[1]{\sseq{#1}[k\in\N]}
\newcommandx{\sseqkz}[1]{\sseq{#1}[k\in\N_0]}
\newcommandx{\fml}[2]{\set{#1}_{#2}}
\newcommandx{\tol}[1]{\stackrel{#1}{\longrightarrow}}
\newcommandx{\eqd}[1][1=d]{\stackrel{(#1)}{=}}
\newcommandx{\neqd}[1][1=d]{\stackrel{(#1)}{\neq}}
\let\oldabs\abs \def\abs{\@ifstar{\oldabs}{\oldabs*}}
\let\oldab\ab \def\ab{\@ifstar{\oldab}{\oldab*}}
\let\oldnorm\norm \def\norm{\@ifstar{\oldnorm}{\oldnorm*}}
\let\oldbkt\bkt \def\bkt{\@ifstar{\oldbkt}{\oldbkt*}}
\let\oldbrc\brc \def\brc{\@ifstar{\oldbrc}{\oldbrc*}}
\let\oldprn\prn \def\prn{\@ifstar{\oldprn}{\oldprn*}}
\let\oldpp\pp \def\pp{\@ifstar{\oldpp}{\oldpp*}}
\let\oldppup\ppup \def\ppup{\@ifstar{\oldppup}{\oldppup*}}
\let\oldppdown\ppdown \def\ppdown{\@ifstar{\oldppdown}{\oldppdown*}}
\let\oldee\ee \def\ee{\@ifstar{\oldee}{\oldee*}}
\let\oldvar\var \def\var{\@ifstar{\oldvar}{\oldvar*}}
\let\oldeeup\eeup \def\eeup{\@ifstar{\oldeeup}{\oldeeup*}}
\let\oldeedown\eedown \def\eedown{\@ifstar{\oldeedown}{\oldeedown*}}
\let\oldeeupdown\eeupdown \def\eeupdown{\@ifstar{\oldeeupdown}{\oldeeupdown*}}
\let\oldeeud\eeud \def\eeud{\@ifstar{\oldeeud}{\oldeeud*}}
\let\oldexp\exp \def\exp{\@ifstar{\oldexp}{\oldexp*}}
\newcommand{\Bee}[1]{ \ee*[\Big]{#1} }
\newcommand{\bee}[1]{ \ee*[\big]{#1} }
\newcommand{\Bprn}[1]{ \prn*[\Big]{#1} }
\newcommand{\bprn}[1]{ \prn*[\big]{#1} }
\newcommand{\Bbrc}[1]{ \brc*[\Big]{#1} }
\newcommand{\bexp}[1]{\exp*[\big]{#1}}
\newcommand{\babs}[1]{\abs*[\big]{#1}}
\newcommand{\set}[1]{ \brc{#1} }
\newcommand{\sets}[2]{ \brc{#1\,:\,#2} }
\newcommand{\Bsets}[2]{ \Bbrc{#1\,:\,#2} }
\alias{\R}{{\mathbb R}}
\alias{\C}{{\mathbb C}}
\alias{\Z}{{\mathbb Z}}
\alias{\N}{{\mathbb N}}
\alias{\Nz}{{\mathbb N}_0}
\DeclareMathOperator\Cov{Cov}
\newcommand{\oo}[1]{\frac{1}{#1}}
\newcommand{\ot}{\oo{2}} 
\newcommand{\too}[1]{\tfrac{1}{#1}}
\newcommand{\tot}{\too{2}} 
\newcommand{\ind}[1]{ 1_{{#1}}} % convex-theoretic indicator of a set
\newcommand{\inds}[1]{ 1_{\set{#1}}} % indicator of a curly set
\newcommand{\downto}{\searrow}
\newcommand{\upto}{\nearrow}
\newcommand{\tsum}{\textstyle\sum}
\newcommand{\tprod}{\textstyle\prod}
\newcommand{\al}{\alpha}
\newcommand{\be}{\beta}
\newcommand{\dl}{\delta}
\newcommand{\eps}{\varepsilon}
\newcommand{\ld}{\lambda}
\newcommand{\gm}{\gamma}
\newcommand{\vp}{\varphi}
\newcommand{\sB}{\mathcal{B}}
\newcommand{\sL}{\mathcal{L}} \newcommand{\bL}{\mathbb{L}}
\newcommand{\sM}{\mathcal{M}}
\newcommand{\sS}{\mathcal{S}}
\newcommand{\sY}{\mathcal{Y}}
 \newcommand{\hF}{\hat{F}}
\newcommand{\tH}{\tilde{H}}
\newcommand{\tP}{\tilde{P}}
\newcommand{\lone}{\bL^1}
\newcommandx{\pds}[2][1= ]{\frac{\partial #1}{\partial #2}}
\newcommandx{\tpds}[2][1= ]{\tfrac{\partial #1}{\partial #2}}
\newcommandx{\dd}[2][1= ]{\frac{d #1}{ d#2}}
\newcommandx{\tdd}[2][1= ]{\tfrac{d #1}{ d#2}}
\newcommandx{\pdt}[2][1= ]{\frac{\partial^2 #1}{\partial #2}}
\newcommandx{\tpdt}[2][1= ]{\tfrac{\partial^2 #1}{\partial #2}}
\newcommandx{\pdm}[3][2= ]{\frac{\partial^{#1} #2}{\partial #3}}
\newcommandx{\tpdm}[4][2= ]{\tfrac{\partial^{#1} #2}{\partial #3}}
\newcommandx{\up}[2]{{#1}^{#2}}
\newcommandx{\upp}[2]{{#1}^{(#2)}}
\providecommand{\upm}[1]{\upp{#1}{m}}
\newcommand{\efor}{\text{ for }}
\newcommand{\eforall}{\text{ for all }}
\newcommand{\eand}{\text{ and }}
\newcommand{\ewhere}{\text{ where }}
\newcommand{\ewith}{\text{ with }}
\newcommand{\dlink}[2]{\href{http://dlmf.nist.gov/#1}{#2}}
\newcommand{\cdg}[2]{\cite[\dlink{#1}{#2}]{DLMF}}
\renewcommand{\define}[1]{\emph{#1}}
\newcommand{\mgf}{M}
\newcommand{\apc}{\approx}
\newcommand{\drtr}{\tfrac{1-\dl}{2} }
\newcommand{\hmu}{\hat{\mu}} % letters
\newcommand{\hnu}{\hat{\nu}}
\newcommand{\tmod}[1]{ \,(\text{mod } #1)}
\newcommand{\slop}{\sL^p_{\mathrm{loc}}}
\newcommand{\arr}[1]{\{#1\}_{m,n\in\N}}
\newcommand{\ceil}[1]{\lceil #1 \rceil}
\newcommand{\ct}{\ceil{\tau/\eps_n}}
\newcommand{\cs}{\ceil{\sigma/\eps_n}}
\newcommand{\cz}{\ceil{\zeta/\eps_n}}
\newcommand{\co}{\ceil{1/\eps_n}}
\newcommand{\tct}{\ceil{\tfrac{\tau}{\eps_n}}}
\newcommand{\tcs}{\ceil{\tfrac{\sigma}{\eps_n}}}
\newcommand{\cat}{m_n(\tau)}
\newcommand{\cas}{m_n(\sg)}
\newcommand{\sg}{\sigma}
\newcommand{\te}{\tilde{e}}
\newcommand{\txi}{\tilde{\xi}}
\newcommand{\hrho}{\hat{\rho}}
\newcommand{\hgF}{{}_2 F_1}
\newcommand{\hpi}{\hat{\pi}}
\newcommand{\tg}{\tilde{g}}
\renewcommand{\upm}[1]{{#1}^m}
\newcommand{\utH}[1]{\up{\tH}{#1}}
\newcommand{\utP}[1]{\up{\tP}{#1}}
\newcommand{\upi}[1]{\up{\pi}{#1}}
\newcommand{\seqmn}[1]{\{#1\}_{m,n\in\N}}
\newcommand{\zT}{[0,T]}
\newcommand{\zt}{[0,t]}
\newcommand{\pos}{[0,\infty)}
\newcommand{\smlf}{\sM_{\mathrm{lf}}}
\newcommand{\smp}{\sM_{\mathrm{p}}}
\newcommand{\sms}{\sM_{\mathrm{s}}}
\newcommand{\smf}{\sM_{\mathrm{f}}}
\newcommand{\lap}{\widehat{\ }}
\newcommand{\hh}{\hat{h}}
\newcommand{\tih}{\tilde{h}}
\newcommand{\slo}{\slone}
\newcommand{\slone}{\sL^1}
\newcommand{\ssi}{\sS^{\infty}}
\newcommand{\ssiT}{\ssi\zT}
\newcommand{\ssit}{\ssi\zt}
\newcommand{\ssil}{\ssi_{\mathrm{loc}}}
\newcommand{\bv}{\mathrm{BV}}
\newcommand{\bvl}{\bv_{\mathrm{loc}}}
\newcommand{\woT}{W^1_{\zT}}
\newcommand{\n}[1]{\abs{#1}}
\newcommand{\ns}[1]{\abs*{#1}}
\newcommand{\pimk}{\pi^{(m,m+k]}}
\newcommand{\pimkp}{\pi^{(m,m+k+1]}}
\newcommand{\ahnn}{\abs*{\tH_n}}
\begin{document}
\title[Convergence of Nonhomogeneous Hawkes Processes]{Convergence of 
  nonhomogeneous Hawkes processes and Feller random measures} 
\author{Tristan Pace}
\address{Tristan Pace, Department of Mathematics, The University of Texas at
  Austin}
\email{tpace4288@utexas.edu}

\author{Gordan {\v Z}itkovi{\' c}} \address{Gordan {\v Z}itkovi{\' c},
Department of Mathematics, The University of Texas at Austin}
\email{gordanz@math.utexas.edu}

  \begin{abstract} 
    We consider a sequence of Hawkes processes whose excitation measures
    may depend on the generation, and study its scaling limits in the
    near-unstable limiting regime. The limiting random measures,
    characterized via a nonlinear convolutional equation, form a family
    parameterized by a pair consisting of a locally finite measure and a
    geometrically infinitely divisible probability distribution on the
    positive real line. These measures can be interpreted as
    generalizations of the Feller diffusion and fractional Feller (CIR)
    processes, but also allow for a "driving noise" associated with general
    L\' evy-type operators of order at most $1$, including fractional
    derivatives of any order $\alpha>0$ (formally corresponding to possibly
    negative Hurst parameters). 
\end{abstract}

\keywords{Hawkes processes, functional central
  limit theorem, nearly unstable, convolutional Riccati
  equation, fractional CIR processes, geometric
infinite divisibility, Feller random measures} 

\thanks{
During the preparation of this work both authors were 
supported by the National Science Foundation 
under Grant DMS-2307729. Any opinions, findings and
conclusions or recommendations expressed in this material are
those of the authors and do not necessarily reflect the views of the
National Science Foundation (NSF).
}

\subjclass[2020]{60F17, 60G55, 60G57, 45D05}

\maketitle

\section{Introduction}

Hawkes point processes were introduced in
\cite{Haw71,Haw71a} as models for self-exciting
stochastic phenomena. Their fundamental property is that new points
are generated at a rate that depends on the number and locations of
existing points via a function known as the excitation intensity.
Initially used as models for seismic events, Hawkes processes have
since found numerous applications in various disciplines ranging from
epidemiology and criminology, to genetics, neuroscience, 
economics and finance (see the survey \cite{LauLeePolTai24} and its
references).

\subsection{Limiting theory of Hawkes processes - an overview of the
literature}
The investigation into the limiting theory of Hawkes processes began
almost immediately after their introduction.  A central limit theorem
(as $t\to\infty$) for Hawkes processes whose kernels admit a finite
first moment was already established in \cite{HawOak74} (this paper
also introduced the cluster representation we use in the current paper). We
start with a brief survey of existing pertinent results split
into two classes, based on the scaling regime.

In the first class, the total mass $a=\int_0^{\infty} \phi(t)\, dt$ of
the kernel $\phi$ is kept constant, while time, space and other
parameters are scaled. One of the earliest results here was provided by
\cite{BacDelHof13a}, where a functional central limit theorem (FCLT)
with convergence to a scaled Brownian motion was established
under a finiteness assumption on the $1/2$-th moment of the kernel.
Later, \cite{GaoZhu18a} introduced a framework where the background
(immigrant) intensity is taken to infinity, but only space is scaled
to compensate. Under the assumption that the kernel is exponential, it
is shown there that the limiting process is no longer Brownian but
only Gaussian with a non-Markovian covariance function. More recently,
a FCLT for marked Hawkes processes and associated shot noises was
established in \cite{HorXu21}. We also mention a recent preprint
\cite{HorXu24} by the same authors where FCLTs or parallel negative
results are established for Hawkes processes in several stability
regions defined by the values of the total mass and the first moment
of the kernel $\phi$.

The second class of results features the "nearly unstable" scaling regime,
introduced in \cite{JaiRos15}, which is also utilized in the present work.
In this regime, both time and space are scaled in a non-Brownian manner,
while the total mass of the kernel is sent to $1$, the stability threshold
constant. Assuming that the kernel has a finite first moment, these authors
establish a functional scaling limit theorem for the integrated intensity
process with the Feller (CIR) diffusion, a non-Gaussian process, as the
limit. In the follow-up paper \cite{JaiRos16}, the requirement for a finite
first moment is relaxed to the finiteness of some moment above $1/2$, and
the limiting Feller diffusion is replaced by a fractional Feller (CIR)
process, which is neither Markovian nor a semimartingale. The mode of
convergence obtained in \cite{JaiRos16} was strengthened in
\cite{HorXuZha23} under the same assumptions on the kernel. These authors
show that the intensity processes themselves converge in the Skorokhod
topology, and not only their integrals, as in \cite{JaiRos16}.

\subsection{Our contributions} The goal of this paper is to add to the
existing literature by extending the aforementioned results in several
directions. Firstly, we consider  nonhomogeneous Hawkes processes, i.e.,
generalizations of Hawkes processes where the kernel is allowed to vary
from generation to generation (see \cite{FieLeiMol15} or \cite{MehZhu25} 
for a related model).
This not only provides additional modeling flexibility, but also unlocks a
wider range of possible limiting objects. Additionally, we allow the
kernels themselves to serve as scaling parameters in that they may depend
on the scaling parameter $n$. Compared to the existing results, these two
extensions can be interpreted as a transition from scaled sums of
i.i.d.~sequences to sums of (triangular) arrays of independent random
variables in classical probability theory. Continuing this analogy, the
class of our limiting objects now includes not only the analogues of stable
distributions (fractional Feller (CIR) processes), but also the analogues
of infinitely divisible distributions (termed Feller random measures in
this paper). 

Another direction in which we broaden all existing results is that we do
not impose any conditions on the integrability of the kernels; we do not
even require them to be functions in $\slone\pos$ but permit them to be
general finite measures on $\pos$. This allows us, in particular, to expand
the analysis of \cite{JaiRos15, JaiRos16} down to and below the critical
$1/2$-moment threshold imposed in the existing literature. In this regime,
the limiting objects are no longer necessarily (integrals of) stochastic
processes; they can now be located only in the space of locally finite random
measures. Hence, we cannot talk about convergence in Skorokhod's $J_1$, or
any related topology, but instead work with the vague topology this space
is naturally endowed with. On the other hand, we show that the limiting
random measures are almost surely atom-free; vague convergence in that case
implies weak convergence of cumulative distribution processes in the
topology of locally uniform (and therefore locally $J_1$) convergence.

\smallskip

The level of generality of our framework and the appearance of limiting
objects beyond the class of stochastic processes render the tools of
stochastic analysis and martingale theory, standard in the Hawkes-process
literature, less effective. This is further exacerbated by the
nonhomogeneity of our model; the generation-dependence of the excitation
kernel makes it challenging to express the conditional intensity process in
a convenient form without sacrificing finite dimensionality. Consequently,
we are led to the cluster representation of the Hawkes process (similar to
\cite{ElERos19}) and the related cascade of relationships among the Laplace
functionals associated with a sequence of auxiliary point processes. Here,
the process indexed by $m$ represents the progeny of an individual of
generation $m-1$. The crux of the argument then rests on obtaining tight
joint estimates for processes of different indices.
This results in a
convergence theorem that provides scaling constants, gives sufficient
conditions under which the scaling limit exists, and characterizes its
Laplace functional in terms of the unique solution to a nonlinear
convolutional Riccati equation. 
The central condition on the array of excitation kernels takes the flavor
of classical convergence of triangular arrays and is given in two equivalent
formulations. Moreover, three alternative natural and easily verifiable 
conditions for the main theorem to hold are provided and discussed.

We also show that all possible limits of sequences of scaled 
Hawkes processes in our setting are 
completely described by two measure-valued parameters: a locally finite
measure $\mu$ and a 
probability measure $\rho$ on $\pos$. While $\mu$ can be any locally finite
measure on $\pos$, the class of possible $\rho$ turns out to 
coincide with the 
class of \emph{geometrically infinitely divisible distributions}. Such
distributions admit a theory somewhat parallel to that of 
infinitely divisible distributions (see, e.g., \cite{KorKru93}) and 
the appearance of a particular $\rho$ in our limit can be directly related
to the characteristics of the array of the excitation measures, especially
in the cases of \emph{geometrically stable} distributions, i.e.,
distributions in the Mittag-Leffler family. 

The name \emph{Feller random measures} is chosen for our limiting objects
because, as shown in \cite{JaiRos15} and \cite{JaiRos16a},  their densities
are given by a Feller diffusion when $\rho$ is the exponential distribution
and a rough (fractional) Feller process when $\rho$ is the Mittag-Leffler
distribution with index above $1/2$. The third section of the paper is devoted 
to studying the distributional properties of Feller random measures, such as
moments, infinite divisibility and (non) existence of densities in 
special cases.

\subsection{Connections with fractional Brownian motion and rough
volatility models} One of the motivations for this work comes from the role
Hawkes processes have in financial modeling. Their self-exciting nature is
particularly well-suited for capturing the dependence of market buy and
sell orders on past orders (see, e.g.,  \cite{BacMasMuz15} for an
overview). A phenomenon well-explained by such models is the observed
"roughness" (see \cite{ComRen98} and \cite{GatJaiRos18}) of market
volatility. Indeed, the fractional Feller (CIR) process that appears in the
results of \cite{JaiRos16} - and corresponds to the squared volatility -
can be informally thought of as a continuous stochastic process "driven" by
a fractional Brownian motion (fBM). The value of the Hurst parameter $H \in
(0,1)$ of this fBM, used to describe the degree of "roughness" of the
volatility process, has been the subject of several empirical studies.
Early estimates gave $H\in(1/2,1)$ (\cite{ComRen98}) whereas two decades
later the consensus shifted towards $H\in(0,1/2)$ (see \cite{GatJaiRos18},
\cite{BenLunPak21} and \cite{FukTakWes22}). Many of the later estimates
put $H$ very close to $0$, suggesting that $H=0$ might be the "true" value
(see \cite{ForFukGerSmi22}, \cite{BayHarPig21}).

Even though there is no universally accepted way to define a fractional
Brownian motion with $H=0$ either as a stochastic process or as a random
measure/field, several authors have proposed models that could play such a
role in one sense or another. These include the multifractal random walks
(see \cite{BacDelMuz01}) and various Gaussian random fields with a
logarithmic kernel (see \cite{FyoKhoSim16}, \cite{NeuRos18}, and
\cite{HagNeu22} for a sample of different approaches). Our framework allows
us not only to define generalized fractional Feller (CIR) processes
corresponding to values of the Hurst parameter $H$ in the interval
$(-1/2,1/2]$, but also to encompass a much wider range of driving noises
beyond the one-dimensional fractional family. Moreover, we only require a
single passage to the limit, and do not define a limiting process for $H>0$
first, and then pass it to a (second) limit $H\to 0$, as is often done in
the literature mentioned above. While a full analysis is left for future
research, and it is difficult to give a formal definition of the notion of
a driving noise for random measures, we do note that the form of the
covariance kernel that we obtain in subsection \ref{sss:cov} below suggests
the log-correlated class (see the survey \cite{DupRhoSheVar17}) in the case
$H=0$. 

\subsection{Organization of the paper} Following this introduction, Section 2
introduces the necessary background and notation, and states the main theorem
together with several weaker sufficient conditions for its validity. Section 3
is devoted to the study of various properties of the limiting Feller random
measures. All proofs are deferred to the appendix, which is organized into four
subsections. 
Subsection
\ref{sec:proof-asm} provides proofs related to alternative conditions for the main theorem, Subsection \ref{sec:Riccati} develops results on convolutional Riccati
equations, and Subsection \ref{sec:aux-Hawkes} contains additional 
estimates related to Hawkes processes. Finally, Section \ref{sec:proof-main} combines these
ingredients to establish the main theorem through several auxiliary results.

\subsection{Notation and conventions}
\label{ssc:notation}

Let $I$ be either  $\pos$, $(0,\infty)$, or $[0,T]$ for some $T \geq 0$.
$\ssi(I)$ denotes  the family of all Borel measurable functions on $I$ such
that $\n{f}_{\ssi(I)}:= \sup_{t\in I} \abs{f(t)} < \infty$, while
$\ssil\pos$ consists of all functions $f:\pos \to \R$ such that $f|_{[0,T]}
\in \ssi[0,T]$ for each  $T\geq 0$. 

$\sB(I)$ denotes the set of Borel subsets of $I$, and $\sM(I)$ denotes the
set of all positive Borel measures on $I$. The measurable structure on
$\sM(I)$ is generated by the evaluation maps $\mu \mapsto \mu(A)$, with $A
\in \sB(I)$. The sets of finite measures, measures that are finite on
bounded sets, and probability measures on $I$ are denoted by $\smf(I)$,
$\smlf(I)$, and $\smp(I)$, respectively. The total mass of
$\mu\in\sM_{\cdot}(I)$ is denoted by $\ns{\mu}$.

For $\mu \in \smlf\pos$, $\hmu$ denotes its Laplace transform $\hmu(\ld) =
\int_{\pos} e^{-\ld t}\, \mu(dt)$. The convolution of $f\in \ssil\pos$  and
$\mu\in \smlf\pos$, defined by $ \int_{[0,\cdot]} f(\cdot-s)\, \mu(ds)$, is
denoted by $f*\mu$.

We recall that the \define{vague topology} on $\smlf(I)$, with convergence
denoted by $\tol{v}$,  is the coarsest topology on $\smlf(I)$ such that the
map $\mu \mapsto \int f\, d\mu$ is continuous  for each continuous function
$f : I \to \R$ with compact support. The \define{weak topology} on the
space $\smf\pos$, with convergence denoted by $\tol{w}$,  is defined
similarly, but with the requirement of a compact support relaxed to
boundedness of the test functions $f$. 

Random elements taking values in $\sM(I)$ are called \define{random
measures}, and random measures with values in $\smlf(I)$ are said to be
\define{locally finite}.
% Since $\pos$ is complete and separable, the vague topology on $\smlf$ is
% Polish (completely metrizable and separable).
For a sequence of locally finite random measures $\seq{\xi_n}$, we say that
$\xi_n$ \define{converges to $\xi$ in distribution}, and write $\xi_n
\Rightarrow \xi$,  if $\xi_n$ converges weakly to $\xi$ when interpreted as
a sequence of random elements in $\smlf(I)$ equipped with the vague
topology. We refer the reader to \cite[Section 4.1, p.~111]{Kal17} for a
textbook treatment and proofs of properties of the vague and weak
topologies used in the paper.

$\ceil{\cdot}$ denotes the ceiling function, i.e., $\ceil{x}$ is the
smallest integer greater than or equal to $x$.

\section{A convergence theorem for nonhomogeneous Hawkes processes}
\label{sec:main} Before we present our main result, we introduce the
notation and adopt one of several similar not entirely equivalent
frameworks for Hawkes processes found in the literature. For background
information  on random measures and point processes, we refer the reader to
standard texts such as \cite{DalVer03,DalVer08} or \cite{Kal17}.

\subsection{Hawkes processes}
A locally finite random measure $N$ on $\pos$ is called a
\define{point process} if
$N(A) \in \N_0$ for every bounded measurable set $A \subseteq \pos$.
Each point process $N$ admits a
sequence $\seqk{T_k}$ of $[0,\infty]$-valued random variables,
called the \define{points of $N$}, such that $T_0\leq T_1\leq \dots$ and
$T_k\to\infty$, a.s., and
\begin{align*}
  N =  \tsum_{k} \dl_{T_k},
\end{align*}
where $\delta_t$ denotes the Dirac measure at $t$, and
the sum is taken only over $k$ with $T_k<\infty$;
equivalently,
$\dl_{+\infty}$ is identified with the zero measure on $\pos$.
Since $\int f(t)\, N(dt) = \tsum_k f(T_k)$, a.s., whenever
both sides are well defined,
we frequently use the shorthand
\begin{align*}
  \tsum_{T \in N} f(T) := \int f(t)\, N(dt).
\end{align*}

Recall that for $\mu \in \smlf\pos$, the  \define{Poisson process with
intensity measure $\mu$} is the unique point process $P$ such that \ 1)
$P(A)$ is Poisson distributed with mean $\mu(A)$ for each bounded 
$A\in \sB(\pos)$, and \  2) $P(A_1)$, \dots, $P(A_n)$ are independent
random variables whenever $A_1,\dots, A_n$ are measurable, bounded and
disjoint.

\smallskip

The definition of a standard Hawkes process involves two components: the
background arrival rate and the excitation function. It will be convenient
for our later analysis to separate the two and first construct a class of
processes without any background intensity, but started, instead, from a
single point (progenitor) at time $t=0$. In our nonhomogeneous setting, its
distribution is determined by two parameters: a constant $a\in (0,1)$ and a
sequence $\pi=\seqm{\upm{\pi}}$ of \emph{excitation profiles}, i.e.,
probability measures on $(0,\infty)$. Together, they form the sequence
$\seqm{ a \upm{\pi}}$ of \emph{excitation measures}. To connect this
formulation with the standard notation, observe that when $\pi^m$ is
absolutely continuous, we can define the excitation function $\phi^m$
(associated with the rate at which the points in generation $m-1$ produce
offspring in generation $m$) by $a \pi^m(dt) = \phi^m(t)\, dt$.

More precisely, the \define{nonhomogeneous single-progenitor Hawkes process
$\tH$ with parameters $a$ and $\pi=\seqm{\upm{\pi}}$} is defined by
\begin{align}
\label{equ:rec-Hawkes-union}
\tH:=
\cup_{m\in\N_0} \utH{m},
\end{align}
where the sequence $\seqmz{\tH^{m}}$ of "generations" is built
from a double sequence $\upm{\tP}(k)$, $m\in\N$, $k\in\N_0$
of independent Poisson processes, where $\upm{\tP}(k)$ has intensity
$\upm{a \pi}$, for each $k\in\N_0$.
The zero-th generation $\tH^{0}$
is simply the Dirac mass $\dl_0$ at 0, i.e., a deterministic point process
with a single point at $0$, representing the lone progenitor.
Once the first $m$ generations $\utH{0} , \dots , \utH{m-1}$,
$m\in\N$,  have
been constructed, we set
\begin{align}
\label{equ:rec-Hawkes}
\utH{m} := \bigcup_{k\in \N_0}
\sets{\up{T}{m-1}(k)+S}{ S\in \utP{m-1}(k)} 
\end{align}
where $\seqkz{ \up{T}{m-1}(k)}$ denotes the
point sequence of $\utH{m-1}$.
In keeping with the convention introduced above, the first union
is taken over $k$ such that $\up{T}{m-1}(k)<\infty$.

In the sequel,
we often identify a point process with its (random) point set.
Moreover, we abuse the notation and write, for example,
$\utP{m-1}(T)$ for the
Poisson process $\utP{m-1}(k)$
whose index $k$ is such that $T=T^{m-1}(k)$.
This way, \eqref{equ:rec-Hawkes} takes a more legible
form
\[
\utH{m} =
\bigcup_{T\in \utH{m-1}} \prn{T+\utP{m-1}(T)}.
\]

The Hawkes process is defined as a superposition of independent
single-progenitor Hawkes processes, started at points of an
underlying Poisson process. More precisely, in addition to the
parameters $a$ and $\pi=\seqm{\upi{m}}$ of a single-progenitor process,
let a \emph{background intensity} measure $\mu\in\smlf\pos$
be given. The \define{Hawkes process with
parameters $\mu$, $a$ and $\pi$} is defined by
\begin{align}
\label{equ:Hawkes}
H := \bigcup_{T\in P} \prn{ T+\tH(T) },
\end{align}
where $P$ is a Poisson process with intensity $\mu$ and $\seqj{\tH(j)}$ is a
sequence of independent single-progenitor Hawkes processes with parameters
$(a,\seqm{\upi{m}})$,
independent of $P$. Thanks to the local finiteness of the Poisson process,
and the finiteness of single-progenitor Hawkes
processes (guaranteed by the assumption that $a<1$), $H$ is also a locally
finite random measure.

\subsection{A convergence theorem}

To describe the setting of our theorem, we consider a sequence $\seq{(a_n,
\seqm{\pi^m_n}, \mu_n)}$ of parameter triplets of nonhomogeneous Hawkes
processes. In particular, for each $n\in\N$, $a_n\in (0,1)$, $\mu_n$ is a
locally bounded measure on $\pos$, and $\seqm{\pi^m_n}$ is a sequence of
probability measures on $(0,\infty)$. For each $n$, we denote a Hawkes process
with parameters $(a_n, \seqm{\pi^m_n}, \mu_n)$ by $H_n$, and the associated
single-progenitor process by $\tH_n$. An important role will be played by the
sequence $\seq{\rho_n}$ of probability measures given by
\begin{align}
  \label{equ:rho-n-def} \rho_n := (1-a_n) \Big( \delta_0 + \sum_{k=1}^{\infty}
  (a_n)^k ( \pi^1_n * \dots * \pi^k_n) \Big) \in\smp\pos \efor n \in \N.
\end{align} 
The measure  
$\rho_n$ is the mean measure of $(1-a_n) \tH_n$, i.e., 
$\rho_n[B]/(1-a_n)$ is the 
expected number of points of $\tH_n$ in $B$
for any $B \in \sB\pos$.
% Our first goal is to give sufficient conditions on the sequences $\seq{a_n}$,
% $\seqmn{\pi^m_n}$ and $\seq{\mu_n}$ that will ensure that, when properly
% scaled, the processes $H_n$ converge, and to characterize the distribution of
% the limit.

For an infinitely divisible $\nu \in \smp\pos$
and $\tau > 0$, $\nu^\tau$ denotes the $\tau$-th convolutional 
power of $\nu$, i.e., 
the probability measure on $\pos$ 
characterized by $\widehat{\nu^\tau} = (\widehat{\nu})^\tau$.  
We introduce the shorthand
\begin{align}
  \label{catdef}
  \cat = \ceil{\tau (1-a_n)^{-1}}
\end{align}
and say that the array $\arr{\pi^m_n}$ is \define{locally infinitesimal} if
\begin{align}
\label{locinf}
\lim_{n \to \infty} \sup_{m \leq \cat} \pi^m_n[\dl,\infty)
= 0 \eforall \tau, \dl >0.
\end{align}

Assumptions \ref{A} and \ref{A'} below, shown to be equivalent in Proposition
\ref{suine}, provide sufficient structure on the array $\arr{\pi^m_n}$ for
our main theorem to hold. 
\begin{assumption}{A}[Aggregation at scale $(1-a_n)^{-1}$] \label{A}
  There exists an infinitely divisible probability measure $\nu$ on
  $\pos$ with $\nu(\set{0})=0$ such that
\begin{align}
  \label{equ:homog}
\pi^1_n * \dots * \pi^{ \cat}_n
\tol{w} \nu^{\tau}
  \text{ as $n\to\infty$ for all $\tau >0$.}
\end{align}
\end{assumption}
\begin{remark}
\label{lenad}
Note that Assumption \ref{A} can be  reformulated using a 
row-wise independent array $\arr{X^m_n}$ of random variables 
such that $X^m_n \sim \pi^m_n$ as follows:
\begin{align}
  \label{SnL}
S_n (\tau) :=
\sum_{m=1}^{\cat} X^m_n \Rightarrow L_{\tau} \eforall \tau>0, 
\end{align}
where 
$L$ is a L\' evy subordinator with $L_1 \sim \nu$.
\end{remark}
\begin{assumption}{A'}\label{A'}
There exists 
a measure $\eta$ on $(0,\infty)$ with 
$\int_{(0,\infty)} \min(1, t)\, \eta(dt)<\infty$, 
a constant $t_0 >0$ with
$\eta(\set{t_0})=0$, and a constant $d \geq 0$ such that
\begin{enumerate}
\item $\arr{\pi^m_n}$ is locally infinitesimal,
\item $\tfrac{1}{\tau} \sum_{m=1}^{\cat} \int_0^{t_0} t\, \pi^m_n(dt) \to d +
    \int_{(0,t_0]} t\, \eta(dt)$ for all $\tau >0$, 
  \item $\tfrac{1}{\tau} \sum_{m=1}^{\cat} \pi^m_n \tol{v} \eta$ on
    $(0,\infty)$ for all
    $\tau>0$, and
\item either $d>0$ or $\eta(0,\infty) = \infty$.
\end{enumerate}
The infinitely divisible
distribution with the Laplace exponent 
$\ld \mapsto \ld d + \int_0^{\infty} (1-e^{-\ld t})\, \eta(dt)$ 
is then denoted by $\nu$. 
\end{assumption}

\begin{theorem}
\label{thm:main}
Let $\seq{H_n}$ be a sequence of Hawkes processes with parameter 
triplets
$\{ (a_n, \seqm{\pi^m_n},$ $\mu_n)\}_{n\in\N}$ with $a_n \upto 1$,  such that
(equivalent) Assumptions \ref{A} and $\ref{A'}$ hold and
 $(1-a_n) \mu_n \tol{v} \mu$ for some $\mu \in \smlf$.

Then there exists a locally finite random measure $\xi$ on $\pos$
such that
\begin{align}
\label{equ:conv-to-xi}
(1-a_n)^2  H_n \Rightarrow \xi.
\end{align}
Moreover, 
$\rho_n \tol{w} \rho$, where $\hrho = (1-\log (\hnu))^{-1}$ with $\nu$ 
as in Assumption \ref{A}, and 
the law of $\xi$ is characterized by 
\begin{align}
\label{equ:char-xi}
% \mgf_{\xi}[f] =
\bee{ \Exp{ f*\xi}} =
\Exp{  h[f] * \mu}  \eforall f \in C\pos
\ewith f(0) =0 \eand \abs{f} \leq 1/2,
\end{align}
where $h[f]$ denotes the unique locally bounded
solution of the convolutional Riccati equation
\begin{align}
\label{equ:h-eq}
h = (f+\tot h^2)*\rho.
\end{align}
\end{theorem}

While the proof of Theorem \ref{thm:main} is deferred to Appendix
\ref{sec:proof-main}, we comment on its assumptions, scope, and repercussions in
the next three subsections.

\subsection{Equivalence of and sufficient conditions for Assumptions \ref{A} and \ref{A'}} \label{sec:A} 
The equivalence of Assumptions \ref{A} and \ref{A'} follows essentially 
from the classical limit theorems for triangular arrays 
going back to Kolmogorov and his co-authors. As with 
all other proofs in this section, the 
remaining details and references that complete the proof of Proposition \ref{suine} are deferred to  
Appendix \ref{sec:proof-asm}. 
\begin{proposition}
\label{suine}
Assumptions \ref{A} and \ref{A'} are equivalent.
\end{proposition}
In view of this equivalence, we focus 
solely on Assumption \ref{A} in the sequel and 
compare it  with two natural weaker conditions, formulated as 
Assumptions \ref{B1} and \ref{B2}, below. The first one 
is stated in terms of the 
mean measures $\seq{\rho_n}$ and not directly in terms of the excitation 
profiles $\arr{\pi^m_n}$. It is  implied by Assumption \ref{A} (via Theorem
\ref{thm:main}) and is
clearly necessary for Theorem \ref{thm:main} to hold.  
\begin{assumption}{B1}[Convergence of mean measures]
  \label{B1}
  There exists $\rho \in \smp\pos$ such that $\rho(\set{0})=0$ and $\rho_n \tol{w} \rho$.
\end{assumption}
The convergence of the sequence $\seq{\rho_n}$ is equivalent to the
convergence in distribution of the random sum
$\sum_{m=1}^{G(1-a_n)} X^m_n$, where $\arr{X^m_n}$ is as in Remark
\ref{lenad} and  
$G(p)$ is an independent geometrically distributed random variable
with parameter $p$. 

\smallskip

The second, weaker, condition is Assumption \ref{A} specialized to $\tau=1$.  
\begin{assumption}{B2}[Central Limit Theorem]
  \label{B2}
  There exists a probability measure $\nu$ on $\pos$ such that $\nu(\set{0})=0$ and 
  \begin{align*}
    \pi_n^1 * \dots * \pi_n^{\ceil{1/(1-a_n)}} \tol{w} \nu.
  \end{align*}
\end{assumption}

It is clear that Assumption \ref{B2} does not imply Assumption \ref{A}.
As the following simple example demonstrates, 
neither does Assumption \ref{B1}, even under the additional assumption 
of infinitesimality. 
\begin{example}
\label{beeth}
Let $a_n = 1 - 1/n$, $p_n = n^{-1/2}$, $q_n = 1- p_n$,  
$m_n = \ceil{\sqrt{n}}$ and define
  \begin{align*}
  \pi^m_n = 
  \begin{cases}
    p_n \delta_1 +  q_n \delta_0, & n - \sqrt{n}  < m \leq n  \text{
    and $n$ is even,} \\
    p_n \delta_1 + q_n \delta_0, & n  < m
    \leq n + \sqrt{n}  \text{ and $n$ is odd,} \\
    \delta_0, & \text{otherwise.}
\end{cases}
  \end{align*}
  Since $p_n \to 0$, $(\pi^n_m)$ is an infinitesimal array. 
  The convolution $\pi^1_n*\dots *\pi_n^{n}$ equals $\delta_0$ 
  when $n$ is odd, and corresponds to the binomial
  distribution with $1/p_n + o(1)$ trials and success probability $p_n$
  when $n$ is even. 
  Consequently, it does not converge
  weakly as it approaches 
  a Poisson distribution with parameter $1$ 
  along even indices, and $\delta_0$ along odd ones.

  On the other hand, it is not difficult to see that $\rho_n \tol{w} \rho$,
  where $\rho$ is a mixture of $\delta_0$ and the Poisson distribution 
  with parameter $1$, with weights $1-e^{-1}$ and $e^{-1}$. 
  The additional requirement $\rho(\set{0})=0$ can be met by
  redefining $\pi^m_n$ from $\delta_0$ to  $\delta_{p_n}$ for $m  \leq
  \sqrt{n}$, which will simply shift all distributions above by $1$. 

\end{example}

To bridge the gap between Assumptions \ref{B1} and \ref{B2} and Assumption
\ref{A}, we introduce another structural assumption requiring
approximate periodicity 
with period $d_n = o((1-a_n)^{-1})$, with an average error of size
$o(1-a_n)$. Before we state it, 
we recall that the \define{Fortet-Mourier metric} $d_{FM}$ on $\smp\pos$ is given by
\begin{align}
\label{coved}
  d_{FM}(\nu_1, \nu_2) = \sup_f \abs{ \int f \,d\nu_1  - \int f\, d\nu_2}
\end{align}
where the supremum is taken over all $[-1,1]$-valued
$1$-Lipschitz 
functions on $\pos$. 
% By Kantorovich-Rubinstein duality 
% (see \cite[Chapter 5, p.~51]{Vil09})
% this metric metrizes the weak convergence of
% probability measures on $\pos$ and is uniformly equivalent to the 
% \define{truncated $1$-Wasserstein} metric 
% \begin{align*}
%   d_{\Wtr }(\nu_1, \nu_2) = \inf_{N} \int (1 \wedge \abs{t-s})\,  \kappa(dt, ds), 
% \end{align*}
% where the infimum is taken over all $\kappa \in \smp(\pos^2)$
% with marginals $\nu_1$ and $\nu_2$.

\begin{assumption}{C}[Approximate periodicity]
  \label{C}
  The array $\arr{\pi^m_n}$ is locally infinitesimal and 
there exists a sequence $\seq{d_n}$ in $\N$ such that $(1-a_n) d_n \to 0$ and
\begin{align*}
  \lim_{n \to \infty} \sum_{m=1}^{\cat} d_{FM}(\pi^{m}_n, \pi_n^{m \tmod{d_n}
  })=0 \eforall \tau>0, 
\end{align*}
where $m \tmod{d_n}$ denotes the remainder, taken in $\set{1,\dots, d_n}$,
after division by $d_n$. 
\end{assumption}

We have the following implications between the assumptions above:
\begin{proposition} \ 
\label{Todea}
      Either Assumption \ref{B1} or Assumption \ref{B2}, together 
      with Assumption \ref{C}, implies Assumption
      \ref{A}. 
      Moreover, the probability measure $\rho$ of Assumption \ref{B1}
      and $\nu$ of Assumption \ref{A} are related by 
      $\hrho=(1-\log(\hnu))^{-1}$. 
\end{proposition}

\section{Properties of Feller random measures}
\label{sec:Feller}
Measures $\mu\in \smlf\pos$ and 
$\rho \in \smp\pos$ with $\rho(\set{0}) = 0$ 
that appear in Theorem \ref{thm:main}
uniquely define
a locally finite random measure $\xi$ via 
\eqref{equ:char-xi}
and \eqref{equ:h-eq}.
We call such $\xi$ the \define{Feller random measure} with
parameters $\mu$ and $\rho$, and denote this by $\xi \sim F(\mu,\rho)$.
When it exists,
a nonnegative measurable process $\{Y_t\}_{t\geq 0}$, such that
$\xi[A] = \int_A Y_t\, dt \eforall A \subseteq \sB\pos$ a.s.~
is called the \define{density} of $\xi$.
\begin{remark}
\label{rem:carls}
It has been shown in
\cite{JaiRos15} and \cite{JaiRos16} that, when $\rho$ is the
Mittag-Leffler distribution with index $\alpha>1/2$ and $\mu$ is the
Lebesgue measure on $\pos$, the
Feller random measure $\xi$
admits a density $Y$
which has the same distribution as a solution to a
Volterra-type stochastic differential
equation of the form
\begin{align}
\label{equ:unmet}
Y_t = Y_0 + c_1\int_0^t (t-s)^{\al-1} (\theta - Y_s)\, ds + c_2
\int_0^t (t-s)^{\al-1} \sqrt{Y_s}\, dB_s,
\end{align}
where $c_1, c_2$ and $\theta$ are constants and $B$ is a Brownian motion.
The form of \eqref{equ:unmet} explains why $Y$ is called the
fractional (or rough) CIR (or Feller) process in the literature, and
why we adopt
the name Feller random measure for the general case. In addition to
\cite{JaiRos15, JaiRos16}, we refer the reader to \cite{ElERos19}
for further information on fractional CIR process and to
\cite{AbiLarPul19} for a treatment of more general stochastic
differential equations of the Volterra type.
\end{remark}

\subsection{Attainable limiting distributions \texorpdfstring{$\rho$}{rho}}
As we have seen above, a Feller random measure $F(\mu,\rho)$ can be 
constructed from measures
$\mu \in \smlf\pos$ and $\rho \in \smp\pos$, as long as they arise as limits in
Theorem \ref{thm:main}. While this clearly imposes no restriction on $\mu$, the
situation for $\rho \in \smp\pos$ is more subtle.
\begin{definition}
\label{def:gid}
A random variable $X$ is said to be
\define{geometrically infinitely divisible (GID)} if
for each $p\in (0,1)$ there exists an i.i.d.~sequence $\seqm{X_m(p)}$ of
random variables such that
\[ X \eqd \sum_{m=1}^{G(p)} X_m(p),\] where $G(p)$ is
an $\N_0$-valued geometrically distributed random variable with
parameter (probability of success) $p$, independent of
the sequence $\seqm{X_m(p)}$.
\end{definition}
This notion was introduced in \cite{KleManMel84} as  part of the
answer to the following question posed by V.~M.~Zolotarev:
characterize the family $\sY$ of distributions of random
variables $Y$ such that,
for any $p\in (0,1)$, there exists a random variable $X(p)$ such that
\begin{align*}
Y \eqd X(p)  + B(p) Y
\end{align*}
where $Y$, $X(p)$ and $B(p)$ are independent and
$\pp{B(p)=1} = 1- \pp{B(p)=0} = p$. 
In the same paper, the authors show that
$\sY$ coincides with the set of all GID
distributions. Furthermore, they prove
that
a probability measure $\rho$ on $\pos$ is GID if and only if its
Laplace transform $\hrho$ has the form
\begin{align}
\label{equ:GID-Lap}
\hrho(\ld) = \frac{1}{1 - \log(\hnu(\ld))},
\end{align}
where $\hnu$ is the Laplace transform of some infinitely divisible
probability measure $\nu$ on $\pos$.

By the L\' evy-Khinchin representation, this is equivalent to
$\hrho$ admitting the following  form
\begin{align}
\label{equ:GLH}
\hat{\rho}(\ld) = \frac{1}{1+ d \ld + \int_0^{\infty} (1-e^{-\ld t})\,
\eta(dt)}\,,
\end{align}
for some
constant $d\geq 0$ and some measure $\eta$ on $(0,\infty)$ with
$\int_0^{\infty} \min(1,t) \, \eta(dt) < \infty$.

The very definition of infinite divisibility implies that for any sequence $\seq{a_n}$ with $a_n \upto 1$ and
any infinitely divisible probability measure $\nu$ on $\pos$ there exists a
row-constant array, i.e., an array
of the form $\pi^m_n = \pi_n$,  such that  Assumption \ref{B2} holds.
In this case, Assumption \ref{C} holds trivially, and  Theorem  
\ref{thm:main} yields the following result (for related work 
in the context of the \emph{theory of random summation} the reader is
referred to \cite{KorKru93}):
\begin{proposition}
\label{error}
Any probability measure $\rho$ that appears as the limit 
$\rho = \lim \rho_n$
in Theorem \ref{thm:main} is GID. Conversely, given any sequence
$\seq{a_n}$ with $a_n \upto 1$ and any GID probability $\rho$, there exists
a locally infinitesimal, row-constant array
$\arr{\pi^m_n} = \arr{\pi_n}$ of excitation profiles so that 
$\rho = \lim \rho_n$. 
\end{proposition}
Although general arrays $\arr{\pi^m_n}$ provide additional 
modeling flexibility, 
row-constant arrays, as stated in Proposition \ref{error},  
already generate all possible
limiting probabilities $\rho$. However, when the dependence on $n$
is further restricted, the class of attainable probability 
measures $\rho$ becomes more limited. In the case
when $\pi^m_n=\pi_n$ is given by a rescaled version of the same probability
distribution $\pi$ (as is the case, e.g., in \cite{JaiRos15, JaiRos16}),
the limiting distribution, up to scaling, belongs to a specific
one-parameter family, which in turn essentially determines the sequence 
$\seq{a_n}$.
\begin{definition}
\label{def:Solon}
A probability measure $\rho$ on $\pos$ is called the
\define{Mittag-Leffler} distribution with parameter
$\alpha \in (0,1]$ if its Laplace
transform $\hrho$ takes the form
\begin{align}
\label{equ:psi-ML}
\hrho(\ld) = \frac{1}{1+\ld^{\al}}.
\end{align}
\end{definition}

The Mittag-Leffler distribution admits an explicit density
\begin{align}
\label{equ:ML-dens}
p^{\al}(t) =  t^{\al-1} E_{\al,\al}(-t^{\al}) \efor t\geq 0,
\end{align}
where, for $\alpha,\beta>0$,  the
\define{Mittag-Leffler function} $E_{\al,\be}$ is given by
\begin{align*}
E_{\al,\be}(x) = \sum_{n=0}^{\infty} \frac{x^n}{\Gamma(\al n + \be)}
\efor x\geq 0,
\end{align*}
and $\Gamma$ denotes the Gamma-function.
In the special case $\al=1$ this distribution is exponential, with
rate $1$,
while for $\al=1/2$ its density takes an especially simple form.
Indeed, a straightforward, albeit somewhat tedious, derivation directly from
the definition
yields
\begin{align}
\label{equ:p-half}
p^{1/2}(t) &= \sqrt{\frac{2}{\pi} }
\prn{\Big. \sqrt{2t} - m\prn{ \oo{\sqrt{2t}}}},
\end{align}
where $m(x) =  \frac{1-\Phi(x)}{\vp(x)}$ is the ratio (known as the Mills
ratio) of the survival function $1-\Phi$
and the density $\vp$ of the standard normal distribution. 

For the convenience of the reader, we present here
a classical result (see \cite[Theorem 2.5.1, p.~35 and Theorem 2.5.2,
p.~37]{GneKor96}) reformulated to fit our setting:
\begin{proposition}
\label{pro:Digor}
Let $\pi$ be a probability measure on $\pos$ with $\pi(\set{0})=0$
and let the array $\arr{\pi^m_n}$ be given by
\begin{align}
\label{equ:pin-phi}
\pi^m_n (B) = \pi\prn{ n B } \eforall B\in \sB\pos.
\end{align}
Then $\seqmn{\pi^m_n}$ satisfies Assumption \ref{A}
if and only if one of the following two conditions is satisfied:
\begin{enumerate}
\item  \label{ite:exp-case}
  $\lim_{t\to\infty} \frac{\pi[t,\infty)}{\oo{t} \int_0^t \pi[s,\infty)\, ds}
  = 0$, or
\item \label{ite:other-case}
  there exists $\al\in (0,1)$ such that
  $\lim_{t\to\infty} \frac{\pi[t,\infty)}{\pi[c t,\infty)} = c^{\al}$ for all
  $c> 0$.
\end{enumerate}
In either case, if $\seq{a_n}$ is such that 
$a_n  =1-\kappa_n n^{-\al}$ for some $\seq{\kappa_n}$ with $
\kappa_n \to \kappa \in (0,\infty)$, then $\seq{\rho_n}$ converges to  
a (possibly scaled) Mittag-Leffler distribution with parameter $\al$
(where $\al=1$ in case \ref{ite:exp-case}).
\end{proposition}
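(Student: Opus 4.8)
The plan is to pass to Laplace transforms and reduce the statement to the theory of regular variation at $0$, i.e.\ to Karamata's Tauberian theorem. Since $\pi^m_n=\pi_n$ with $\pi_n(B)=\psi(nB)$, the array $\seqmn{\pi^m_n}$ is row-constant, so condition \itref{ite:unif-period} of Theorem \ref{thm:main} holds with $d=1$; it is automatically null, because $\pi_n([\eps,\infty))=\psi([n\eps,\infty))\to0$ for every $\eps>0$ ($\psi$ being finite), and $\pi_n(\set{0})=\psi(\set{0})=0$. Thus ``$\seqmn{\pi^m_n}$ satisfies \itref{ite:cond-rho}'' means precisely that some sequence $\seq{a_n}$ in $(0,1)$ with $a_n\to1$ can be chosen so that $\rho_n$ converges weakly to some $\rho\in\smp$ with $\rho(\set{0})=0$. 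Writing $\hat\psi$ for the Laplace transform of $\psi$, we have $\hat\pi_n(\ld)=\hat\psi(\ld/n)$, and since $\rho_n=\ee{\pi^{(0,G_n]}_n}$ is a geometric compound of iid copies of $\pi_n$ (see \eqref{equ:rhozero}),
\[
  \hat\rho_n(\ld)=\frac{1-a_n}{1-a_n\hat\psi(\ld/n)}=\frac{1}{1+\tfrac{a_n}{1-a_n}\Psi(\ld/n)},
\]
where $\Psi(u):=1-\hat\psi(u)=\int_0^\infty(1-e^{-ux})\,\psi(dx)$. Since any limit $\hat\rho$ is continuous with $\hat\rho(0+)=1$ and $\lim_{\ld\to\infty}\hat\rho(\ld)=\rho(\set{0})$, the convergence $\rho_n\tol{w}\rho$ with $\rho(\set{0})=0$ is equivalent to having $\tfrac{a_n}{1-a_n}\Psi(\ld/n)\to g(\ld):=\hat\rho(\ld)^{-1}-1$ for every $\ld>0$, where $g$ is finite, positive and continuous on $(0,\infty)$ with $g(0+)=0$ and $\lim_{\ld\to\infty}g(\ld)=\infty$.

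For the ``only if'' direction I would take $\ld=1$ to obtain $\tfrac{a_n}{1-a_n}\Psi(1/n)\to g(1)\in(0,\infty)$, hence $\Psi(\ld/n)/\Psi(1/n)\to g(\ld)/g(1)$ for all $\ld>0$. Because $\Psi$ is nondecreasing, comparing $\Psi(1/(n+1))$ with $\Psi(\ld/n)$ for $\ld$ slightly below $1$ and using continuity of $g$ gives $\Psi(1/(n+1))/\Psi(1/n)\to1$, and this squeeze upgrades the pointwise convergence along $1/n$ to $\Psi(\ld u)/\Psi(u)\to g(\ld)/g(1)$ as $u\downto0$; thus $\Psi$ is regularly varying at $0$ and $g(\ld)/g(1)=\ld^{\al}$ for some $\al\in(0,1]$ (here $\al\le1$ because $u\mapsto\Psi(u)/u$ is nonincreasing with $\Psi(0+)=0$, and $\al>0$ because $g$ is non-constant, $g(0+)=0<g(1)$). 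For the ``if'' direction, given that $\Psi$ is regularly varying of index $\al\in(0,1]$ at $0$, I would set $1-a_n:=\Psi(1/n)/c$ for a fixed $c>0$, so that $\tfrac{a_n}{1-a_n}\Psi(\ld/n)=a_nc\,\Psi(\ld/n)/\Psi(1/n)\to c\ld^{\al}$ and hence $\hat\rho_n(\ld)\to(1+c\ld^{\al})^{-1}$; by Definition \ref{def:Solon} this is the Laplace transform of a $c$-scaled Mittag-Leffler distribution of parameter $\al$, which has no atom at $0$, so the continuity theorem for Laplace transforms yields the weak convergence of $\rho_n$ to that distribution.

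It then remains to rephrase ``$\Psi$ regularly varying of index $\al\in(0,1]$ at $0$'' in terms of $\psi$ alone. Here I would use the identities $\Psi(u)=u\int_0^\infty e^{-uy}\psi([y,\infty))\,dy=u^2\int_0^\infty e^{-uy}U(y)\,dy$, with $U(t):=\int_0^t\psi([s,\infty))\,ds$, and apply Karamata's Tauberian theorem at $0$: for $\al\in(0,1)$, regular variation of $\Psi$ of index $\al$ is equivalent to $\psi([t,\infty))$ being regularly varying of index $-\al$ at $\infty$, i.e.\ condition \itref{ite:other-case}; for $\al=1$ it is equivalent to $U$ being slowly varying at $\infty$, and since $U'=\psi([\cdot,\infty))$ is monotone, the monotone density theorem identifies this with condition \itref{ite:exp-case}, namely $t\,\psi([t,\infty))/U(t)\to0$. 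Finally, in case \itref{ite:other-case}, writing $\psi([t,\infty))=t^{-\al}L(t)$ with $L$ slowly varying, the Tauberian estimate $\Psi(1/n)\sim\Gamma(1-\al)\,n^{-\al}L(n)$ shows that the admissible scaling $1-a_n\sim\Psi(1/n)/c$ is of the announced form $\kappa_n n^{-\al}$; inserting $a_n=1-\kappa_n n^{-\al}$ with $\kappa_n\to\kappa\in(0,\infty)$ into the displayed formula for $\hat\rho_n$ gives $\hat\rho_n(\ld)\to\bigl(1+\kappa^{-1}\Gamma(1-\al)\,(\lim_n L(n))\,\ld^{\al}\bigr)^{-1}$, again a scaled Mittag-Leffler Laplace transform, and case \itref{ite:exp-case} ($\al=1$) is the same with $U(n)$ in place of $\Gamma(1-\al)L(n)$.

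The step I expect to be the main obstacle is the Karamata--Tauberian translation at the boundary exponent $\al=1$: unlike the generic case $\al\in(0,1)$, relating regular variation of $\Psi$ at $0$ to a tail condition on $\psi$ must be routed through the integrated tail $U$ and the monotone density theorem, and one has to check carefully that condition \itref{ite:exp-case} is exactly ``$U$ slowly varying'' --- one implication by the monotone density theorem, the reverse by integrating the bound $U'(t)/U(t)=o(1/t)$ over intervals $[t,\ld t]$. A secondary point that must be written out is the interpolation lemma turning pointwise convergence of the ratios $\Psi(\ld/n)/\Psi(1/n)$ into genuine regular variation of $\Psi$ (conceptually routine, using only monotonicity of $\Psi$ and continuity of $g$), together with the verification that $(1+c\ld^{\al})^{-1}$ really is a probability Laplace transform so that the continuity theorem applies.
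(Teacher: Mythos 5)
The paper's own proof of Proposition \ref{pro:Digor} is a one-line citation to the Gnedenko--Korolev monograph on random summation (Theorems 2.5.1 and 2.5.2 of \cite{GneKor96}), with the behavior of the scaling sequence ``read off their proofs.'' Your proposal instead gives a self-contained derivation that is genuinely different in route: you pass to Laplace transforms, write $\hat\rho_n(\ld)=\bigl(1+\tfrac{a_n}{1-a_n}\Psi(\ld/n)\bigr)^{-1}$ with $\Psi=1-\hat\psi$, reduce the existence of an admissible scaling to regular variation of $\Psi$ at the origin, and then translate that into the two tail conditions on $\psi$ via Karamata's Tauberian theorem and the monotone density theorem. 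This is the same machinery that underlies the cited theorems, but you unpack it rather than delegating; the benefit is transparency (in particular, you make visible exactly why the boundary index $\al=1$ has to be routed through the integrated tail $U$ rather than through $\psi[t,\infty)$ directly). Your argument is correct as written: the identification of admissible scalings with regular variation (including the squeeze lemma upgrading convergence along $1/n$ to all $u\downto 0$, and the elementary argument forcing $\al\in(0,1]$ via monotonicity of $\Psi(u)/u$), the Karamata/monotone-density dictionary in both index regimes, and the computation of the limiting Laplace transform $(1+c\ld^{\al})^{-1}$ all check out. Two small remarks: the step you flag as delicate --- showing condition \itref{ite:exp-case} is precisely ``$U$ slowly varying'' --- is indeed correct in both directions by the argument you sketch (monotone density at index $0$ one way, integrating $U'/U=o(1/t)$ the other). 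And your observation that with $a_n=1-\kappa_n n^{-\al}$ one actually obtains the limit $\bigl(1+\kappa^{-1}\Gamma(1-\al)(\lim_n L(n))\ld^{\al}\bigr)^{-1}$ implicitly reveals that the final sentence of the proposition, as stated, tacitly assumes the slowly varying factor $L$ in the tail converges; this imprecision is in the proposition itself, not something your proof introduces, and your Laplace-transform route makes it visible in a way the paper's citation-only proof does not.
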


\begin{remark} \
\label{rem:crape}
\begin{enumerate}
\item In the case \ref{ite:exp-case} the limiting distribution
  $\rho$ is exponential and the condition is satisfied,
  in particular, if the probability
  measure $\pi$ admits a finite first moment $\int_0^{\infty} t\,
  \pi(dt)$ as in \cite{JaiRos15}.
\item The case \ref{ite:other-case} covers all $\pi$ such that
  $\pi[t,\infty)$ is a regularly varying function with a nontrivial tail,
  i.e.,
  \begin{align*}
    \pi[t,\infty) \sim l(t) t^{-\al} \text{ as } t\to\infty
  \end{align*}
  for some $\al\in (0,1)$ and some
  slowly varying (e.g.,~constant) function $l$.
\item
  The choice of $n$ as the scaling factor in \eqref{equ:pin-phi} is
  simply a convenient normalization and can be generalized easily.
\end{enumerate}
\end{remark}

\subsection{Distributional properties}
Proposition \ref{pro:agist} below provides a recursive scheme for
efficient computation of cumulants (and, therefore,  moments) of
integral functionals of the Feller random measure $\xi$.
It is based on an the Adomian decomposition of the convolutional
Riccati equation \eqref{equ:h-eq} which
is described in Appendix \ref{sec:aux-Hawkes},
together with several other
properties of solutions of convolutional Riccati equations used in the proof
of the main theorem.

\subsubsection{Cumulants}
We recall that a real sequence $\seq{\kappa_n[Y]}$ is called the
\define{sequence of cumulants} of (the distribution of) the random variable $Y$
if
\begin{align*}
\ee{\exp{ \eps Y}} = \exp{\textstyle\sum_{n\geq 1} \frac{\eps^n}{n!}
\kappa_n[Y]}
\end{align*}
for $\eps$ in some neighborhood of $0$. For a pair $(X,Y)$, we also
define the \define{partial cumulants} $\kappa_n[X,Y]$ by
\begin{align*}
\ee{\exp{ X+\eps Y}} = \exp{\textstyle\sum_{n\geq 0} \frac{\eps^n}{n!}\,
\kappa_n[X,Y] },
\end{align*}
provided the series converges for $\eps$ in some neighborhood of $0$.
As is well known, cumulants determine the moments of the 
distribution. 
Indeed, the two sequences
are related to one another via an explicit formula based on Fa\`a di
Bruno's formula and involving Bell polynomials (see \cite{Smi95}).

\smallskip 
The next result shows that cumulants of random variables of the form
$f*\xi$, where $\xi \sim F(\mu,\rho)$, satisfy a simple
recursion and admit explicit representation. Partial
cumulants can be expressed in terms of a solution of a system of
convolutional equations. The proof is provided in Appendix \ref{sec:Ado}. 
\begin{proposition} Let $\xi$ be a Feller random measure with
parameters $\mu$ and $\rho$
and let $f,f_0:\pos \to \R$ be continuous functions which vanish at $0$.
\label{pro:agist}
\begin{enumerate}
\item  \label{ite:cumulants}
  The cumulants $\kappa_n[ f*\xi]$ of $f*\xi$ are given by
  \[ \kappa_n[f*\xi] = n!\, K_n*\mu \efor n\geq 1, \]
  where the functions $K_n \in \ssil\pos$ are defined recursively by
  \begin{align}
    \label{equ:cocco}
    K_1= f*\rho , \ K_n = \tot \prn{ \tsum_{i=1}^{n-1} K_i K_{n-i}}*\rho
    \efor n\geq 2.
  \end{align}
\item \label{ite:partial}
  If $\sup f_0 < 1/2$, the partial cumulants $\kappa_n[f_0*\xi,
  f*\xi]$ are given by
  \[ \kappa_n[f_0*\xi, f*\xi] = n!\, K'_n*\mu \efor n\geq 1, \]
  where $\seqz{K'_n}$ is the unique solution in
  $(\ssil\pos)^{\N_0}$ of the system
  \begin{equation}
    \label{equ:pre-system}
    \begin{split}
      K'_0 &= h[f_0], \quad \\
      K'_1 &= f+ \prn{  K'_0 K'_1}*\rho, \eand \\
      K'_n &=  \tot \prn{ \tsum_{i=0}^{n}    K'_i  K'_{n-i}}*\rho,
      \efor n \geq 2.
    \end{split}
  \end{equation}
\end{enumerate}
\end{proposition}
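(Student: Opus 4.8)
The plan is to introduce a scalar generating parameter $\eps$ and to read off the (partial) cumulants from the Taylor expansion in $\eps$ of the solution $h$ of the convolutional Riccati equation \eqref{equ:V}, fed through the characterization \eqref{equ:L} of $\mgf_\xi$. A preliminary reduction is convenient: for each fixed $t$ the functional $\mgf_\xi[g](t)$ depends only on $g|_{[0,t]}$, and the restriction to $[0,T]$ of the solution of \eqref{equ:V} depends only on its forcing term on $[0,T]$ (a causality property that is part of, or provable just as, Proposition \ref{pro:tangi}). Hence I would first replace $f$ by its two-sided truncation $\max(\min(f,M),-M)$ with $M=\n{f}_{\ssiT}$; this changes nothing on $[0,T]$, keeps $f\in\czz$, and makes $f$ bounded, so that \eqref{equ:L} applies to $\eps f$ (respectively to $f_0+\eps f$) as soon as $\abs{\eps}$ is small.

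For part \itref{ite:cumulants}: once $\abs{\eps}$ is small one has $\eps f\le 1/2$, so \eqref{equ:L} gives $\log\ee{\exp{\eps\,(f*\xi)(t)}}=\bprn{h[\eps f]*\mu}(t)$ for $t\in[0,T]$. Since $f$ is bounded and $\ee{\exp{\tot\xi[0,T]}}<\infty$ by \eqref{equ:acher}, the left-hand side is, for each $t$, a real-analytic function of $\eps$ near $0$ whose Taylor coefficients are exactly $\tfrac{1}{n!}\kappa_n[(f*\xi)(t)]$. For the right-hand side I would define $K_n\in\ssil$ by \eqref{equ:cocco} and check, using submultiplicativity of $\n{\cdot}_{\ssiT}$ together with $\n{g*\rho}_{\ssiT}\le\n{g}_{\ssiT}$ for $\rho\in\smp$, that $\n{K_n}_{\ssiT}\le C^n$; this is a Catalan-type majorization, since the comparison constants obey the same quadratic recursion, whose generating function $B$ solves $B=x+\tot B^2$ and therefore has a positive radius of convergence. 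Thus $g_\eps:=\sum_{n\ge1}\eps^nK_n$ converges in $\ssiT$ for small $\abs{\eps}$ and, collecting powers of $\eps$, solves $g_\eps=(\eps f+\tot g_\eps^2)*\rho$; by the uniqueness part of Proposition \ref{pro:tangi}, $g_\eps=h[\eps f]$ on $[0,T]$. Comparing the two convergent power series in $\bprn{h[\eps f]*\mu}(t)=\sum_{n\ge1}\eps^n\,(K_n*\mu)(t)$ then yields $\kappa_n[f*\xi]=n!\,K_n*\mu$.

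Part \itref{ite:partial} runs along the same lines with $f_0+\eps f$ in place of $\eps f$. For small $\abs{\eps}$ one has $f_0+\eps f\le 1/2$ on $[0,T]$, so $((f_0+\eps f)*\xi)(t)\le\tot\xi[0,t]$ and \eqref{equ:acher} again makes $\ee{\exp{(f_0*\xi)(t)+\eps\,(f*\xi)(t)}}$ finite and analytic in $\eps$, while \eqref{equ:L} gives $\log\ee{\exp{(f_0*\xi)(t)+\eps\,(f*\xi)(t)}}=\bprn{h[f_0+\eps f]*\mu}(t)$. I would define $\seqz{K'_n}$ by \eqref{equ:pre-system} with $K'_0=h[f_0]$ (bounded on $[0,T]$ by the estimate \eqref{equ:Bragi}, passed to the limit as in Lemma \ref{lem:farse}); for $n\ge1$ each $K'_n$ is then the solution of a linear convolutional equation of the form $K'_n-(K'_0K'_n)*\rho=g_n$, whose forcing term $g_n$ involves only $f$ and $K'_0,\dots,K'_{n-1}$. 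Because $\rho(\set{0})=0$, the operator $v\mapsto(K'_0v)*\rho$ has spectral radius $0$ on $\ssiT$, so $K'_n$ is uniquely determined and, propagating the estimate across successive subintervals of $[0,T]$, obeys a geometric bound $\n{K'_n}_{\ssiT}\le C^{n+1}$. Hence $\sum_{n\ge0}\eps^nK'_n$ converges in $\ssiT$ for small $\abs{\eps}$ and, collecting powers of $\eps$ in \eqref{equ:V} with forcing $f_0+\eps f$ (the $\eps^0$ coefficient recovers \eqref{equ:V} for $f_0$, forcing $K'_0=h[f_0]$ as in \eqref{equ:pre-system}), equals $h[f_0+\eps f]$ on $[0,T]$ by Proposition \ref{pro:tangi}. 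Matching coefficients gives $\kappa_n[f_0*\xi,f*\xi]=n!\,K'_n*\mu$, and uniqueness of $\seqz{K'_n}$ in $(\ssil)^{\N_0}$ is immediate by induction: $K'_0$ is prescribed, and for $n\ge1$ the $n$-th relation of \eqref{equ:pre-system} is a linear convolutional equation determining $K'_n$ uniquely from $K'_0,\dots,K'_{n-1}$ (again by the quasi-nilpotence afforded by $\rho(\set{0})=0$).

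The step I expect to be the main obstacle is the analyticity in the generating parameter, that is, the $\ssiT$-convergence of $\sum\eps^nK_n$ and of $\sum\eps^nK'_n$: in part \itref{ite:cumulants} this rests on the Catalan-type majorant estimate, and in part \itref{ite:partial} on the quasi-nilpotence of the linearized convolution operator on $[0,T]$, which is precisely where the assumption $\rho(\set{0})=0$ enters. Everything else --- termwise differentiation of the moment-generating functionals, submultiplicativity of $\n{\cdot}_{\ssiT}$, and the convolution estimates of Lemma \ref{lem:daily} --- is routine.
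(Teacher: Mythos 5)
Your proposal follows the same route the paper takes: linearize in a generating parameter $\eps$, use the characterization $\mgf_\xi[g]=\exp{h[g]*\mu}$, expand $h[f_0+\eps f]$ (or $h[\eps f]$) as a power series in $\eps$ whose coefficients solve the triangular system of linear convolutional equations, and match coefficients against the cumulant expansion. The paper delegates the power-series step to Proposition \ref{pro:equation}, whose proof (via Proposition \ref{pro:sys-sol} and Lemma \ref{lem:exp-eps}) is exactly the Catalan-majorant estimate plus absolute $\ssiT$-convergence that you re-derive inline; your truncation of $f$ to $[0,T]$ and the quasi-nilpotence argument for the linearized operator are likewise the same content as Lemma \ref{lem:linear} and Lemma \ref{lem:unwax}. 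So this is essentially the paper's proof, with the appendix unpacked rather than cited.

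One thing your careful coefficient-matching should make you notice: if you literally define $\seqz{K'_n}$ by \eqref{equ:pre-system} as printed, then $\sum_{n}\eps^n K'_n$ does \emph{not} expand \eqref{equ:V} with forcing $f_0+\eps f$. Expanding $h=((f_0+\eps f)+\tot h^2)*\rho$ gives at order $\eps^1$ the relation $K'_1=f*\rho+(K'_0K'_1)*\rho$, with $f*\rho$ rather than $f$ on the right-hand side; this is also what is required for consistency with \eqref{equ:cocco} (which has $K_1=f*\rho$, and is claimed to be the special case $K'_0=h[0]=0$) and with $\kappa_1=(f*\rho)*\mu$ in \eqref{equ:cumulants}. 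In the language of the appendix, you must apply Proposition \ref{pro:equation} with $F=f_0*\rho$ (so that $K[F]=h[f_0]$) and $G=f*\rho$, not with $G=f$. So your method is correct, but you should not \emph{define} $K'_n$ by \eqref{equ:pre-system} and then assert it matches the expansion; rather, derive the system from the expansion, which will produce the corrected first equation $K'_1=f*\rho+(K'_0K'_1)*\rho$. Everything else in your argument, including the $\ssiT$-analyticity in $\eps$ via \eqref{equ:acher}, the Catalan-type majorization, and the $\rho(\set{0})=0$ contraction, is sound.
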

\begin{remark}
The first three cumulants/moments are given below:
\begin{equation}
\label{equ:cumulants}
\begin{split}
\kappa_1 & =\ee{ f*\xi } = (f*\rho)*\mu \\
\kappa_2 & =\var{ f*\xi } = \prn{\Big. (f*\rho)^2*\rho }*\mu \\
\kappa_3 & = \ee{ \prn{ f*\xi - \ee{f*\xi}}^3 } = 3 \prn{\Big. \prn{\Big.
\prn{\big. (f*\rho) ((f*\rho)^2*\rho) } }*\rho }*\mu
\end{split}
\end{equation}
\end{remark}
\subsubsection{Infinite divisibility}
The following property of $F(\mu,\rho)$ follows directly
from its characterization \eqref{equ:char-xi}, \eqref{equ:h-eq}.
\begin{proposition}
\label{pro:wanly}
Suppose that $\xi_1 \sim F(\mu_1,\rho)$ and $\xi_2 \sim F(\mu_2,\rho)$ where
$\mu_1, \mu_2$ are locally finite and $\rho$ is a
probability measure with $\rho(\set{0})=0$. If $\xi_1$ and $\xi_2$
are independent, then
\[ \xi_1+\xi_2 \sim F(\mu_1+\mu_2, \rho).\]
\end{proposition}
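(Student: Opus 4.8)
The plan is to verify directly that $\xi_1+\xi_2$ obeys the characterizing identities \eqref{equ:L}, \eqref{equ:V} with $\mu$ replaced by $\mu_1+\mu_2$, and then invoke the fact, already exploited in the proof of Theorem \ref{thm:main}, that such identities pin down the law of a locally finite random measure. First note that $\xi_1+\xi_2$ is again a locally finite random measure, being a finite sum of locally finite ones, so that $F(\mu_1+\mu_2,\rho)$ is the appropriate object to compare against (its existence is guaranteed by Theorem \ref{thm:main} together with Proposition \ref{pro:Manny}, since $\mu_1+\mu_2\in\smlf$). Now fix $f\in\czz$ with $f\leq 0$ and $t\geq 0$. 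By linearity of the convolution in its measure argument, $\bprn{f*(\xi_1+\xi_2)}(t) = (f*\xi_1)(t) + (f*\xi_2)(t)$; the two summands are $\sigma(\xi_1)$- and $\sigma(\xi_2)$-measurable, hence independent, and each is $\leq 0$, so the exponentials below are bounded by $1$ and the product formula is legitimate:
\[ \mgf_{\xi_1+\xi_2}[f](t) = \ee{\exp{(f*\xi_1)(t)}}\,\ee{\exp{(f*\xi_2)(t)}} = \mgf_{\xi_1}[f](t)\,\mgf_{\xi_2}[f](t). \]

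Next I would apply the characterization \eqref{equ:L} to $\xi_1$ and to $\xi_2$. The key observation is that the solution $h=h[f]$ of the convolutional Riccati equation \eqref{equ:V} depends only on $f$ and $\rho$ — and is the \emph{unique} solution in $\ssil$, by Proposition \ref{pro:tangi} — so that the \emph{same} function $h[f]$ appears in the characterizations of $\xi_1$ and of $\xi_2$, which share the parameter $\rho$. Using once more the linearity of convolution in the measure slot,
\[ \mgf_{\xi_1+\xi_2}[f] = \exp{h[f]*\mu_1}\,\exp{h[f]*\mu_2} = \exp{h[f]*\mu_1 + h[f]*\mu_2} = \exp{h[f]*(\mu_1+\mu_2)}. \]
Since this holds for every $f\in\czz$ with $f\leq 0$, and since a relation of this form determines the law of a locally finite random measure — this is precisely how uniqueness of the subsequential limit is obtained at the end of the proof of Theorem \ref{thm:main}, ultimately via \cite[Theorem 2.2, p.~52]{Kal17} — we conclude that $\xi_1+\xi_2\sim F(\mu_1+\mu_2,\rho)$. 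The same computation extends verbatim to all $f\in\czz$ with $f\leq 1/2$ once one records, via \eqref{equ:acher}, that $(f*\xi_i)(t)\leq\tot\xi_i([0,t])$ has a finite exponential moment, so that the product formula for $\mgf$ remains meaningful.

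There is no substantial obstacle here: the statement is a direct manipulation of the explicit moment-generating characterization, entirely parallel to the classical fact that the moment-generating function of a sum of independent random variables factors, with the role of the "parameter'' $h[f]$ handled by its uniqueness. The only points that call for any care — finiteness of $\mgf_{\xi_i}[f]$ (trivial for $f\leq 0$, and otherwise supplied by \eqref{equ:acher}) and the observation that $h[f]$ is a function of $(f,\rho)$ alone — have already been established in the excerpt, so the write-up is short.
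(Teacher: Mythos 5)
Your argument is correct and is exactly the one the paper intends: the paper offers no proof beyond the remark that the proposition ``follows directly from the characterization \eqref{equ:L}, \eqref{equ:V},'' and your write-up (factoring $\mgf_{\xi_1+\xi_2}[f]$ via independence, observing that $h[f]$ depends only on $(f,\rho)$ by uniqueness in Proposition \ref{pro:tangi}, and concluding via the determination of the law from the moment-generating functional as in the end of the proof of Theorem \ref{thm:main}) is the intended spelling-out of that remark.
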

\begin{corollary}
\label{cor:moral}
Given $\xi \in F(\mu,\rho)$,
the random variable $(f*\xi)(t)$ is infinitely divisible for all 
$t\geq 0$ and 
all $f\in C\pos$ with $f(0)=0$.
\end{corollary}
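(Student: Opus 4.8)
The plan is to deduce the statement directly from the additivity of the parameter $\mu$ under independent superposition, i.e.\ from Proposition \ref{pro:wanly}. Fix $f \in \czz$ and $t \geq 0$. First I would observe that $(f*\xi)(t) = \int_{[0,t]} f(t-s)\, \xi(ds)$ is an honest, a.s.\ finite real-valued random variable: $\xi$ is locally finite, so $\xi[0,t]<\infty$ a.s., and $f$, being continuous, is bounded on the compact interval $[0,t]$.

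Next, fix $N \in \N$ and set $\mu_k := \tfrac1N \mu$ for $k = 1,\dots, N$, each of which is again a locally finite measure. Since $\rho$ is a GID probability measure with $\rho(\set{0}) = 0$ — this is part of the meaning of $\xi \sim F(\mu,\rho)$ — the combination of Theorem \ref{thm:main} and Proposition \ref{pro:Manny} guarantees that a Feller random measure with parameters $(\mu_k, \rho)$ exists. I would then take independent random measures $\xi_1, \dots, \xi_N$ with $\xi_k \sim F(\tfrac1N \mu, \rho)$ and apply Proposition \ref{pro:wanly} inductively in $k$ to conclude $\sum_{k=1}^N \xi_k \sim F\bigl(\sum_{k=1}^N \mu_k, \rho\bigr) = F(\mu,\rho)$. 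Since $F(\mu,\rho)$ pins down a unique law on $\smlf$ via \eqref{equ:L}, the random measure $\sum_{k=1}^N \xi_k$ has the same distribution as $\xi$.

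Finally, because the map $\nu \mapsto (f*\nu)(t)$ is measurable from $\smlf$ to $\R$ and additive (convolution is linear in the measure argument), applying it to both sides gives the distributional identity $(f*\xi)(t) \eqd \sum_{k=1}^N (f*\xi_k)(t)$, whose summands are iid, being the images of the iid random measures $\xi_k$ under one fixed deterministic functional. As $N \in \N$ was arbitrary, $(f*\xi)(t)$ is infinitely divisible, which is the claim.

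I do not expect a genuine obstacle here: the content is entirely carried by Proposition \ref{pro:wanly}, and the only points needing (routine) care are that $(f*\xi)(t)$ is a proper finite random variable — handled above via local finiteness of $\xi$ — and that independence and identical distribution are inherited by the images $(f*\xi_k)(t)$, which is immediate since they arise from applying the same measurable map to iid inputs. (Alternatively one could argue at the level of the moment-generating functional \eqref{equ:L}, noting that replacing $\mu$ by $\mu/N$ scales the exponent $h[f]*\mu$ by $1/N$ while leaving the Riccati equation \eqref{equ:V}, hence $h[f]$, untouched; but the random-measure argument is cleaner and sidesteps any fuss over the range $f\le 1/2$.)
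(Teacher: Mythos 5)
Your proposal is correct and is exactly the argument the paper intends: Corollary \ref{cor:moral} is stated without proof immediately after Proposition \ref{pro:wanly}, and the intended reasoning is precisely your superposition argument — take iid $\xi_k\sim F(\mu/N,\rho)$ (which exist for every $N$ since $\mu/N\in\smlf$ and $\rho$ is GID with $\rho(\{0\})=0$), apply Proposition \ref{pro:wanly} inductively to get $\sum_k\xi_k\eqd\xi$, and push forward through the measurable, additive functional $\nu\mapsto(f*\nu)(t)$.
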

A stochastic process $\fml{Y_t}{t\geq 0}$ is said to be
\define{infinitely divisible} if all of its finite-dimensional distributions
are infinitely divisible. 
\begin{corollary}
Suppose that $F(\oo{N} \mu, \rho)$ admits a right-continuous density
$Y^{(N)}$ for each $N\in\N$. Then $Y^{(1)}$ is infinitely divisible.
\end{corollary}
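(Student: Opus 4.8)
The plan is to use the additivity property of Proposition~\ref{pro:wanly} to exhibit the $F(\mu,\rho)$ random measure as a sum of $N$ independent copies of $F(\tfrac1N\mu,\rho)$, to pass this decomposition to the densities, and then to use right-continuity to reconstruct the finite-dimensional distributions of the density from the law of the random measure alone. The only genuinely delicate point is this last one: the distribution of a locally finite random measure determines its density only up to a $\Leb$-null set for a.e.\ $\omega$, so the pointwise values $Y^{(1)}_{t_i}$ are \emph{a priori} not measurable functionals of the random measure; right-continuity is exactly what repairs this, and it is the reason the hypothesis is imposed for every $N$ and not only for $N=1$.

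First I would set up the decomposition. Fix $N\in\N$, let $\xi^{(N)}_1,\dots,\xi^{(N)}_N$ be independent, each with law $F(\tfrac1N\mu,\rho)$, and let $Y^{(N),1},\dots,Y^{(N),N}$ be their right-continuous densities, which exist by hypothesis and form an i.i.d.\ family. By Proposition~\ref{pro:wanly}, $\eta^{(N)}:=\sum_{j=1}^N\xi^{(N)}_j\sim F(\mu,\rho)$, so $\eta^{(N)}$ has the same law, as a random element of $\smlf$, as the random measure $\xi\sim F(\mu,\rho)$ which (by the case $N=1$ of the hypothesis) admits the right-continuous density $Y^{(1)}$. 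Since the sum over $j$ is finite, $S^{(N)}:=\sum_{j=1}^N Y^{(N),j}$ is, a.s., a density of $\eta^{(N)}$, and it is right-continuous, being a finite sum of right-continuous functions.

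Next I would introduce a measurable reconstruction map. For $t\ge 0$ put $\phi_t(\nu):=\lim_{k\to\infty}k\,\nu([t,t+1/k])$ when this limit exists in $\R$ and $\phi_t(\nu):=0$ otherwise; since $\nu\mapsto k\,\nu([t,t+1/k])$ is measurable for the measurable structure on $\smlf$ (generated by the evaluation maps), each $\phi_t$, and hence $\phi:=(\phi_{t_1},\dots,\phi_{t_n}):\smlf\to\R^n$ for any $0\le t_1<\dots<t_n$, is measurable. If $\nu$ admits a right-continuous density $z$, then $k\,\nu([t,t+1/k])=k\int_t^{t+1/k}z(s)\,ds\to z(t)$, so $\phi(\nu)=(z(t_1),\dots,z(t_n))$; in particular $\phi(\xi)=(Y^{(1)}_{t_1},\dots,Y^{(1)}_{t_n})$ a.s.\ and $\phi(\eta^{(N)})=(S^{(N)}_{t_1},\dots,S^{(N)}_{t_n})=\sum_{j=1}^N(Y^{(N),j}_{t_1},\dots,Y^{(N),j}_{t_n})$ a.s.

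Since $\xi$ and $\eta^{(N)}$ have the same law as random elements of $\smlf$ and $\phi$ is measurable, $\phi(\xi)\eqd\phi(\eta^{(N)})$, i.e.
\[
(Y^{(1)}_{t_1},\dots,Y^{(1)}_{t_n})\ \eqd\ \sum_{j=1}^N \prn{Y^{(N),j}_{t_1},\dots,Y^{(N),j}_{t_n}},
\]
a sum of $N$ i.i.d.\ $\R^n$-valued random vectors; hence $(Y^{(1)}_{t_1},\dots,Y^{(1)}_{t_n})$ is infinitely divisible. As $N$, $n$ and $0\le t_1<\dots<t_n$ were arbitrary, this is precisely the assertion that $Y^{(1)}$ is an infinitely divisible process. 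The remaining verifications — measurability of $\phi_t$, the fact that a finite sum of densities is a density of the sum of the measures, and the bookkeeping of infinite divisibility for random vectors — are routine.
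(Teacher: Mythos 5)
Your proof is correct, and the paper itself supplies no proof for this corollary — it is stated as an immediate consequence of Proposition~\ref{pro:wanly}. Your argument is the one the authors clearly have in mind, and you have correctly identified the only step that is not entirely routine: the law of a locally finite random measure determines a density only $\Leb$-a.e.\ pathwise, so one cannot pass directly from equality in law of $\xi$ and $\sum_j\xi^{(N)}_j$ in $\smlf$ to equality in law of the vectors of pointwise density values. Your measurable reconstruction map $\phi_t(\nu)=\lim_k k\,\nu([t,t+1/k])$, combined with the hypothesized right-continuity, is exactly the right way to repair this, and it also explains why the hypothesis must be imposed for every $N$ rather than just $N=1$. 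The remaining steps (iterating Proposition~\ref{pro:wanly} to $N$ summands, noting that a finite sum of right-continuous densities is a right-continuous density of the sum, and checking measurability of $\phi_t$ via the evaluation maps generating the $\sigma$-algebra on $\smlf$) are all in order. In short: correct, complete, and consistent with what the paper leaves implicit.
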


\subsubsection{The Covariance Structure}
\label{sss:cov}
The polarization identity and
the expression for $\kappa_2$ in \eqref{equ:cumulants} yield
\begin{align}
\label{equ:tromp}
\Cov[ f*\xi, g*\xi] = \Bprn{\bprn{ (f*\rho) (g*\rho) }* \rho}*\mu,
\end{align}
for all $f,g \in C\pos$ with $f(0)=g(0)=0$. 
Equation \eqref{equ:tromp} can be rewritten as 
\begin{align}
\label{equ:traks}
\Cov[ f*\xi, g*\xi](t) = \iint f(t-r) g(t-s) \gamma(dr, ds),
\end{align}
where
\begin{align}
\label{Chama}
\gamma(dr,ds) = \int \rho(dr-u) \rho(ds-u) (\rho*\mu)(du) 
\end{align}
i.e.,
$\gamma(B) = \int \int \int \ind{B+(u,u)}(s,r)  \rho(ds)
\rho(dr)\,  (\rho*\mu)(du)$ for each Borel $B \subseteq \pos\times\pos$. 
In the special case when $\rho$ admits a density $p$
with respect to Lebesgue measure, the measure $\gm$ is absolutely
continuous and
\begin{align}
\label{equ:lux}
\gamma(dr,ds) = \Sigma(r,s)\, dr\, ds \ewhere
\Sigma(r,s) =
\int p(r-u) p(s-u) (p*\mu)(u)\, du.
\end{align}
A further specialization yields tight asymptotics
around the diagonal $r=s$.
For functions $f:D_f\subseteq \R^d \to \R$ and
$g:D_g \subseteq
\R^d \to \R$, we write $f \apc g$ if for each bounded $B\in\sB(\R^d)$
there exists a strictly positive constant $C$  such that
$f \leq C g$ and $g \leq C f$ on $D_f \cap D_g \cap B$.

\begin{proposition}
\label{pro:diag-asy}
Suppose that $\rho$ is a Mittag-Leffler distribution with parameter
$\al \in (0,1]$,
and that  $\mu$ is the Lebesgue measure on $\pos$.
Then
\begin{align}
\label{equ:Sig-bds}
\Sigma(r,s) \apc B_{\al}(r,s)
\end{align}
where $B_{\al}:\sets{ (s,r) \in (0,\infty)^2}{ s\ne r} \to (0,\infty)$ is
a symmetric function defined for $r<s$ by
\begin{align}
\label{equ:B-al}
B_{\al}(r,s) =
r^{2\al} s^{\al-1}
\begin{cases}
  \prn{ 1- \frac{r}{s} }^{2\al -1}, & \al<\tot,\\
  1-\log(1-\frac{r}{s}), & \al = \tot, \\
  1, & \al > \tot.
\end{cases}
\end{align}
\end{proposition}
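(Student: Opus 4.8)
The plan is to reduce the claim to a sharp two-sided estimate of a single one-parameter family of Beta-type integrals. First I would record the local behaviour of the Mittag-Leffler density $p=p^{\al}$ from \eqref{equ:ML-dens}. Since $E_{\al,\al}$ is an entire function, $t\mapsto E_{\al,\al}(-t^{\al})=t^{1-\al}p(t)$ is continuous on $\pos$, and it is classical that $p^{\al}$ is continuous and strictly positive on $\spos$ (indeed completely monotone there), with $p(t)t^{1-\al}\to 1/\Gamma(\al)$ as $t\downto 0$. Hence $p(t)\apc t^{\al-1}$ on every bounded subset of $\spos$, and, after integrating, $(p*\mu)(u)=\int_0^u p(v)\,dv\apc u^{\al}$ on every bounded set (here $\mu=\Leb$), all comparison constants depending only on $\al$ and the given bound. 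Plugging these two comparisons into the definition \eqref{equ:lux} of $\Sigma$, using that $\Sigma$ is symmetric so that we may assume $r<s$, and performing the substitutions $u\mapsto v=r-u$ and then $v\mapsto w=v/r$, one obtains
\[
  \Sigma(r,s)\apc \int_0^r v^{\al-1}(d+v)^{\al-1}(r-v)^{\al}\, dv = r^{3\al-1}\, J(\be),
\]
where $d=s-r$, $\be=d/r$, and $J(\be):=\int_0^1 w^{\al-1}(\be+w)^{\al-1}(1-w)^{\al}\, dw$. Even though $\be$ ranges over all of $\spos$ as $(r,s)$ varies through a bounded set, the comparison constants above do not depend on $\be$, so this $\apc$ is legitimate.

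The heart of the proof is then the analysis of $J$. On compact subsets of $\spos$ one has $J(\be)\apc 1$ by continuity and positivity. For $\be\ge 1$ one bounds $(\be+w)^{\al-1}\apc\be^{\al-1}$ uniformly in $w\in[0,1]$ and is left with $\int_0^1 w^{\al-1}(1-w)^{\al}\,dw$, so $J(\be)\apc\be^{\al-1}$. For $\be\le 1$ one replaces $(1-w)^{\al}$ by a constant on $[0,\tot]$, bounds the tail integral over $[\tot,1]$ by a constant, and reduces to $\int_0^{1/2}w^{\al-1}(\be+w)^{\al-1}\,dw$; splitting this at $w=\be$, the piece $\int_0^{\be}$ contributes $\apc\be^{2\al-1}$ while $\int_{\be}^{1/2}w^{2\al-2}\,dw$ contributes $\apc\be^{2\al-1}$, $\apc 1+\log(1/\be)$, or $\apc 1$ according as $\al<\tot$, $\al=\tot$, or $\al>\tot$. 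Assembling the pieces — a brief use of $\max(x,C-x)\ge C/2$ keeps the lower bounds uniform for $\be$ near $1$ — gives
\[
  J(\be)\apc\be^{\al-1}\ \ (\be\ge 1),\qquad
  J(\be)\apc
  \begin{cases}
    \be^{2\al-1}, & \al<\tot,\\
    1+\log(1/\be), & \al=\tot,\\
    1, & \al>\tot,
  \end{cases}
  \qquad(\be\le 1).
\]

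Finally I would insert $\be=d/r$, $s=r+d$ and $1-r/s=d/s$ into $r^{3\al-1}J(d/r)$ and compare with the formula \eqref{equ:B-al} for $B_{\al}$, separating the regimes $d\le r$ (so $s\apc r$, hence $d/s\apc d/r$ and $1+\log(s/d)\apc 1+\log(r/d)$) and $d\ge r$ (so $s\apc d$). In the regime $d\ge r$ all three branches of $B_{\al}$ collapse to $r^{2\al}d^{\al-1}=r^{3\al-1}(d/r)^{\al-1}$; in the regime $d\le r$ they reduce, for $\al<\tot$, $\al=\tot$, $\al>\tot$ respectively, to $r^{\al}d^{2\al-1}$, $r^{1/2}(1+\log(r/d))$, and $r^{3\al-1}$, each matching $r^{3\al-1}J(d/r)$. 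The exponential case $\al=1$ is subsumed in $\al>\tot$, since then $p^{1}(t)=e^{-t}\apc 1$ on bounded sets and $B_{1}\equiv r^{2}$. The step demanding the most care is not any single estimate but the bookkeeping of comparison constants: one must check that every $\apc$ stays uniform over the relevant unbounded range of $\be$ while $(r,s)$ remains in a fixed bounded set, and that the piecewise bounds for $J$ glue together without loss in the transition region near $\be=1$.
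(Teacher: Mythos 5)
Your proof is correct, and it takes a genuinely different (and more self-contained) route than the paper's. After the common initial reductions $p(t)\apc t^{\al-1}$, $(p*\mu)(u)\apc u^{\al}$ on bounded sets — which you set up carefully, including the key observation that the resulting comparison constant for $\Sigma(r,s)\apc r^{3\al-1}J(\be)$ depends only on the bounded set containing $(r,s)$ and not on $\be=(s-r)/r$ — the two proofs diverge. The paper substitutes $u=rw$ to obtain $\int_0^1(1-w)^{\al-1}(1-\tfrac{r}{s}w)^{\al-1}w^{\al}\,dw$, identifies it as $\hgF(1-\al,\al+1;2\al+1;r/s)$ via the Euler integral representation \cite[eq.~(15.6.1)]{DLMF}, and then reads off the $r/s\to 1$ behaviour from the argument-unity asymptotics of $\hgF$ in \cite[\S 15.4]{DLMF} (convergent when $c-a-b=2\al-1>0$, logarithmic when $=0$, and $(1-z)^{2\al-1}$ when $<0$). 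You instead substitute $u\mapsto r-u$, $v=rw$, getting the Beta-like integral $J(\be)=\int_0^1 w^{\al-1}(\be+w)^{\al-1}(1-w)^{\al}\,dw$, and extract the same trichotomy by hand via the splits $\be\lessgtr 1$, $[0,\tot]\cup[\tot,1]$, and $w\lessgtr\be$, followed by the bookkeeping you describe. Your approach trades the external special-function facts for an elementary but slightly longer domain decomposition, and has the advantage of making the uniformity of the comparison constants across the unbounded range of $\be$ completely explicit; the paper's approach is shorter once the DLMF references are granted. The only delicate points in your version — the gluing of the three estimates of $J$ across the transition region $\be\asymp 1$, and the passage from $1+\log(s/d)$ to $1+\log(r/d)$ when $d\le r$ — are ones you flag and handle correctly (both quantities are bounded between universal positive constants there, so multiplicative equivalence is automatic).
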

\begin{proof}
Being entire, the Mittag-Leffler function $E_{\al,\al}$
satisfies
$E_{\al,\al} \apc 1$, and so, thanks to \eqref{equ:ML-dens},
we have $p(t) \apc t^{\al-1}$.
Moreover, since $\mu$ is the Lebesgue measure, we have
$(p*\mu)(t) \apc t^{\al}$ so that
\begin{align*}
p(r-u) p(s-u) (p*\mu)(u)  \apc
(r-u)^{\al-1}(s-u)^{\al-1} u^{\al}.
\end{align*}
Therefore, by \eqref{equ:lux},
\begin{equation}
\label{detur}
  \begin{split}
  \Sigma(r,s) &\apc   \int_0^r (r-u)^{\al-1} (s-u)^{\al-1} u^{\al}\, du
           \\ &= r^{2 \al} s^{\al-1}
\int_0^1 (1 - w)^{\al-1}
(1 - \tfrac{r}{s}  w)^{\al-1} w^{\al} dw.
\end{split}
\end{equation}
According to \cite[eq.~(15.6.1)]{DLMF}, we have
\begin{align}
\label{tengu}
\int_0^1 (1 - w)^{\al-1} (1 - \tfrac{r}{s}  w)^{\al-1} w^{\al} dw =
\Gamma(\al+1) \Gamma(\al)\,
\hgF \prn{1-\al, \al+1; 2\al + 1; \frac{r}{s}},
\end{align}
where $\hgF$ denotes the hypergeometric function.
We have the following three 
asymptotic regimes for $\hgF$ (see \cite[\S 15.4(ii)]{DLMF}):
\begin{align*}
\hgF \prn{1-\al, \al+1; 2\al + 1; x} &\apc (1-x)^{ 2\al - 1}  &
\efor&
\al < \tot, \\
\hgF \prn{1-\al, \al+1; 2\al + 1; x} &\apc 1-\log(1-x)  & \efor
&
\al = \tot , \eand \\
\hgF \prn{1-\al, \al+1; 2\al + 1; x} &\apc 1 &  \efor&
\al > \tot, 
\end{align*}
and they 
establish \eqref{equ:Sig-bds} via \eqref{detur} and \eqref{tengu}. 
\end{proof}

\subsubsection{Absolute continuity and absence of atoms}
% The expressions \eqref{equ:lux} and 
% \eqref{equ:B-al} for the covariance measure can be used to 
% study the regularity of sample realizations of Feller random measures. 
It is a direct consequence of Proposition \ref{pro:diag-asy}, as formulated
in Corollary
\ref{cor:gater} below,  that in the special case of the Mittag-Leffler 
measure $\rho$, a density may or may not exist, depending
on the value of the parameter $\al$.  
\begin{corollary}
\label{cor:gater}
Suppose that $\rho$ is a Mittag-Leffler distribution with parameter
$\al \in (0,1]$, and that  $\mu$ is the Lebesgue measure on $\pos$.
Then $\xi \sim F(\mu,\rho)$ admits a square-integrable density
if and only if $\al > 1/2$.
\end{corollary}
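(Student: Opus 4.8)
The plan is to extract both implications from the covariance structure of $\xi$ --- Proposition~\ref{pro:diag-asy} together with \eqref{equ:traks}, \eqref{equ:lux} --- by probing $\xi$ with the mollified densities $Y^{\eps}_t:=\eps^{-1}\xi([t,t+\eps])$, $\eps\in(0,1]$; throughout, ``square-integrable density'' is understood as a density $Y$ with $\ee{\int_0^T Y_t^2\,dt}<\infty$ for every $T\ge0$. I will use two preliminary facts. First, for $f_n\in\czz$ with $0\le f_n\upto\ind{(0,\eps)}$ one has $(f_n*\xi)(t+\eps)\upto\xi((t,t+\eps))$, the convergence being in $L^2(\PP)$ because \eqref{equ:acher} supplies the needed uniform integrability; passing to the limit in \eqref{equ:traks}--\eqref{equ:lux} then gives, for a.e.\ $t$ (those $t$ at which $\xi$ charges neither $\{t\}$ nor $\{t+\eps\}$ --- a co-null set, since $\ee{\xi(\{s\})}=0$ for every $s$), the identity $\var{\xi([t,t+\eps])}=\iint_{[t,t+\eps]^2}\Sigma(u,v)\,du\,dv$. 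Second, since $\mu=\Leb$ and $\rho$ is Mittag--Leffler, $p(t)\apc t^{\al-1}$ (as in the proof of Proposition~\ref{pro:diag-asy}), $\ee{Y_t}=\rho([0,t])\le1$, and $\rho([0,s])\apc s^{\al}$, $\rho((s-\eps,s))\apc\eps\,s^{\al-1}$ (for $s\ge2\eps$) on bounded sets.

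For the ``only if'' direction ($\al\le\ot$) I would argue by contradiction. If $\xi$ had a density $Y$ with $\int_0^T\ee{Y_t^2}\,dt<\infty$ for all $T$, then mollification in time gives $Y^{\eps}\to Y$ in $L^2(\PP\times[0,T])$, hence $M_T:=\sup_{\eps\in(0,1]}\int_0^T\ee{(Y^{\eps}_t)^2}\,dt<\infty$. Fix $T$ and take $\eps\in(0,1]$, $t\in[\eps,T]$. On $[0,T+1]^2$ the relation $\Sigma\apc B_{\al}$ gives $\Sigma\gtrsim_T B_{\al}$ off the diagonal, and \eqref{equ:B-al} shows, for $u,v\in[t,t+\eps]$ (using $u+\eps\le2u$), that $B_{\al}(u,v)\gtrsim t^{\al}\abs{u-v}^{2\al-1}$ when $\al<\ot$ and $B_{1/2}(u,v)\gtrsim t^{1/2}\bprn{1+\babs{\log\abs{u-v}}}$ when $\al=\ot$. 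Combining with the elementary $\iint_{[0,\eps]^2}\abs{u-v}^{2\al-1}\,du\,dv=\eps^{2\al+1}/(\al(2\al+1))$ (resp.\ $\iint_{[0,\eps]^2}\bprn{1+\babs{\log\abs{u-v}}}\,du\,dv\gtrsim\eps^2\babs{\log\eps}$) and the identity above, we get $\var{Y^{\eps}_t}=\eps^{-2}\iint_{[t,t+\eps]^2}\Sigma\gtrsim_T t^{\al}\eps^{2\al-1}$, resp.\ $\gtrsim_T t^{1/2}\babs{\log\eps}$. Since $\ee{(Y^{\eps}_t)^2}\ge\var{Y^{\eps}_t}$, integrating over $t\in[\eps,T]$ forces $M_T\gtrsim_T\eps^{2\al-1}$ (resp.\ $\gtrsim_T\babs{\log\eps}$), which blows up as $\eps\downarrow0$ whenever $\al\le\ot$ --- a contradiction.

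For the ``if'' direction ($\al>\ot$) I would first show $M_T<\infty$ and then produce a square-integrable density. From $f_n\upto\ind{(0,\eps)}$ and the variance formula $\var{(f_n*\xi)(t+\eps)}=\int_0^{t+\eps}\bprn{(f_n*\rho)^2*\rho}(s)\,ds$ (from \eqref{equ:cumulants}, $\mu=\Leb$), together with $\abs{(f_n*\rho)(s)}\le\rho((s-\eps,s))$ and $\norm{\rho}=1$, one gets $\var{(f_n*\xi)(t+\eps)}\le\int_0^{T+1}\rho((s-\eps,s))^2\,ds$. Splitting this at $s=2\eps$ and using $\rho([0,s])\apc s^{\al}$ on $[0,2\eps]$, $\rho((s-\eps,s))\apc\eps\,s^{\al-1}$ on $[2\eps,T+1]$, bounds it by $C_T\bprn{\eps^{2\al+1}+\eps^2\int_{2\eps}^{T+1}s^{2\al-2}\,ds}\le C_T\eps^2$ --- and this is exactly where $\al>\ot$ is used, via convergence of $\int_0^{T+1}s^{2\al-2}\,ds$. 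Letting $n\to\infty$ (monotone convergence, $L^2$-convergence from \eqref{equ:acher}) gives $\var{\xi([t,t+\eps])}\le C_T\eps^2$, hence $\var{Y^{\eps}_t}\le C_T$, and with $\ee{Y^{\eps}_t}\le1$ also $\sup_{\eps}\ee{(Y^{\eps}_t)^2}\le C_T+1$ for a.e.\ $t\le T$; thus $M_T<\infty$. The family $\brc{Y^{\eps}}_{\eps\in(0,1]}$ is then bounded in $L^2(\PP\times[0,T])$ for every $T$, so a weak limit $Y$ along some $\eps_k\downarrow0$ (diagonalising in $T$) exists; one checks $\int_s^t Y_r\,dr=\xi([s,t])$ a.s.\ for all rational $s<t$ --- because $\int_s^t Y^{\eps}_r\,dr\to\xi([s,t])$ a.s.\ by dominated convergence against $\xi$ (using $\ee{\xi(\{s\})}=0$) while $Y^{\eps_k}\rightharpoonup Y$ --- so $Y$ is a density of $\xi$, and it lies in $L^2(\PP\times[0,T])$ by weak lower semicontinuity of the norm. (Existence of the density is, alternatively, exactly the content of Remark~\ref{rem:carls}.)

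The main obstacle is the lower bound in the ``only if'' part: one must turn the purely pointwise two-sided estimate $\Sigma\apc B_{\al}$ into a quantitative lower bound on $\var{Y^{\eps}_t}$ that genuinely diverges, which rests on the precise near-diagonal behaviour of $B_{\al}$ (in particular the logarithmic correction at the critical value $\al=\ot$) and on the soft fact that $Y^{\eps}\to Y$ in $L^2$. The remaining ingredients --- justifying the $\ind{(0,\eps)}$-approximation via \eqref{equ:acher}, keeping constants uniform for $t$ bounded away from $0$, and the routine atom and monotone-class bookkeeping --- are standard.
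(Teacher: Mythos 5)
The paper states this corollary with no proof, presenting it as a direct consequence of Proposition \ref{pro:diag-asy}; your proposal supplies exactly the kind of argument the paper has in mind, built on the covariance asymptotics of \eqref{equ:Sig-bds}, so in spirit the route is the same. The extra work you put in---making ``square-integrable density'' precise, replacing the diagonal value $\Sigma(t,t)$ by the mollified variances $\var{Y^{\eps}_t}=\eps^{-2}\iint_{[t,t+\eps]^2}\Sigma$, and running the a.e.\ / atom bookkeeping---is genuinely needed, since $\Sigma$ is only defined off the diagonal and the paper never says what ``admits a square-integrable density'' is supposed to mean in the case $\al\le\tot$ when no density exists at all; your contrapositive formulation (bounded second moments of $Y^\eps$) resolves this cleanly.

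Two minor notes. First, for the sufficiency ($\al>\tot$) you bypass Proposition \ref{pro:diag-asy} entirely and estimate $\int_0^{T+1}\rho((w-\eps,w))^2\,dw$ directly from $p\apc t^{\al-1}$; this is fine and slightly more elementary, though the same bound $\var{Y^\eps_t}\lesssim_T 1$ also falls out of $\Sigma\lesssim_T B_\al$ since $B_\al(u,v)=u^{2\al}v^{\al-1}$ is locally bounded when $\al>\tot$. Second, the phrase ``for a.e.\ $t$'' in the derivation of $\var{\xi([t,t+\eps])}=\iint_{[t,t+\eps]^2}\Sigma$ is unnecessarily cautious: since $\ee{\xi(\{s\})}=0$ for \emph{every} $s$ (as you observe), $\xi$ charges no fixed singleton a.s., so the identity holds for every $t$; this only improves the argument. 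The lower bounds $B_\al(u,v)\gtrsim t^{\al}|u-v|^{2\al-1}$ (resp.\ $t^{1/2}(1+|\log|u-v||)$) for $u,v\in[t,t+\eps]$, $t\ge\eps$, and the resulting blow-up $\var{Y^\eps_t}\gtrsim_T t^\al\eps^{2\al-1}$ (resp.\ $t^{1/2}|\log\eps|$) are computed correctly, as is the divergence of $\int_\eps^T\var{Y^\eps_t}\,dt$. The weak-$L^2$ extraction of the density in the sufficiency direction is a valid self-contained alternative to invoking Remark \ref{rem:carls}. Overall the proof is correct and carries out in full the program the corollary's placement in the paper implies.
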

On the other hand, Feller measures are almost surely 
non-atomic for all parameter pairs:
\begin{proposition}
\label{strow}
For any GID $\rho$ with $\rho(\set{0})=0$ and any locally finite $\mu$, the
Feller random measure 
$\xi \sim F(\mu,\rho)$ is almost surely atom-free, i.e. 
\begin{align*}
  \xi(\set{t}) =0 \eforall t\geq 0 \text{ a.s.}
\end{align*}
\end{proposition}

\begin{proof}
The relation \eqref{equ:traks}  shows that
the measure $\gamma$ given by \eqref{Chama} is the covariance measure of
$\xi$, i.e., that 
\begin{align}
  \label{Mosgu}
  \ee{ \iint  \ind{C}(r,s) \, \xi(dr) \, \xi(ds) } = \gamma(C),
\text{ for all Borel sets $C \subseteq \pos^2$. }
\end{align}
In particular, when $D =\sets{ (t,t) }{ t \in \pos}$
is the diagonal
of $\pos^2$, equation \eqref{Mosgu} becomes
\begin{align*}
  \Bee{ \sum_{t \geq 0} \xi(\set{t})^2} = \gamma(D),
\end{align*}
where the sum is taken over the at most countable set of $t$ with
$\xi(\set{t})>0$. 

It follows directly from \eqref{Chama} that
\begin{align}\label{scrow}
  \gamma(D) = \int \sum_{t\geq u} \rho(\set{t-u})^2 (\rho*\mu)(du) =
  \mu\pos \sum_t \rho(\set{t})^2.
\end{align}
Let $\nu$ be the infinitely divisible probability measure on $\pos$ 
related to $\rho$ by \eqref{equ:GID-Lap}, and let $d$ and $\eta$ be the 
drift and the jump measure of $\nu$, respectively. 
Since $\rho(\set{0}) = 0$, we have $\lim_{\ld \to \infty} \hrho(\ld) = 0$,
which implies that either $d>0$ or
$\eta(0,\infty)=+\infty$. Either condition (see \cite[Theorem 
27.4, p.~175]{Sat99}) implies that $\nu^{\tau}$ is atomless for each
$\tau>0$. Being a mixture of the family
$\sets{\nu^{\tau}}{\tau > 0}$
of atomless measures, the measure $\rho$ is atomless as well, which implies that
$\gamma(D) = 0$ via \eqref{scrow}.
\end{proof}

\appendix

\section{Auxiliary results and the proof of the main theorem}
\label{sec:appendix}

\subsection{Proofs of results in subsection \ref{sec:A}}
This section contains proofs of Propositions \ref{suine} and \ref{Todea}
in Subsection \ref{sec:A}. 
\label{sec:proof-asm}
\begin{proof}[Proof of Proposition \ref{suine}]
 We only need to prove that Assumption \ref{A} implies local infinitesimality. 
The rest is classical (see, e.g.,  \cite[Theorem  7.14, p.~157]{Kal21}); 
condition $4.$ is equivalent to $\nu(\set{0})=0$ as in the last paragraph
of the proof of Proposition \ref{strow} above. 

We argue by contradiction and assume that local infinitesimality does not hold. Passing to a subsequence if necessary, 
we have
\begin{align*}
  \pp{ S_{n}(\tau_n+1/n) - S_n(\tau_n) \geq \dl } \geq \dl,
\end{align*}
where $S_n$ is given by \eqref{SnL},  
for some $\dl>0$ and some  sequence $\seq{\tau_n}$ in $[0,\tau)$ such that
$\tau_n \to \tau' \in [0,\tau]$. 
It follows that 
\begin{align}
  \label{tocn}
  ((1-\dl) + \dl e^{-\ld \dl}) \phi_n(\ld, \tau_n + 1/n) \geq
  \phi_n(\ld, \tau_n)
\ewhere \phi_n(\ld, \sg) = \ee{ e^{-\ld S_n(\sg)}},
\end{align}
for $\ld \geq 0$. 
Since the random variables $X^m_n$ are nonnegative, 
the functions $\phi_n(\ld, \cdot)$ are monotone and converge pointwise 
to the
continuous function $\tau \mapsto \hnu^{\tau}$. 
Therefore, both $\phi_n(\ld, \tau_n+1/n)$ and $\phi_n(\ld, \tau_n)$ converge
to $\hnu^{\tau'}$, which contradicts \eqref{tocn}. 
\end{proof}

\begin{proof}[Proof of Proposition \ref{Todea}]
We adopt Assumption \ref{C} and use the shorthands
$\eps_n = 1 - a_n$ and
$m_n(\tau) = \ct$ throughout,  and agree that all $o$
and $O$ statements in this proof are uniform in $\tau$ on compacts.
We also define 
\begin{align*}
  P_n(\tau) = \sum_{m=1}^{\cat} - \log (\hpi^m_n) \eand
  P'_n(\tau) = \sum_{m=1}^{\cat} - \log (\hpi^{m \tmod{d_n}}_n)
\efor \tau > 0,
\end{align*}
where the dependence on the Laplace-transform 
parameter $\ld$ is notationally suppressed throughout most of the proof. 

Since the function $t \mapsto 1/(1+\ld) e^{-\ld t}$ is $[0,1]$-valued and
$1$-Lipschitz, the definition \eqref{coved} of the Fortet-Mourier metric 
$d_{FM}$ implies
\begin{align*}
  \babs{ \hpi^m_n(\ld) - \hpi^{m \tmod{d_n}}_n(\ld)} 
  \leq (1+\ld) d_{FM} \bprn{ \hpi^m_n(\ld) , \hpi^{m \tmod{d_n}}_n(\ld)}.
\end{align*}
Thanks to the assumed local infinitesimality of
$\arr{\pi^m_n}$, we have
\begin{align*}
  \lim_{n \to \infty} \inf_{m \leq m_n(\tau)} \hpi^m_n = 1 \eforall \tau >
  0,
\end{align*}
so that the elementary inequality
$\abs{\log(x) - \log(y)} \le \abs{x-y}/\min(x,y)$, valid for $x,y>0$, 
yields the following estimate
\begin{align*}
  \babs{ \log \hpi^m_n(\ld) - \log \hpi^{m \tmod{d_n}}_n(\ld)}   \leq
  (2+\ld ) d_{FM}\Bprn{ \hpi^m_n(\ld), \hpi^{m \tmod{d_n}}_n(\ld)} \ 
\end{align*}
for all large enough $n$ and all $m \leq m_n(\tau)$. Consequently, 
% \begin{align}
%   \label{frass}
%   \sum_{m=1}^{m_n(\tau)}
%  \log \hpi^m_n - \log \hpi^{m \tmod{d_n}}_n} = o(1).
% \end{align}
% Assumption \ref{C} further guarantees that 
\begin{align}
  \label{ppr}
  \abs{P_n(\tau) - P'_n(\tau)} = o(1),
\end{align}
A straightforward computation 
reveals that 
  \begin{align}
    \label{St}
  \abs{\tau P'_n(1) - P'_n(\tau)} \leq  O(P'_n(\eps_n d_n)).
\end{align}
Similarly,  and using \eqref{ppr} for the last equality, we get
\begin{align*}
  P_n(\eps_n d_n) = P'_n(\eps_n d_n) = \eps_n d_n O(P'_n(1)) = \eps_n d_n
  O(P_n(1)),
\end{align*}
and, from there, 
\begin{align} \label{rance}
P_n(\tau) = \tau P_n(1) + \eps_n d_n O(P_n(1)).
\end{align}
Assumption \ref{B2} states that $P_n(1) \to -
\log(\hnu)$, 
for some $\nu \in \smp\pos$.
This, in particular, implies that $P_n(1) = O(1)$, and, then, by
\eqref{rance},  that $P_n(\tau) = 
\tau P_n(1) + o(1)$. 
Hence, by \eqref{St},
\begin{align*}
  \lim_n P_n(\tau) = \lim_n \tau P_n(1) = - \tau \log( \hnu) \eforall \tau
  > 0.
\end{align*}
It follows that, for each $\tau>0$,   
$\hnu^\tau$ is a Laplace transform of a probability measure. Hence,
$\nu$ is infinitely divisible and 
Assumptions \ref{C} and \ref{B2} together imply Assumption \ref{A}. 

\medskip

Next, we impose Assumptions \ref{B1} and \ref{C}, and observe that
\begin{align*}
\hrho_n  =& \sum_{m=0}^{\infty} \eps_n e^{ - P_n(m \eps_n)} (1-\eps_n)^{m}  = 
   \int_0^{\infty} e^{-P_n(\sg)} (1-\eps_n)^{m_n(\sg)}\, d\sg.
\end{align*}
A second-order expansion of the function $\log(1-x)$ around $x=0$ gives
\begin{align}
  \label{unode}
  \Exp{-\sg} \geq (1-\eps_n)^{\cas} \geq \Exp{-\sg (1+2\eps_n) }  \eforall
  n\in\N \ewith \eps_n < 1/2 \eand  \sg \in \pos, 
\end{align}
so that 
\begin{align*}
  \hrho_n = \int_0^{\infty} e^{-\sg} e^{- P_n(\sg)}\, d\sg + o(1).
\end{align*}
Similarly, $\hrho'_n = \int_0^{\infty} e^{-\sg} e^{-P'_n(\sg)}\, d\sg +
o(1)$, where the definition of $\hrho'_n$ is the same as that of $\hrho_n$,
but with $P$ replaced by $P'$. It follows that
\begin{align*}
  \abs{\hrho'_n - \hrho_n} 
  & \leq
  \int_0^{\infty} e^{-\sg} 
  \abs{ e^{- P_n(\sg)} - e^{-P'_n(\sg)}} \, d\sg + o(1) 
    \\ &\leq \int_0^{\infty} e^{-\sg} 
    \max\prn{ \abs{P_n(\sg) - P'_n(\sg)}, 1}\, d\sg + o(1). 
\end{align*}
The estimate \eqref{ppr}
% , which relied only on Assumption \ref{C} and 
and the dominated convergence theorem imply that $\abs{\hrho'_n - \hrho_n} \to
0$. It follows from Assumption \ref{B1} that
\begin{align*}
  \hrho'_n \to \hrho := \lim_n \hrho_n. 
\end{align*}
For $m \in \N$ we define
$r^m_n = m \tmod{d_n} \in \set{1,\dots, d_n}$ and 
$q^m_n = (m - r^m_n)/ d_n $, so that 
\begin{align*}
  \hrho'_n 
  &:= \sum_{m=0}^{\infty} \eps_n (1-\eps_n)^m 
  e^{ - P'_n(m \eps_n) } 
  \\ &= \sum_{m=0}^{\infty} \eps_n 
(1-\eps_n)^{q^m_n d_n} e^{ - q^m_n P'_n(d_n\eps_n)}
(1-\eps_n)^{r^m_n} e^{ - P'_n (r^m_n\eps_n)}
  \\ &= \sum_{q=0}^{\infty} \eps_n 
(1-\eps_n)^{q d_n} e^{ -  q P'_n(d_n\eps_n)}
\sum_{r=1}^{d_n} (1-\eps_n)^{r} e^{ - P'_n (r\eps_n)}
  \\ &= \frac{R_n \eps_n d_n}{ 
  1- e^{- P'_n(d_n\eps_n)} (1-\eps_n)^{d_n}}   \ewhere
R_n = \oo{d_n} \sum_{r=1}^{d_n} e^{-P'_n(r\eps_n)} (1-\eps_n)^r.
\end{align*}
Since $\hrho_n \to \hrho \in (0,\infty)$, 
the condition $\eps_n d_n \to 0$ forces 
\begin{align*}
  \frac{R_n}{ 1-\Exp{-P'_n(\eps_n d_n)}(1-\eps_n)^{d_n}} \to \infty. 
\end{align*}
The observation that $R_n \leq 1$ together with the fact that 
$(1-\eps_n)^{d_n} = 1 - O(\eps_n d_n) \to 1$ implies that
\begin{align}
  \label{sndn}
P'_n(\eps_n d_n) \to 0. 
\end{align}
The two-sided bound
\begin{align*}
1 \geq R_n \geq e^{ - P'_n(\eps_n d_n)} (1-\eps_n)^{d_n}
\end{align*}
then yields $R_n \to 1$, and, together with \eqref{sndn}, shows that 
\begin{align*}
  \frac{P'_n(\eps_n d_n)}{\eps_n d_n} \to \oo\hrho - 1.
\end{align*}
Next, we use \eqref{ppr} for
\begin{align*}
  P_n(1) = P'_n(1) + o(1)  = 
  \eps_n d_n \left\lfloor \frac{\co}{d_n} \right\rfloor
  \frac{P'_n(\eps_n d_n)}{\eps_n d_n} + P'_n(\eps_n r_n) + o(1),
\end{align*}
for some $r_n \in \set{1,\dots, d_n}$. Since 
$0\leq P'_n(\eps_n r_n) \leq P'_n(\eps_n d_n) \to 0$ and 
$\eps_n d_n \lfloor \co/ d_n  \rfloor \to 1$, 
  we have $P_n(1) \to \hrho^{-1} - 1$. On the other hand, $P_n(1)$ is a sum
  of negative log-Laplace transforms of an infinitesimal array, so its
  limit must be a negative log transform of an infinitely divisible
  distribution (see \cite[Corollary 7.13, p.~156]{Kal21}). Therefore,
  Assumption \ref{B1} holds and so does Assumption \ref{A}, by the first
  part of the proof. 
\end{proof}

\subsection{The convolutional Riccati equation}
\label{sec:Riccati}
This section focuses on the properties of the solutions of the
convolutional Riccati equation
\begin{align}
\label{equ:dough}
K = F + \tot K^2*\rho
\end{align}
used throughout the paper. We fix $T\geq 0$ and consider functions
defined on $[0,T]$. Extensions to $\ssil\pos$
are straightforward. We assume that $\rho\in\smp\pos$ satisfies
$\rho(\set{0})=0$, but do not put any other restrictions (such as GID) on it. 

\subsubsection{A stability estimate and a comparison principle} 
The central result of this subsection is a stability estimate 
\eqref{equ:mezzo} in Proposition \ref{pro:tangi} below; 
it will be used in the proof of Theorem \ref{thm:main} in 
Section \ref{sec:proof-main}.

\begin{proposition}
\label{pro:tangi}
Suppose that $M\geq 0$ and $K_i, F_i \in \ssiT$ are such that
$\n{K_i}_{\ssiT} \leq M$ for $i=1,2$ and
\begin{align*}
% \label{equ:abash}
K_i =  F_i + \tot K^2_i*\rho  \efor i=1,2,
\end{align*}
then
\begin{align}
\label{equ:mezzo}
\int_0^T \n{ K_2 - K_1}_{\ssit}\, dt \leq C \int_0^T
\n{F_1 - F_2}_{\ssit}\, dt.
\end{align}
where $C$ depends only on $\rho$, $M$ and $T$. 
\end{proposition}
\begin{proof}
For $t\in \zT$, we define
$m(t) = \n{K_2 - K_1}_{\ssi\zt}$ and
$m_F(t) = \n{F_2 - F_1}_{\ssi\zt}$.
For $s\leq t \leq T$, we have
\begin{align*}
  \abs{K_2(s) - K_1(s)}
&\leq \abs{F_2(s) - F_1(s)} + \tot \int_0^s \abs{K_2^2(s-u) -
K_1^2(s-u)}\, \rho(du) \\
&\leq m_F(s) + M \int_0^s m(s-u)\, \rho(du),
\end{align*}
so that
\begin{align*}
m \leq  m_F + M  (m*\rho) \text{ on } \zT.
\end{align*}
We multiply both sides by $\exp{-\ld \cdot}$ and integrate over $\zT$ to
obtain
\begin{align*}
\int_0^T e^{-\ld t} m(t)\, dt
&\leq \int_0^T e^{-\ld t} m_F(t)\, dt +
M \int_0^T  \int_0^t e^{-\ld(t-u)} m(t-u)\, e^{-\ld u} \rho(du)\, dt\\
&= \int_0^T e^{-\ld t} m_F(t)\, dt +
M \int_0^T \int_0^{T-u}  e^{-\ld s} m(s)\, ds\,
e^{-\ld u} \rho(du)
\\ & \leq \int_0^T e^{-\ld t} m_F(t)\, dt +
M \prn{\int_0^T  e^{-\ld t} m(t)\, dt} \prn{\int_0^T e^{-\ld u}
\rho(du) }
\\ & \leq \int_0^T e^{-\ld t} m_F(t)\, dt +
\prn{M \int_0^T e^{-\ld u} \rho(du) }
\int_0^T  e^{-\ld t} m(t)\, dt.
\end{align*}
Since $\rho(\set{0})=0$, there exists $\ld_0 = \ld_0(\rho, M)\geq 0$ such that
\begin{align*}
M \int_0^T e^{-\ld_0 u} \rho(du) \leq 1/2.
\end{align*}
With such $\ld_0$, we have
\[ e^{-\ld_0 T} \int_0^T m(t)\, dt
\leq \int_0^T e^{-\ld_0 t} m(t)\, dt \leq 2 \int_0^T e^{-\ld_0 t} m_F(t)\,
dt \leq 2 \int_0^T m_F(t)\, dt,\]
which implies \eqref{equ:mezzo} with the constant $2 \exp{\ld_0 T}$.
\end{proof}

Next, we restate a 
general comparison principle 
for Volterra integral equations
from \cite[Theorem 6.1, p~12]{Mil71}, 
specialized to our setting 
and adapted to our notation.  While
it is proved in \cite{Mil71} under the assumption that $\rho$ is absolutely
continuous, the proofs are readily extended to the case of a general $\rho$ with
$\rho(\set{0})=0$. When the density of $\rho$ is, additionally, restricted to
have a power-type singularity at $0$, comparison principles for the
convolutional Riccati equations are available as part of the theory of
fractional differential equations (see, e.g., \cite[Section 42.,
pp.~832]{SamKilMar93}). In the context of rough CIR processes and limits of
Hawkes processes, related comparison results include \cite[Lemma A.3, p.~38]{ElERos19} and
\cite[Section C.2, p.~35]{DurRosSzy23}.

\begin{proposition}
\label{pro:geoty}
Suppose that $F_1, F_2, K_1, K_2\in\ssiT$ are such that
\begin{align*}
K_1 \leq F_1 + \tot K_1^2*\rho \eand  K_2 \geq F_2 + \tot K_2^2*\rho.
\end{align*}
If $F_1 \leq F_2$ and $K_1+K_2 \geq 0$ then $K_1 \leq K_2$.
In particular, \eqref{equ:dough} admits at most one
solution in $\ssiT$.
\end{proposition}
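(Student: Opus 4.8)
The idea is to linearize the quadratic term and then invoke the positivity result of Lemma \ref{lem:unwax}. Set $L := K_2 - K_1 \in \ssiT$; the goal is to show $L \geq 0$ on $[0,T]$. Subtracting the first displayed inequality from the second and using $F_1 \leq F_2$, I would obtain
\begin{align*}
  L = K_2 - K_1 \geq (F_2 - F_1) + \tot\bprn{ K_2^2 - K_1^2 }*\rho \geq \tot\bprn{ K_2^2 - K_1^2 }*\rho \text{ on } \zT.
\end{align*}
(Here the convolution is monotone and linear, so $\tot K_2^2*\rho - \tot K_1^2*\rho = \tot(K_2^2-K_1^2)*\rho$, and $(F_2-F_1)*\text{(nothing)}$ is just $F_2-F_1 \geq 0$ since the inequalities are stated pointwise without convolving $F_i$.)

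Next I would factor $K_2^2 - K_1^2 = (K_2 - K_1)(K_2 + K_1) = Q\,L$, where $Q := K_1 + K_2$. By hypothesis $Q \geq 0$, and clearly $Q \in \ssiT$. Thus the inequality above reads
\begin{align*}
  L \geq \bprn{ \prn{\tot Q}\, L }*\rho \text{ on } \zT,
\end{align*}
which is exactly the hypothesis of Lemma \ref{lem:unwax} with the roles $K \rightsquigarrow L$ and $Q \rightsquigarrow \tot Q \geq 0$ (both in $\ssiT$). Applying that lemma yields $L \geq 0$, i.e., $K_1 \leq K_2$ on $\zT$.

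\textbf{Main obstacle.} There is essentially no serious obstacle once Lemma \ref{lem:unwax} is available; the proof is a two-line reduction. The only point requiring a small amount of care is the sign/linearity bookkeeping in the subtraction step — making sure that subtracting the ``$\leq$'' inequality from the ``$\geq$'' inequality preserves the correct direction and that the term $F_2 - F_1$ enters with a nonnegative sign (it does, since $F_1 \leq F_2$), and that the difference of the convolved quadratic terms factors cleanly as $(\tot Q)L*\rho$ with $Q = K_1+K_2 \geq 0$. After that, the conclusion is immediate from Lemma \ref{lem:unwax}.
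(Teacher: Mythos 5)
Your proof is correct and follows essentially the same route as the paper: set $L = K_2 - K_1$, subtract to get $L \geq \tot(K_2^2-K_1^2)*\rho = (\tot(K_1+K_2)L)*\rho$, and invoke Lemma \ref{lem:unwax} with $Q = \tot(K_1+K_2) \geq 0$. Nothing to add.
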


\subsubsection{The Adomian decomposition}
\label{sec:Ado}
This subsection implements the Adomian decomposition, introduced in
\cite{AdoRac86}, in our particular setting. 
No original content is claimed; the results here are
routine extensions of existing results (see \cite{ElK08} or
\cite{Waz11}), but in a context that does not exactly match
anything we could find in the literature, so we include 
all proofs. 

We start from an infinite triangular system of convolutional equations:
\begin{equation}
\label{equ:system}
\begin{split}
K_0 &= B, \quad \\
K_1 &= F+ \prn{  K_0 K_1}*\rho \\
K_n &=  \prn{ \ot \sum_{i=0}^{n}    K_i  K_{n-i}}*\rho
\efor n> 2.
\end{split}
\end{equation}
in the unknown functions $\seqz{ K_n}$, where $B, F\in \ssiT$.

\begin{lemma}
\label{lem:sys-sol}
Suppose that $\n{B}_{\ssiT}<1$.
Then the system \eqref{equ:system} has a unique
solution in $(\ssiT)^{\N_0}$, denoted by
$\seqz{K_n[B,F]}$. Moreover, there exists a universal constant $C$ such that
\begin{align}
\label{equ:hen-bounds}
\n{ K_n[B,F]}_{\ssiT} \leq  C  n^{-3/2} \prn{\frac{ 2\n{F}_{\ssiT}
  }{(1-\n{B}_{\ssiT}
)^{2}}}^n  \eforall n\in\N.
\end{align}
\end{lemma}

\begin{proof}
We observe that for $n\geq 1$, the $n$-th equation in the
system \eqref{equ:system} can be written in the form
\begin{align}
\label{equ:tHn}
K_n = F_n +  \prn{B  K_n }*\rho, \ewhere
F_n
=
\begin{cases} F, & n=1, \\ \tot \sum_{i=1}^{n-1}
  (K_i K_{n-i})*\rho, & n\geq 2.
\end{cases}
\end{align}
We also observe that
$F_n$ does not involve $K_n$ or any $K_m$ with $m>n$.

Since $\n{B}_{\ssiT}<1$, it follows inductively
from a standard argument based
on Banach's fixed point theorem that the system
\eqref{equ:system} has a unique solution $\seqz{K_n[B,F]}$
in $(\ssiT)^{\N_0}$ and that its solution satisfies the following estimate
\begin{align*}
\n{ K_n[B,F]}_{\ssiT} \leq M   \n{  F_n}_{\ssiT} \ewhere
M=(1-\n{B}_{\ssiT})^{-1}.
\end{align*}
This further implies that
$\n{ K_1[B,F]}_{\ssiT} \leq M \n{F}_{\ssiT}$ and that
\begin{align}
\label{equ:hdn-Cat}
\n{ K_n[B,F]}_{\ssiT}  \leq
\tot M  \sum_{i=1}^{n-1} \n{ K_i[B,F]}_{\ssiT} \n{ K_{n-i}[B,F]}_{\ssiT} \efor
n\geq 2.
\end{align}
If $F=0$ then $K_n[B,F]=0$ for all $n\geq 1$. Otherwise, we set
\begin{align*}
c_{n} =  \frac{2^{n-1}\n{K_n[B,F]}_{\ssiT}}{ M^{2n-1} \n{F}_{\ssiT}^n}, \efor
n \geq 1,
\end{align*}
so that, by \eqref{equ:hdn-Cat},
\[ c_1 \leq 1 \eand c_n \leq \sum_{i=1}^{n-1} c_i c_{n-i} \efor n\geq
2.
\]
We recall that
the sequence $\seqz{C_n}$ of Catalan numbers satisfies
(see \cite[eq.~(1.2), p.~3]{Rom15})
the recurrence
relation
\begin{align*}
C_n = \sum_{k=1}^n C_{k-1} C_{n-k},\ C_0 = 1.
\end{align*}
Hence, by induction, $c_n \leq C_{n-1}$, for all $n \geq 1$ and so,
the standard asymptotics $
C_n \sim \pi^{-1/2} 4^{n} n^{-3/2} $
for Catalan numbers (see e.g. \cite[Theorem 3.1,
p.~15]{Rom15})
implies that $c_n \leq   C 4^n n^{-3/2}$ for some universal constant $C$,
which, in turn,  yields \eqref{equ:hen-bounds}. 
\end{proof}
\begin{lemma}
\label{lem:exp-eps}
Let $B,F\in \ssiT$ be such that
$\n{B}_{\ssiT}<1$ and $\n{F}_{\ssiT}\leq
\tot(1-\n{B}_{\ssiT})^{2}$, and let
$\seqz{K_n[B,F]} \in (\ssiT)^{\N_0}$
be the
unique solution to  the system \eqref{equ:system}. Then
the series $\tsum_{n\geq 0} K_n[B,F]$
converges absolutely in $\ssiT$ and its sum
\begin{align*}
K[B,F]:=\tsum_{n\geq  0} K_n[B,F]
\end{align*}
satisfies the equation
\begin{align}
\label{equ:frowl}
K[B,F] = F +  \tot (K[B,F])^2*\rho + B - \tot B^2*\rho
\end{align}
\end{lemma}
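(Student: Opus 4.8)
The plan is to read off the absolute convergence from the quantitative estimate \eqref{equ:hen-bounds} of Proposition~\ref{pro:sys-sol}, and then to derive \eqref{equ:frowl} by summing the equations of the system \eqref{equ:system} over $n\geq 1$ and recognizing the quadratic terms as a Cauchy product in the Banach algebra $\ssiT$.

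First I would verify the convergence. Under the hypothesis $\n{F}_{\ssiT}\leq\tot(1-\n{B}_{\ssiT})^{2}$ the base appearing in \eqref{equ:hen-bounds}, namely $2\n{F}_{\ssiT}/(1-\n{B}_{\ssiT})^{2}$, is at most $1$, so $\n{K_n[B,F]}_{\ssiT}\leq C\,n^{-3/2}$ for all $n\geq 1$; since $\tsum_{n\geq 1}n^{-3/2}<\infty$, the series $\tsum_{n\geq 0}K_n[B,F]$ converges absolutely in $\ssiT$. (This borderline summable bound is precisely what the hypothesis on $\n{F}_{\ssiT}$ buys us.) Write $K:=K[B,F]$ for its sum in $\ssiT$.

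For the identity, I would first record that, for $n=1$, the equation in \eqref{equ:system} reads $K_1=F+(K_0K_1)*\rho$ and, for $n\geq 2$, it reads $K_n=\bprn{\tot\tsum_{i=0}^{n}K_iK_{n-i}}*\rho$; both can be written uniformly as $K_n=\inds{n=1}F+\bprn{\tot\tsum_{i=0}^{n}K_iK_{n-i}}*\rho$, since $\tot\tsum_{i=0}^{1}K_iK_{1-i}=K_0K_1$ by symmetry. Because $\ssiT$ with the supremum norm is a Banach algebra under pointwise multiplication and $\tsum_n K_n$ converges absolutely, the Cauchy product formula gives $\tsum_{n\geq 0}\tsum_{i=0}^{n}K_iK_{n-i}=K^2$ in $\ssiT$; subtracting the $n=0$ term $K_0^2=B^2$ leaves $\tsum_{n\geq 1}\tsum_{i=0}^{n}K_iK_{n-i}=K^2-B^2$. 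Since $h\mapsto h*\rho$ is a contraction of $\ssiT$ (as $\rho$ is a probability measure, $\n{h*\rho}_{\ssiT}\leq\n{h}_{\ssiT}$), hence a bounded linear operator, it may be exchanged with the absolutely convergent sum, and summing the uniform equation over $n\geq 1$ yields
\[ \tsum_{n\geq 1}K_n = F + \tot\,(K^2-B^2)*\rho. \]
Finally, since $\tsum_{n\geq 1}K_n=K-K_0=K-B$, rearranging this gives precisely \eqref{equ:frowl}.

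The argument is essentially bookkeeping; the only points that deserve a line of justification are the standard facts that $\ssiT$ is a Banach algebra, that a Cauchy product of an absolutely convergent series with itself sums to the square of the sum, and that convolution against $\rho$ commutes with an absolutely convergent sum — together with keeping track of the boundary indices $n=0,1$, through which the $F$-term and the $B^2$-term enter. I do not expect any genuine obstacle beyond this.
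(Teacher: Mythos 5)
Your proposal is correct and follows essentially the same route as the paper's own proof: both obtain the $n^{-3/2}$ bound from Proposition~\ref{pro:sys-sol} to get absolute convergence, and both identify the Cauchy product of $\tsum K_n$ with itself term-by-term against the system \eqref{equ:system}, the only cosmetic difference being that you sum the equations and expand $K^2$ afterwards, while the paper starts from $\tot K^2*\rho$ and works backwards. The extra remarks you flag (Banach-algebra structure of $\ssiT$, boundedness of $h\mapsto h*\rho$ allowing interchange with the sum, and the bookkeeping at $n=0,1$) are exactly the implicit steps in the paper's computation.
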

\begin{proof}
Let $K=K[B,F]$ and $K_n = K_n[B,F]$, for $n\in \N$. 
Lemma \ref{lem:sys-sol} and
the assumption on the size of $F$ imply that 
\[ \n{K_n}_{\ssiT} \leq C n^{-3/2} \efor n\in\N.\]
This, in turn, implies that
the series $\tsum_{n\geq 0} K_n$
converges absolutely in $\ssiT$.
Moreover,
\begin{align*}
\tot K^2*\rho &= \tot \Bprn{ \tsum_{n\geq 0} K_n }^2*\rho =
\tot K_0^2*\rho + (K_0 K_1)*\rho +
\tsum_{n\geq 2} \bprn{\tot \tsum_{i=0}^n K_i K_{n-i}}*\rho \\
&= \tot B^2 * \rho + (K_1 - F) + \tsum_{n\geq 2} K_n
= \tot B^2*\rho - B + K - F. \qedhere
\end{align*}
\end{proof}

\begin{lemma}
\label{lem:palma}
Suppose that $F\in\ssiT $ satisfies $\abs{F}\leq 1/2$ and that
$K$ solves \eqref{equ:dough}. Then
\begin{align}
\label{equ:floss}
- \n{F}_{\ssit} \leq K(t) \leq 1-\sqrt{1-2
\n{F}_{\ssit}} \eforall t\in [0,T].
\end{align}
\end{lemma}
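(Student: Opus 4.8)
The plan is to prove the two inequalities separately: the lower bound follows by inspection, and the upper bound from the comparison principle of Proposition~\ref{pro:geoty}, applied against a carefully chosen supersolution. For the lower bound, observe that $\rho$ is a positive measure, so $\tot K^2*\rho\ge 0$ pointwise; hence \eqref{equ:dough} gives, for every $t\in\zT$,
\[
 K(t)=F(t)+(\tot K^2*\rho)(t)\ge F(t)\ge -\abs{F(t)}\ge -\n{F}_{\ssit}.
\]

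For the upper bound, I would set $\phi(t):=1-\sqrt{1-2\n{F}_{\ssit}}$. This is well defined because $\n{F}_{\ssit}\le\n{F}_{\ssiT}\le 1/2$; it takes values in $[0,1]$, and since $t\mapsto\n{F}_{\ssit}$ is nondecreasing and $x\mapsto 1-\sqrt{1-2x}$ is increasing on $[0,1/2]$, the function $\phi$ is nondecreasing and measurable, so $\phi\in\ssiT$. By construction $\phi(t)$ is the smaller root of $\tot y^2-y+\n{F}_{\ssit}=0$, i.e., $\phi(t)=\n{F}_{\ssit}+\tot\phi(t)^2$. Using that $\phi$ is nondecreasing and that $\rho(\set{0})=0$ makes $\rho([0,\cdot])$ a genuine distribution function bounded by $1$, one gets
\[
 (\tot\phi^2*\rho)(t)=\int_{[0,t]}\tot\phi(t-s)^2\,\rho(ds)\le\tot\phi(t)^2\,\rho([0,t])\le\tot\phi(t)^2,
\]
so $F(t)+(\tot\phi^2*\rho)(t)\le\n{F}_{\ssit}+\tot\phi(t)^2=\phi(t)$; that is, $\phi$ is a supersolution of \eqref{equ:dough} with the same inhomogeneity $F$.

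It then remains to verify the sign hypothesis of Proposition~\ref{pro:geoty} with $F_1=F_2=F$, $K_1=K$, $K_2=\phi$ (the required differential inequality for $K_1=K$ holds with equality since $K$ solves \eqref{equ:dough}). From the lower bound, $K(t)\ge -\n{F}_{\ssit}$, while the elementary estimate $1-\sqrt{1-2x}\ge x$ for $x\in[0,1/2]$ (which, after rearranging to $1-x\ge\sqrt{1-2x}$ and squaring, reduces to $x^2\ge 0$) gives $\phi(t)\ge\n{F}_{\ssit}$; hence $K+\phi\ge 0$ on $\zT$. Proposition~\ref{pro:geoty} then yields $K\le\phi$ on $\zT$, which is exactly the claimed upper bound. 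I expect no real obstacle here; the only point needing a little care is packaging $\phi$ as an element of $\ssiT$ that is genuinely a supersolution, which is handled by the monotonicity of $t\mapsto\n{F}_{\ssit}$ and the hypothesis $\rho(\set{0})=0$.
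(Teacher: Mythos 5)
Your proof is correct and follows essentially the same route as the paper's: the lower bound via $K\ge F$, and the upper bound by exhibiting the same supersolution $\phi(t)=1-\sqrt{1-2\n{F}_{\ssit}}$ (using its monotonicity to get $\tot\phi^2*\rho\le\tot\phi^2$) and invoking Proposition~\ref{pro:geoty} after checking $K+\phi\ge 0$. One small inaccuracy worth noting: the hypothesis $\rho(\set{0})=0$ is not actually used in your estimate $\tot\phi^2*\rho\le\tot\phi^2$—only the facts that $\rho$ is a probability measure and $\phi$ is nondecreasing and nonnegative are needed.
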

\begin{proof}
To get the lower bound, we simply observe that $K \geq F$.
For the upper bound, we define
\begin{align*}
K_2(t) = 1-\sqrt{1 - 2\n{F}_{\ssit}}.
\end{align*}
Since $K_2$ is nonnegative and nondecreasing, we have $K_2^2*\rho
\leq K_2^2$, and, so,
\begin{align*}
K_2(t) - \tot  (K_2^2*\rho)(t)  & \geq K_2(t) - \tot K^2_2(t) =
\n{F}_{\ssit} \geq F(t)
\end{align*}
We have
\begin{align*}
(K + K_2)(t) &\geq - \n{F}_{\ssit} + 1 - \sqrt{1-2 \n{F}_{\ssit}}
= \tot \prn{ 1-\sqrt{1-2\n{F}_{\ssit}} }^2 \geq 0,
\end{align*}
which allows us to use
Proposition \ref{pro:geoty} above with $K_1 = K$ and $F_1 = F_2 = F$
to conclude that $K\leq K_2$.
\end{proof}
\begin{proposition}
\label{pro:equation}
If $\n{F}_{\ssiT}\leq 1/2$ the function
\begin{align}
\label{equ:ser}
K[F] = \tsum_{n\geq 1} K_n[0,F]
\end{align}
defines the unique solution of \eqref{equ:dough} in $\ssiT$.
Moreover, if $G\in \ssiT$ and $\eps\in\R$ are such that
$\n{F}_{\ssiT}+ \abs{\eps} \abs{G}_{\ssiT} \leq 1/2$ we have
\begin{align*}
K[F + \eps G] = K[F]+\tsum_{n\geq 1} \eps^n K_n[ K[F], G].
\end{align*}
with absolute convergence in $\ssiT$ .
\end{proposition}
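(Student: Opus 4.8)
The plan is to reduce both assertions to the series-representation Lemma \ref{lem:exp-eps}, the uniqueness clause of Proposition \ref{pro:tangi}, and the degree-$n$ homogeneity of the system \eqref{equ:system}. First, that $K[F]$ solves the Riccati equation \eqref{equ:dough}: apply Lemma \ref{lem:exp-eps} with $B=0$ and the given $F$. Since $\n{F}_{\ssiT}\le \tot(1-0)^2$, the series $\tsum_{n\ge 0}K_n[0,F]$ converges absolutely in $\ssiT$ and its sum satisfies \eqref{equ:frowl}, which for $B=0$ reduces to $K[0,F]=F+\tot(K[0,F])^2*\rho$. The first line of \eqref{equ:system} gives $K_0[0,F]=0$, so this sum equals $K[F]=\tsum_{n\ge1}K_n[0,F]$; hence $K[F]$ solves \eqref{equ:dough}. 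Uniqueness in $\ssiT$ is the final assertion of Proposition \ref{pro:tangi} (apply its estimate with $F_1=F_2=F$ and $M$ the larger of the sup-norms of two competing solutions).

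For the expansion around $K[F]$, I would set $B:=K[F]$ and run Lemma \ref{lem:exp-eps} with this $B$ and with $\eps G$ playing the role of $F$. The hypotheses need checking: Lemma \ref{lem:palma} gives $\n{K[F]}_{\ssiT}\le 1-\sqrt{1-2\n{F}_{\ssiT}}$, hence $(1-\n{B}_{\ssiT})^{2}\ge 1-2\n{F}_{\ssiT}$, and combining this with the standing bound $\abs{\eps}\,\n{G}_{\ssiT}\le\tot-\n{F}_{\ssiT}$ yields $\n{\eps G}_{\ssiT}\le\tot(1-\n{B}_{\ssiT})^{2}$, while $\n{B}_{\ssiT}<1$ provided $\n{F}_{\ssiT}<1/2$ (the boundary case $\n{F}_{\ssiT}=1/2$ forces $\eps G\equiv 0$ on $[0,T]$, making the asserted identity trivial). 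Lemma \ref{lem:exp-eps} then produces $\tK:=\tsum_{n\ge 0}K_n[B,\eps G]$ satisfying
\[
  \tK = \eps G + \tot\, \tK^{2}*\rho + B - \tot\, B^{2}*\rho .
\]
By the first part, $B=K[F]$ satisfies $B-\tot B^{2}*\rho=F$, so the displayed equation becomes $\tK=(F+\eps G)+\tot\,\tK^{2}*\rho$; since $\n{F+\eps G}_{\ssiT}\le 1/2$, the uniqueness invoked above forces $\tK=K[F+\eps G]$.

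It remains to identify the summands. From \eqref{equ:system}, $K_0[B,\eps G]=B=K[F]$. For $n\ge 1$ one proves the scaling identity $K_n[B,\eps G]=\eps^{n}K_n[B,G]$ by induction on $n$, using the equivalent form \eqref{equ:tHn} of the equations: the inhomogeneous term $F_n$ there is a $\rho$-convolution of products $K_iK_{n-i}$ with $i+(n-i)=n$, while the self-interaction $(BK_n)*\rho$ is homogeneous of degree one in $K_n$, and at each step uniqueness for a linear convolution equation (Lemma \ref{lem:linear}) pins down the solution. Substituting into $K[F+\eps G]=\tsum_{n\ge 0}K_n[B,\eps G]$ then gives
\[
  K[F+\eps G]=K[F]+\tsum_{n\ge 1}\eps^{n}K_n[K[F],G],
\]
the series converging absolutely in $\ssiT$ by Lemma \ref{lem:exp-eps} (equivalently, by \eqref{equ:hen-bounds}, since $2\abs{\eps}\,\n{G}_{\ssiT}/(1-\n{B}_{\ssiT})^{2}\le 1$).

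The step I expect to be the main obstacle is the bookkeeping in the hypothesis check of Lemma \ref{lem:exp-eps} for $B=K[F]$: extracting the sharp bound $\n{K[F]}_{\ssiT}\le 1-\sqrt{1-2\n{F}_{\ssiT}}$ from Lemma \ref{lem:palma} and verifying that it dovetails with $\abs{\eps}\,\n{G}_{\ssiT}\le\tot-\n{F}_{\ssiT}$, together with cleanly disposing of the degenerate boundary case. Everything else is the algebraic cancellation $B-\tot B^{2}*\rho=F$ and the degree-$n$ homogeneity of \eqref{equ:system}.
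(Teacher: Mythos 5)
Your proof matches the paper's argument essentially step for step: apply Lemma \ref{lem:exp-eps} with $B=0$ for existence, Proposition \ref{pro:tangi} for uniqueness, then set $B=K[F]$, use Lemma \ref{lem:palma} to verify the hypotheses of Lemma \ref{lem:exp-eps}, cancel via $B-\tot B^2*\rho=F$, and finish by the degree-$n$ homogeneity $K_n[B,\eps G]=\eps^n K_n[B,G]$. Your explicit handling of the boundary case $\n{F}_{\ssiT}=1/2$ (where $\eps G\equiv 0$, so $\n{B}_{\ssiT}<1$ is not guaranteed but the identity is trivial) is a small tidying-up that the paper leaves implicit.
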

\begin{proof}
In the special case $B=0$, the conditions of
Lemma \ref{lem:exp-eps} are satisfied as soon as
$\n{F}_{\ssiT}\leq 1/2$. Therefore \eqref{equ:ser}
defines a solution to \eqref{equ:dough}. Uniqueness is the
content of Proposition \ref{pro:tangi} above.

\smallskip

Let $F$, $G$ and $\eps$ be as in the second part of the statement.
Thanks to
Lemma \ref{lem:palma} above, we have
\begin{align*}
\n{K[F]}_{\ssiT} \leq \max\prn{ \n{F}_{\ssiT},  1- \sqrt{1-2
\n{F}_{\ssiT}} } = 1 -\sqrt{1-2 \n{F}_{\ssiT}},
\end{align*}
so that
\begin{align*}
\tot (1-\n{K[F]}_{\ssiT})^2 \geq 1/2-\n{F}_{\ssiT} \geq
\n{ \eps G}_{\ssiT},
\end{align*}
which is exactly what is required for Lemma \ref{lem:exp-eps} to
apply. Therefore,
\begin{align*}
K[F+\eps G] = K[F] + \tsum_{n\geq 1} K_n[ K[F],\eps G].
\end{align*}
It remains to observe that functions $\eps^{-n}
K_n[K[F], \eps G]$ solve the system \eqref{equ:system} for any
$\eps\ne 0$, so that, by uniqueness, we have
\begin{equation*} K_n[
K[F], \eps G] = \eps^n K_n[ K[F], G] \efor \eps \in \R. \qedhere
\end{equation*}
\end{proof}

\begin{proof}[Proof of Proposition \ref{pro:agist}]
To obtain \ref{ite:partial}, we simply combine the representation
of \eqref{equ:char-xi} and \eqref{equ:h-eq} with Proposition
\ref{pro:equation} above, with $F=h[f_0]$
and $G=f$. The assertion in \ref{ite:cumulants} is a special case
of \ref{ite:partial} with $F=h[0]=0$ and $G=f$. In that case, the
system \eqref{equ:pre-system} simplifies and turns into the recursive
definition given in \eqref{equ:cocco}.
\end{proof}

\subsection{Auxiliary facts about Hawkes processes}
\label{sec:aux-Hawkes}
This section introduces partial Hawkes processes and 
collects several facts 
that will be used in the proof
of Theorem \ref{thm:main} in the Section \ref{sec:proof-main}.
The notation is inherited from the
beginning of Section \ref{sec:main}.

\subsubsection{Partial Hawkes processes}
\label{sec:parH}
The parameters $a$ and $\seqm{\upm{\pi}}$ of a single-progenitor
Hawkes process can be used to construct
a double sequence of \define{partial single-progenitor Hawkes
processes}
$\tH^{[m,m+k)}$, $m\in\N_0$, $k\in\N$ which will be needed in the sequel.
The process $\tH^{ [m,m+k )}$ starts with a single individual in
generation $m\in \N_0$, and accumulates individuals over the next $k-1$ generations.
This construction is distributionally equivalent to collecting the first $k$ generations
of a single-progenitor process with parameters $a$ and
$(\pi^{m+1}, \pi^{m+2},\dots)$.
These individual generations are denoted by $\tH^{m,(m+j)}$,
$j=0,\dots, k-1$ so that
$\tH^{[m,m+k)} = \bigcup_{j=0}^{k-1} \tH^{m,(m+j)}$.

For $k\geq 1$, conditioning on the first generation $\tH^{m, (m+1)}$
of $\tH^m$
gives the following fundamental recursive distributional equality:
\begin{align}
\label{equ:tH-in-law}
\tH^{[m,m+k)} \eqd \set{0} \cup \bigcup_{T\in \utP{m+1}  }
\prn{ T+\tH^{[m+1, m+k)}(T) } \efor k\geq 1,
\end{align}
where $\utP{m+1}$ is a Poisson process with intensity
$a \up{\pi}{m+1}$, and $(\tH^{[m+1,m+k)}(T))_{T\in \tP^{m+1}}$
are independent partial single-progenitor Hawkes processes.
We accumulate over all $k$ in \eqref{equ:tH-in-law} to  obtain
\begin{align}
\label{equ:tH-in-law-lim}
\tH^{m} \eqd \set{0} \cup \bigcup_{T\in \utP{m+1}  }
\prn{ T+\tH^{m+1}(T) }.
\end{align}

The \define{convolutional moment-generating functional} $\mgf_{\xi}$
is  given by
\begin{align}
\label{equ:mgf-fxi}
\mgf_{\xi}[f] (t) = \ee{ \Exp{ (f*\xi)(t)}} \in [0,\infty]
\efor
f\in \ssil\pos, \, t \in \pos.
\end{align}
This version of the standard moment-generating
functional
proves to be easier to work with in the context of Hawkes processes
than its classical counterpart.
We note that, unlike in the standard case, the functional $\mgf_{\xi}$
depends on the parameter $t$. While this dependence does
not encode any additional information (it simply shifts the function
$f$), it leads to significantly simpler notation in the sequel.

\smallskip

Lemma \ref{lem:albee}  below provides 
the fundamental recursive relation
between convolutional moment-generating functionals of partial Hawkes processes. 
% admits the following simple expression:
% \begin{align}
% \label{equ:Poiss-gen}
% \gf_P[f] = 
% \end{align}
%
% \begin{align*}
% \mgf_P[f]  = \Exp{(\exp{f}-1)*\mu} \eforall f\in  \ssil,
% \end{align*}
% and that its
% \define{convolutional probability-generating functional}
\begin{lemma}
\label{lem:albee}
Let $f\in \ssil\pos$ 
and $m\in\N_0$ be given. Then $ \mgf_{\tH^{[m,m)}}[f] = 1$ and
\begin{align}
\label{equ:rec-hmk}
  \mgf_{\tH^{[m,m+k)}}[f] =
  \exp{f+ a (\mgf_{\tH^{[m+1,m+k)}}[f]-1)* \upi{m+1}} \efor k\geq 1.
\end{align}
% \begin{proof}
% The equality $\mgf_{\tH^{[m,m)}}[f]= 1 $ follows trivially from the definition.
% Assuming that $k\geq 1$ and conditioning on generation $m+1$ in
% \eqref{equ:tH-in-law}, we get
% \begin{align*}
% \mgf_{\tH^{[m,m+k)}}[f](t)
% \hspace{-3em} & \hspace{3em} =
% \ee{\Exp{  f(t) +  \tsum_{T\in \utP{m+1}}\tsum_{S\in \tH^{[m+1,m+k)}(T)}
% f(t-T-S)}} \\
% & = e^{ f(t)} \ee{  \tprod_{T\in \utP{m+1}}
%   \ee{ \Exp{  \tsum_{S\in \tH^{[m+1,m+k)}(T)}
% f(t-T-S)} \giv \sg(\utP{m+1})}} \\
% & =e^{ f(t)}
% \ee{ \tprod_{T\in \utP{m+1}}  \mgf_{\tH^{[m+1,m+k)}}[f] (t-T)   } \\
% & = e^{f(t)} \gf_{\utP{m+1}} [\mgf_{\tH^{[m+1,m+k)}}](t) \\
% &= \exp{  f(t) + a \int \prn{\mgf_{\tH^{[m+1,m+k)}}[f](t-s)-1}\, \upi{m+1}(ds)},
% \end{align*}
% where the last equality follows from \eqref{equ:Poiss-gen}.
% \end{proof}
If, additionally, $f\leq a-1-\log(a)$
then
\begin{align}
\label{equ:hmk-bnd}
\mgf_{\tH^{[m,m+k)}}[f] \leq \oo{a} \eforall  k\in\N 
\quad \eand \quad
\mgf_{\tH^{m}}[f] \leq \oo{a} 
\end{align}
as well as
\begin{align}
\label{equ:hm-e}
\mgf_{\tH^m}[f] = \exp{f + a  \prn{\mgf_{\tH^{m+1}}[f]-1 }*\pi^{m+1}
}.
\end{align}
\end{lemma}
\begin{proof}
  The equality \eqref{equ:rec-hmk} follows by conditioning on the $m$-th 
  generation and 
 using the well-known fact that for a Poisson process $P$ with intensity
measure $\mu \in \smlf\pos$  and a positive $g \in \ssil\pos$ 
bounded away from $0$, we have
\begin{align}
  \label{agora}
\ee{ \tprod_{T\in P, T\leq t}  g(t-T)} = \Exp{ ((g-1)*\mu)(t) } \eforall t \geq 0.
\end{align}
The inequalities in \eqref{equ:hmk-bnd} follow inductively from \eqref{equ:rec-hmk} and 
allow 
us to pass to the limit  as $k \to \infty$ in
\eqref{equ:rec-hmk}
to obtain \eqref{equ:hm-e}. 
\end{proof}
The recursive structure of the Hawkes process can be used to derive an
expression for its moments. For $f\in \ssil\pos$, we let
\begin{align*}
\te^{[m,m+k)}[f] = \ee{ f*\tH^{[m,m+k)}} \eand \te^m[f] = \ee{ f*\tH^m},
\efor m\in\N_0, k\in\N.
\end{align*}
If we set 
$\te^{[m,m)}[f] := f$ for $m \in\N$, 
the relation \eqref{equ:tH-in-law} implies that
\begin{align}
\label{equ:equ-mmz}
\te^{[m,m+k)}[f] = f + a\,
\te^{[m+1,m+k)}*\pi^{m+1} \efor k\geq 1.
\end{align}
Therefore,
\begin{align*}
\te^{[m,m+k)}[f] = f* \prn{ \tsum_{j=0}^{k} a^j \pi^{(m,m+j]} }
\end{align*}
where
\begin{align}
\label{equ:Pimj}
\pi^{(m,m+j]} :=
\begin{cases}
\dl_0, & j=0, \\
\pi^{m+1}*\dots*\pi^{m+j}, &  j> 0.\\
\end{cases}
\end{align}
Since $a<1$ and $\ns{\pi^{(m,m+j]}}=1$ for all $m,j\in\N_0$,
we have convergence in
total variation in
\begin{align}
\label{equ:def-rhom}
\rho^m := \sum_{k=0}^{\infty} (1-a) a^k \pi^{(m,m+k]} \in \smp\pos,
\end{align}
and the following identity holds
\begin{align}
\label{equ:mm-by-rhom}
\te^m[f] = \oo{1-a} f*\rho^m \efor f\in \ssil\pos, m\in\N_0.
\end{align}

In the case of a Hawkes process $H$ with background intensity measure $\mu$,
we have the following immediate continuation of Lemma
\ref{lem:albee}. The proof uses \eqref{agora} and follows the same pattern
as the proof of Lemma \ref{lem:albee} so we omit it. 
\begin{lemma}
\label{lem:agnel}
Given $f\in \ssil\pos$ with $f\leq a-1-\log(a)$,
we have $\mgf_{H}[f]\in \ssi$ and
\begin{align}
\label{equ:sL-H-h}
\mgf_{H}[f]=\exp{\big. (\mgf_{\tH}[f]-1) * \mu }.
\end{align}
\end{lemma}
% \begin{proof}
% The defining relation \eqref{equ:Hawkes} implies that
% \begin{align*}
% \mgf_H[f](t)
% & = \ee{\exp{ f * H(t)}} = \ee{ \exp{  \tsum_{T\in P} \tsum_{S \in \tH_T}
% f(t-(T+S))}}                                                                \\
% & =\ee{\ee{ \tprod_{T\in P} \exp{  \tsum_{S\in \tH_T} f((t-T) - S)} \giv
% \sg(P)}}                                                                 \\
% & = \ee{ \tprod_{T\in P} \mgf_{\tH}[f](t-T)} = \gf_{P}[
% \mgf_{\tH}[f]](t)
% = \exp{\Big. \prn{\prn{\mgf_{\tH}[f]-1}*\mu}(t)},
% \end{align*}
% where the last equality follows from \eqref{equ:Poiss-gen} and we use
% our standard convention that the functions $f$ and $\mgf_{\tH}[f]-1$
% take the value $0$ for $t<0$.
% \end{proof}

A well-known consequence of Lemma \ref{lem:agnel} above is the
following expression for the first moment
$e[f] = \ee*{f*H}$ of the Hawkes process which we record for later use:
\begin{align}
\label{equ:m1-equ}
e[f] = \te[f]*\mu = \oo{1-a} f * ( \rho *\mu ).
\end{align}
where $\rho=\rho^0$ and $\rho^0$ is given by \eqref{equ:def-rhom}.

\subsubsection{Asymptotics for the total number of points}
Let $\tH$ denote a single-progenitor Hawkes process. We are only
interested in the total number of points $\abs*{\tH}$ of $\tH$ here so
the excitation profile $\seqm{\pi^m}$ plays no role. Indeed, $\abs*{\tH}$ is the
same as the total number of points of a
Bienaym\' e-Galton-Watson process with the Poisson offspring
distribution with parameter $a$.
We start with a lemma where
$W_0:[-e^{-1},\infty) \to \R$ denotes
the principal branch of Lambert's $W$ function (see, e.g.
\cdg{4.13}{Section 4.13}).
\begin{lemma}
\label{lem:flusk}
For $\be \in \R $, let $l(\be) := \ee*{\exp*{ \be \ns{\tH}}} \in
(0,\infty]$ be the
moment-generating function of the total number $\ns{\tH}$ of points
in $\tH$.  Then
\begin{align}
\label{equ:flusk}
l(\be) =
\begin{cases} + \infty, &  \be > a-1-\log(a),\\
  -\frac{1}{a} W_0(-\exp{ \beta -a + \log(a)}) , & \be \leq  a -1 - \log(a).
\end{cases}
\end{align}
\end{lemma}
\begin{proof}
Virtually the same conditioning argument leading to Lemma
\ref{lem:albee} above can be used  to  show that
the functions
$l_k(\be) = \ee{\exp{ \be \ns{\tH^{[0,k)}}}}$, $k\in\N_0$ satisfy
\begin{align*}
l_k(\be) = \exp{ \be + a (l_{k-1}(\be)-1)} \efor k\in\N.
\end{align*}
Assuming, first, that $\be \geq 0$, the fact that
$\ns{\tH}$ is the nondecreasing limit of $\ns{\tH^{[0,k)}}$, as
$k\to\infty$ implies that $l_k(\be) \upto l(\be)$.
Since $l_0(\be)=1$,  it follows that $l(\be)$ is the smallest
fixed point above $1$,  if one exists,  of the function $F(x) = \exp{\beta + a
(x-1)}$;
otherwise, $l(\be)=+\infty$. The latter case occurs, in particular, when
$\beta>a-1-\log(a)$, as can be easily seen by inspection.
For $\beta \leq a-1-\log(a)$,
the equation $F(x)=x$ transforms into
\begin{align}
\label{equ:Lambert-equ}
(-ax) \exp{-ax}   = -a \exp{\be-a},
\end{align}
with solutions given by
\begin{align*}
x_0 = -\oo{a} W_0( - \exp{-a+\be + \log(a)}) \eand
x_{- 1} = -\oo{a} W_{- 1}( - \exp{-a+\be + \log(a)}),
\end{align*}
where $W_0$ and $W_{- 1}$ are the
two branches of the Lambert's $W$ function on $[-e^{-1},0)$.
The
principal solution $W_0$ is increasing and $W_{- 1}$ is decreasing on
$(-e^{-1},0)$, while
$W_0(-e^{-1}) = W_{- 1}(-e^{-1}) = -1$;
this is easily seen directly, but we also refer the
reader to \cdg{4.13}{Section 4.13} for a more comprehensive treatment of the
$W$ function.
It follows that the smallest solution of $F(x)=x$ above $x=1$ is given by
$x_0$ defined in \eqref{equ:Lambert-equ} above, which completes the proof of
\eqref{equ:flusk}.

The case $\beta<0$ is almost identical, with the distinction that
the sequence $\seqk{l_k(\be)}$ is
now nonincreasing and bounded from above by $1$, so we are looking
for the largest
fixed point of $F$ below $1$. Since $F(1) < 1$ in this case, $l_0(\be)=1$ is
located between the two solutions of $F(x)=x$ which leads us to choose the
principal branch $W_0$ again.
\end{proof}

\begin{proposition}
\label{pro:dhava}
Let $\seq{\tH_n}$ be a sequence of single-progenitor Hawkes processes with
parameters $\seq{a_n}$, and suppose that $\eps_n \downto 0$, where
$\eps_n = 1- a_n$.
For $\dl \in [0,1)$ we have
\begin{align}
\label{equ:tail-0}
\lim_n \oo{\eps_n} \prn{ \ee{\exp{ \eps_n^2 \drtr \ahnn}}-1} =
1-\sqrt{\dl},
\end{align}
and, for $\dl \in (0,1)$,
\begin{align}
\label{equ:tail-1}
\lim_{n}
\eps_n^{2k-1} \ee{ \ahnn^k \exp{ \eps^2_n \drtr \ahnn }}  =
\frac{(k-1)!}{2^{k-1}} \binom{2(k-1)}{k-1} \delta^{1/2-k} \efor k\in\N.
\end{align}
\end{proposition}
\begin{proof}
Let $w(x) = -W_0(-e^{-1-x^2})$ for $x \geq 0$,  where $W_0$ is the principal
branch of Lambert $W$ function.
% ; see \cdg{4.13}{Section 4.13} for the
% standard properties used in this proof.
Lemma \ref{lem:flusk} above states 
that 
\begin{align}
\label{equ:mgf-W}
\ee{ \exp{ \beta \ahnn}} = \oo{1-\eps_n} w(b_n(\be)) \ewhere b_n(\be)
=  (\beta^{max}_n - \be)^{1/2},
\end{align}
for $\be < \beta^{max}_n := - \log(1-\eps_n) - \eps_n$.
We have
\[ b_n\prn{ \tot \eps_n^2 } =
\Bprn{ -\log(1-\eps_n) - \eps_n - \tot \eps_n^2 }^{1/2}  =
O(\eps_n^{3/2}),\]
so the continuous differentiability of  $w$ on $\pos$ and the fact that
$w(0)=1$, imply that 
\[ \oo{1-\eps_n} w\prn{ b_n\prn{ \tot \eps_n^2 } } = 1 + \eps_n +
O(\eps^{3/2}_n),\]
which, in turn, yields \eqref{equ:tail-0} for $\dl=0$.
Similarly, when $\dl\in (0,1)$ we have
\[ b_n\prn{ \tot (1-\dl) \eps_n^2 } = \sqrt{\tfrac{\dl}{2}}
\eps_n  + O(\eps^2_n),\]
and the limit in \eqref{equ:tail-0}  follows from
\[ \oo{1-\eps_n} w\prn{ b_n\prn{ \tot (1-\dl) \eps_n^2 } }
= 1 + (1-\sqrt{\dl})\eps_n + o(\eps_n).\]

For the remainder of the proof, suppose that $\dl\in (0,1)$.
Standard
properties of moment-generating functions allow us to
differentiate $k\in\N$ times inside the expectation sign at each
$\beta<\beta^{max}_n$ in \eqref{equ:mgf-W} above to obtain
\begin{align}
\label{equ:Hwp0}
\ee{ \ns{\tH_n}^k
\exp{ \beta \ns{\tH_n}}} &=
\oo{1-\eps_n}
(w\circ b_n)^{(k)}(\be)
\efor \beta < \beta^{max}_n,
\end{align}
where $(\cdot)^{(k)}$ denotes the $k$-th derivative in $\beta$.
The formula of Fa\' a di Bruno states that $(w\circ b_n)^{(k)}(\be)$ admits
a representation of the form
\begin{align}
\label{equ:Nelly}
\sum \frac{k!}{m_1!\dots m_k!} w^{(m_1+\dots+m_k)}\Bprn{ b_n(\be) }
\prod_{j=1}^k \prn{
\frac{b_n^{(j)}(\be)}{j!} }^{m_j},
\end{align}
where the sum is taken over all $m_1,\dots, m_k \in \N_0$ such that
$m_1+2 m_2+\dots + k m_k = k$.

We have
$b_n^{(j)}(\be) = (-1)^j j! \binom{1/2}{j}  (\beta^{max}_n -
\be)^{1/2-j}$ so that
\begin{align*}
\prod_{j=1}^k \prn{ \frac{b_n^{(j)}(\be)}{j!}}^{m_j}
=K\, (\beta^{max}_n -
\be)^{\frac{m_1+\dots+m_k}{2} - k}
\ewhere
K=\prod_{j=1}^k \prn{(-1)^j \binom{1/2}{j}}^{m_j}.
\end{align*}
The lowest power of $(\beta^{max}_n-\be)$ appearing in
\eqref{equ:Nelly}
is $1/2-k$ and is attained precisely at
$m_1= \dots =m_{k-1}=0$, $m_k=1$.
Furthermore, all functions
$w^{(m_1+\dots+m_k)}(x)$ converge to a finite limit as $x\downto
0$ which implies that
\[ (w\circ b_n)^{(k)}(\be) =  k! w'(b_n(\be)) (-1)^k \binom{1/2}{k}
(\beta^{max}_n-\be)^{1/2-k} +
o\prn{ (\beta^{max}_n-\be)^{1/2-k}}.  \]
Since
$b_n\prn{ \tot (1-\dl) \eps_n^2 } = \sqrt{\dl/2}\, \eps_n +
o(\eps_n)$ the
fact that $\lim_{x\downto 0}
w'(x)=-\sqrt{2}$ implies that 
\[ \lim_{n}
\frac{(w\circ b_n)^{(k)}(\be_n)}{\prn{\dl \eps_n^2/2}^{1/2-k}} =
\sqrt{2}  (-1)^{k-1} k! \binom{1/2}{k},\]
which, in turn, yields \eqref{equ:tail-1}.
\end{proof}

\subsection{Proof of Theorem \ref{thm:main}}
\label{sec:proof-main}

The proof is divided into several lemmas. These lemmas depend on the results
and notation from Sections \ref{sec:Riccati} and \ref{sec:aux-Hawkes} above, as
well as on the notation and concepts introduced ahead of Theorem \ref{thm:main} in
Section \ref{sec:main}.

\begin{itemize}[label=-,leftmargin=0em, itemindent=1.5em]

  \item An arbitrary but fixed time horizon 
$T >0$ is used throughout the proof, except in the very last paragraph
where the extension to $\pos$ is discussed.  All functions, measures, etc.
are implicitly restricted to $[0,T]$ 
whenever necessary, without any change in notation. 
  \item We continue to use all 
    the notation from Subsection \ref{ssc:notation}, but we omit the 
    domain $I$ because it will almost always be $[0,T]$. Any other domain
    will be explicitly stated. We also let 
$\sL^p$ and $\slop$,  $p \in [0,\infty]$
denote the standard Lebesgue spaces on $I$;
a.e.-equal functions are not identified.
$C_0$ denotes the family of
continuous functions $f$ on $[0,T]$ with $f(0)=0$.
The family of all signed measures on $[0,T]$ is denoted by 
  $\sms$ and we write $\abs{\mu}$ for the total variation of $\mu \in \sms$
  on $[0,T]$.
\item A function $f \in \ssi$ is said to be \define{of bounded
  variation} if there exists a signed measure $Df \in \sms$  such that $f(t) =
Df[0,t]$ for all $t\in \zT$. The Hahn decomposition of $Df$ is denoted by $Df
= D^+f - D^- f$ and the total variation measure $D^+f + D^- f$, associated to
$Df$, by $\abs{D}f$. The map $\n{f}_{\bv}:= \n{ \abs{D} f}_{\sms}$ is a
Banach norm on $\bv$.
\item 
Measures and functions are assumed to vanish outside  $\zT$ so that 
limits of integration, as in $(f*\mu)(t) = \int f(t-\cdot)\, d\mu$, 
do not have to be specified. 
\item Young's inequalities $\n{h*\nu}_{\slo} \leq \n{h}_{\slo}
\n{\nu}$ and $\n{h * \nu}_{\ssi} \leq \n{h}_{\ssi} \n{\nu}$ 
are used throughout without explicit mention. 
\item We use the notation of Subsection 
  \ref{sec:parH} with an 
additional subscript $n$ indicating
membership in a sequence. Furthermore, we adopt the
shorthand $$\eps_n = 1- a_n,$$ 
assume throughout that $\eps_n \to 0$, and define random measures $\xi^m_n$ and
$\txi^m_n$ by 
\begin{align}
\label{equ:def-ximn}
\xi^m_n := \eps_n^2 H^m_n \quad  \text{and} \quad \txi^m_n :=\eps_n^2
\tH^m_n.
\end{align}
\item A quantity is said to be a \emph{universal constant} if it depends
only on the primitives $\seq{a_n}$, $\seqmn{\pi^m_n}$ and $\seq{\mu_n}$,
and on the time horizon $T$. Such a constant is 
denoted by the generic symbol $C$, which may change from use to use.  The Landau notation $o$
and $O$ 
refers to asymptotics in $n$, uniformly in all variables except, perhaps,  
the primitives and $T$.
\item With the convolutional moment-generating
functional $M[\cdot]$ defined in \eqref{equ:mgf-fxi} we set
\begin{align}
\label{equ:def-hn}
h^m_n[f](t):=
\oo{\eps_n} \Big( \mgf_{\txi^m_n}[f](t) -1 \Big) =
\oo{\eps_n} \Big( \mgf_{\tH^m_n}[\eps_n^2 f](t) -1 \Big)
\in (-\infty,\infty]
\end{align}
for $f\in \ssi$ and $t \in [0,T]$. 
When the function $f$ is clear from the context, we often omit it
from the notation and write,
for example, $h^m_n$ for $h^m_n[f]$.
\item 
With a slight abuse of notation, we define 
rescaled versions of all the notation introduced above; Greek
superscripts  $\tau,\sg,\dots$ should be substituted by 
$\ct, \cs, \dots$:
\begin{align}
  \label{tau}
  h^{\tau}_n := h^{\ct}_n, \, \pi^{\tau}_n := \pi^{\ct}_n, \, \pi_n^{(\sg, \tau]}
  := \pi^{(\cs, \ct]}_n, \text{ etc.}
\end{align}
\item 
For $\nu_1,\nu_2\in\smlf\pos$ and $T\geq 0$, we set
\begin{align*}
\woT(\nu_1,\nu_2) = \int_0^{T} \abs{F_{\nu_1}(t) - F_{\nu_2}(t)}\, dt,
\end{align*}
where
$F_{\nu_i} = \nu_i([0,\cdot ])$, $i=1,2$,   are distribution functions of
$\nu_1$ and $\nu_2$.
When $\nu_1$ and $\nu_2$ are probability measures with supports in $\zT$,
$\woT(\nu_1,\nu_2)$ coincides with the $1$-Wasserstein distance between
$\nu_1$ and $\nu_2$ (see, e.g., \cite[Proposition 2.17, p.~66]{San15}).
\end{itemize}

\begin{lemma}
\label{lem:daily}
If $\nu,\nu'\in \smf$ and $h\in \bv$,
then  $h*\nu \in \bv$ with $D (h*\nu) = Dh *
  \nu$ and
  \begin{align}
    \label{equ:bd-2}
    \n{h*\nu}_{\bv} &\leq \n{h}_{\bv} \ns{\nu}
  \end{align}
  as well as
  \begin{align}
    \label{equ:bd-3}
    \n{ h*(\nu -\nu')}_{\slo} &\leq \n{h}_{\bv} \, \woT (\nu,\nu').
  \end{align}
\end{lemma}
\begin{proof}
  We have $(h*\nu)(t) = (Dh*\nu)[0,t]$, which implies that
  $h*\nu\in \bv$ with $D (h*\nu) = Dh*\nu$.  Therefore,
  \begin{align*}
    \n{h*\nu}_{\bv} \movealign{4}=
    \n{ D(h*\nu)} =
    \n{ Dh*\nu}
    \\ &=
    \n{ Dh^+*\nu - Dh^-*\nu}
    \leq (Dh^+*\nu) + (Dh^-*\nu)
    \\ &\leq Dh^+\, \nu + Dh^- \,  \nu
    = \n{Dh} \n{\nu} = \n{h}_{\bv} \n{\nu}.
  \end{align*}
  To prove \eqref{equ:bd-3}, we observe that
  \begin{align*}
    (Dh *\nu)[0,t] = (\nu*Dh)[0,t] = (F_{\nu}*Dh)(t) \efor
    t\geq 0,
  \end{align*}
  where $F_{\nu}(t) = \nu[0,t]$ denotes the distribution function of $\nu$.
  Hence,
  \begin{align*}
    \abs{h*\nu(t) - h*\nu'(t)} \leq
    \int  \abs{F_{\nu}(t-u) - F_{\nu'}(t-u)}\,
  \abs{D}h(du) \efor t\geq 0,
  \end{align*}
  and so,
  \begin{equation*}
    \begin{split}
      \int_0^T \abs{ h*\nu(t) - h*\nu'(t)}\, dt
      \movealign{4} \leq \int_0^T \int \abs{F_{\nu}(t-u) - F_{\nu'}(t-u)}
      \, \abs{D}h(du)\, dt
      \\ &\leq \int \int_0^T \abs{F_{\nu}(t) - F_{\nu'}(t)} \, dt\,
      \abs{D}h(du)
      = \n{h}_{\bv} \woT(\nu,\nu').  \qedhere
    \end{split}
  \end{equation*}
\end{proof}

\begin{lemma}
\label{lem:diver}
There exists a universal constant $C$ such that
for $m\in\N_0, n\in\N$ and 
$f\in\ssi$ with $f\leq \tot (1-\dl)$ we have
\begin{enumerate}
\item $h^m_n\in \ssi$ and
  \begin{align}
    \label{equ:Bragi}
    \inf_{u\in \zT} f(u) \leq h^m_n(t) \leq C \dl^{-1/2} 
    \sup_{u\in \zT} f(u) \efor
    t\in [0,T].
  \end{align}
\item If, additionally,  $f\in \bv$, then $h^m_n \in \bv$
  and
  \begin{align}
    \label{equ:bnd-BV}
    \n{h^m_n}_{\bv} &\leq C \n{f}_{\bv}.
  \end{align}
\end{enumerate}
\end{lemma}
\begin{proof}
We pick $\delta \in (0,1)$ and $f\in\ssi$ with $f\leq \tot (1-\dl)$
as in the statement and set
\begin{align}
\label{equ:Zmn}
Z^m_n = \exp{ \drtr \abs*{\txi^m_n}_{\smlf\pos}}
\end{align}
with $\txi^m_n$ as defined in \eqref{equ:def-ximn}.
Note that the distribution of $Z^m_n$ does not depend on $m$.

Since $f*\txi^m_n \leq \tot (1-\dl) \abs*{\txi^m_n}$,
\eqref{equ:tail-1} with $k=1$ implies that
\begin{equation}
\label{equ:craft}
\begin{split}
  h^m_n(t) &\leq  \eps_n^{-1} \ee{ Z^m_n (f*\txi^m_n)(t)}\leq
  \Bprn{ \textstyle\sup_{u\in [0,T]} f(u) }
  \eps_n \ee{Z^m_n \ns{\tH^m_n}}\\ & \leq C \delta^{-1/2}
  \textstyle \sup_{u\in [0,T]} f(u) \efor t\in \zT.
\end{split}
\end{equation}
To obtain a lower bound, we use Jensen's inequality and
the identity \eqref{equ:mm-by-rhom}:
\begin{align*}
h^m_n(t) &\geq   \eps_n^{-1}
\prn{\exp{ \ee{ (f*\txi^m_n)(t)}} -1}
\geq \eps_n^{-1} \ee{ (f*\txi^m_n)(t)}
\\ &= (f*\rho^m_n)(t) \geq \inf_{ u\in\zT } f(u).
\end{align*}
To establish \eqref{equ:bnd-BV}, we pick
$0\leq r\leq s \leq T$  and observe that
\begin{equation}
\label{equ:abs-hn}
\begin{split}
  \abs{ h^m_n(s) - h^m_n(r)}
  \hspace{-5em} & \hspace{5em}
  \leq
  \eps_n^{-1} \ee{ \abs{\exp{  f * {\txi^m_n}(s)} - \exp{  f*{\txi^m_n}(r)}} }\\
  & \leq  \eps_n^{-1} \ee{
    Z^m_n
  \abs{ f*{\txi^m_n}(s) - f*{\txi^m_n}(r)}}\\
  &\leq  \eps_n^{-1}\prn{F^m_n(r,s) + \hF^m_n(r,s)},
\end{split}
\end{equation}
where
\begin{align*}
F^m_n(r,s) &=
\ee{
  Z^m_n
\int_0^r \abs{f(s-u)-f(r-u)}\, \txi^m_n(du)}, \eand \\
\hF^m_n(r,s) &=    \ee{ Z^m_n \int_r^s \abs{f(s-u)}\, \txi^m_n(du)}.
\end{align*}
Since $\abs{f(b) - f(a)} \leq \int \ind{(a,b]}\, d\abs{D}f$, 
for all $a<b$ in $\zT$, we have
\begin{align*}
F^m_n(r,s) \leq \ee{ Z^m_n \iint  \inds{v\in (r-u,s-u], u\leq r}\,
  \abs{D}f(dv)\,
\txi^m_n(du)} .
\end{align*}
Hence, for $\kappa\in (0,T)$,
\begin{align*}
\oo{\kappa} \int_0^{T-\kappa} F^m_n(r,r+\kappa)\, dr
\movealign{5}
=
\\ &= \ee{ Z^m_n
  \iint \oo{\kappa} \int \inds{ r\in [v+u- \kappa, v+u) \cap [T-\kappa,u]}\, dr
\, \abs{D}f(dv)\txi^m_n(du)} \\
&\leq   \ee{  Z^m_n
\iint \abs{D}f(dv) \txi^m_n(du)  } \leq \n{f}_{\bv} \ee{ Z^m_n
\abs*{\txi^m_n}} \leq \eps_n C \n{f}_{\bv},
\end{align*}
where the last inequality follows from \eqref{equ:tail-1} with $k=1$. 
Similarly,
\begin{align*}
\oo{\kappa} \int_0^{T-\kappa} \hF^m_n(r,r+\kappa)\, dr &\leq
\oo{\kappa} \int_0^T \n{f}_{\ssiT} \ee{ Z^m_n \txi^m_n([u,u+\kappa])}\, du \\
&\leq
\n{f}_{\ssi}  \ee{Z^m_n  \int \oo{\kappa} \int \inds{s\in [u,u+\kappa]}\, du\,
\txi^m_n(ds) }
\\ & \leq  \eps_n C \n{f}_{\ssi} \leq \eps_n C \ns{f}_{\bv}.
\end{align*}
Therefore,
\begin{align}
\label{equ:equi-int}
\oo{\kappa} \int_0^{T-\kappa} \ns{h^m_n(u+\kappa) - h^m_n(u)}\, du &\leq
C  \ns{f}_{\bv}
\end{align}
uniformly in $m\in\N_0$, $n\in\N$ and $\kappa \in (0,T]$.
By \cite[Corollary 2.51, p.~53]{Leo17},
for each $m\in \N_0$ and $n\in\N$ there exists a 
signed measure $\iota^m_n$ on $(0,T]$
such that $\n{\iota^m_n}_{\sms} \leq C \n{f}_{\bv}$ and
\[ h^m_n(t)  - h^m_n(0) = \iota^m_n((0,t]) \text{ a.e., } \eforall t\in
\zT.\]
Thanks to  Lemma \ref{lem:daily}, 
we have $f*\xi^m_n\in \bv$ a.s.~
for any $f\in \bv$, and, in particular, 
$\lim_{s \downto t} (f*\xi^m_n)(s) = (f*\xi^m_n)(t)$ a.s.
If, additionally, $f \leq \drtr$ we have, as above,
\[ \sup_{t} \abs{\exp{(f*\txi^m_n)(t)} - 1} \leq C Z^m_n \ns{\txi^m_n}
\in \lone,\]
so that the dominated convergence theorem
can be used to conclude that the function $h^m_n$ is right continuous. 
This implies that $h^m_n(t) - h^m_n(0) =\iota^m_n((0,t])$ everywhere
and, consequently, that $h^m_n \in \bv$. 
Finally, by \eqref{equ:Bragi}, we have
\[ \n{h^m_n}_{\bv} \leq
\n{h^m_n - h^m_n(0)}_{\bv} + \abs{h^m_n(0)} \leq
C \n{f}_{\bv}. \qedhere\]
\end{proof}

\begin{lemma}
\label{lem:aswim0}
For all $m \in \N_0$, $n \in \N$, and $f \in \bvl $ with $\sup f <
1/2$,  we have
\begin{align}
  \label{equ:master}
h^m_n &= \sum_{k\geq m}\eps_n 
  (1-\eps_n)^{k-m} \bprn{f + (h^{k}_n)^2} *\pimk_n + r^m_n
  \ewhere
\n{r^n_m}_{\bv} = o(1).
\end{align}
\end{lemma}
\begin{proof}
Thanks to the relation \eqref{equ:hm-e} of Lemma 
\ref{lem:albee}, we have
\begin{align*}
\eps_n^2 f + \eps_n (1-\eps_n) h^{m+1}_n*\pi^{m+1}_n  =  \log(1+\eps_n h^m_n)
= \eps_n h^m_n - \tot \eps_n^2 \theta_n^m (h^m_n)^2 
\end{align*}
where $\theta_n^m = 1 + o(1)$. 
Therefore, 
\begin{align}
\label{equ:hmn-equ0}
h^m_n - (1-\eps_n) h^{m+1}_n*\pi^{m+1}_n =\eps_n  f +\tot \eps_n \theta^m_n
(h^m_n)^2.
\end{align}
Given $k\geq m$, we replace $m$ by $k$ in
\eqref{equ:hmn-equ0}, convolve it with
$\pimk_n$ and multiply by $(1-\eps_n)^{k-m}$ to obtain:
\begin{multline*}
(1-\eps_n)^{k-m} h^{k}_n*\pimk_n - (1-\eps_n)^{k+1-m} h^{k+1}_n*\pimkp_n = \\  =
\eps_n
(1-\eps_n)^{k-m} f*\pimk_n +
\tot
\eps_n (1-\eps_n)^{k-m} \theta^{k}_n (h^{k}_n)^2*\pimk_n.
\end{multline*}
Summing over $k\geq m$ gives
\begin{equation*}
\begin{split}
h^m_n &= \sum_{k\geq m} \eps_n
  (1-\eps_n)^{k-m} \bprn{f + \theta^{k}_n (h^{k}_n)^2} *\pimk_n.
\end{split}
\end{equation*}
It remains to show that 
\begin{align*}
r^m_n := \tot \sum_{k \geq m} \eps_n (1-\eps_n)^{k-m}  
(\theta^{k}_n-1) (h^{k}_n)^2*\pi^{(m,k]}
\end{align*}
converges to $0$ in $\bv$, uniformly in $m$. For that, we  
observe that  
\begin{align}
\label{apsis}
\sup_{m} \abs{r^m_n}_{\bv} &\leq
\tot
\sup_{m} \abs{\theta^k_n-1}
\sup_{m\geq 0, k \geq m} \n{(h_n^{k})^2*\pi^{(m,k]}_n}_{\bv}.
\end{align}
Since $\theta^m_n = 1 + o(1)$, 
it will suffice to show 
that the second factor in the product on the right in \eqref{apsis} is 
uniformly bounded in $n$.
This follows from the inequality
\eqref{equ:bd-2}, estimate  \eqref{equ:bnd-BV}
and the observation that $(h^m_n)^2\in \bv$ and
\begin{equation}
\label{equ:sq}
\n{(h^m_n)^2}_{\bv} \leq  2 \n{h^m_n}_{\ssi} \n{h^m_n}_{\bv} \leq
2 \n{h^m_n}_{\bv}^2 \leq C
\n{f}_{\bv}^2. \qedhere
\end{equation}
\end{proof}

% \begin{lemma}
% \label{lem:mirka}
% For each $f\in\bv$ with $\sup f < 1/2$,
% there exist sequences $\seq{r_n}$ and $\seq{R_n}$ in $\slol$, 
% such that
% \begin{align*}
% h^0_n
% & = \ee{f*\pizG_n}+ \tot \ee{(h^{0}_n)^2*\pizG_n}
% + r_n + R_n,
% \end{align*}
% where 
%   $r_n \to 0$ in $\slol$ and there exists a universal constant
%   $C\geq 0$ such that
%   \begin{align*}
%   \n{R_n}_{\slo}
%   \leq C \ee{ \n{h_n^{G_n} - h^0_n}_{\slo}} \eforeach T\geq 0.
%   \end{align*}
% \end{lemma}
% \begin{proof}
% The equation \eqref{equ:master} of Lemma \ref{lem:aswim0} can be rewritten for
% $m=0$ as
% \begin{align*}
% h^0_n
% & = \ee{f*\pizG_n}+ \tot \ee{(h^{0}_n)^2*\pizG_n}
% + r_n+ R_n
% \end{align*}
% where $r_n = r^0_n$ of Lemma \ref{lem:aswim0} and
% \begin{align*}
% R_n &= \tot \ee{\prn{ (h^{G_n}_n)^2 - (h^{0}_n)^2}*\pizG_n}.
% \end{align*}
% By Lemma \ref{lem:aswim0}, we have $\lim_n r^0_n =0$
% in $\bv$, and, therefore, also in $\slol$.
% Thanks to Young's inequality,
% the fact that $ \n{h^{m}_n}_{\ssi}$ is uniformly bounded over $m,n$ for
% each $T\geq 0$,
% Lemma \ref{lem:daily} and equation \eqref{equ:sq}, we have
% \begin{equation}
% \begin{split}
% \n{R_n}_{\slo} & \leq \tot \ee{
%   \n{ \prn{(h^{G_n}_n)^2 -
% (h^{0}_n)^2}*\pizG_n}_{\slo}} \\
% &\leq \tot \ee{
% \n{ (h^{G_n}_n)^2 - (h^{0}_n)^2}_{\slo} \pizG_n}
% \leq
% C \ee{\n{h^{G_n}_n - h^{0}_n}_{\slot}}. \qedhere
% \end{split}
% \end{equation}
% \end{proof}
\noindent Recall that a \define{modulus of continuity} is a 
function $\omega:
(0,\infty) \to \pos$ such that $\lim_{\delta \to 0+} \omega(\delta) =0$, and that
the notation $h^{\tau}$ was introduced in \eqref{tau} above. 
\begin{lemma}
\label{shack}
Suppose that Assumption \ref{A} holds and 
that $f \in \bv$ satisfies $\sup f < 1/2$. 
Then there exists a modulus of continuity $\omega$
such that
\begin{align*}
  \limsup_n \n{ h^{\tau}_n - h^{\sg}_n}_{\slo} \leq \omega(\tau-\sg) 
  \eforall \tau > \sg \geq 0. 
\end{align*}
\end{lemma}
\begin{proof}
  For $m \leq M \in \N$
  the representation \eqref{equ:master} of Lemma \ref{lem:aswim0} gives
  \begin{align*}
    h^M_n * \pi_n^{(m,M]}   
    % &= \sum_{k\geq M} \eps_n(1-\eps_n)^{M-k} (f+\tot (h^k_n)^2)*
    % \pi_n^{(m,k]} \\
    % &= (1-\eps_n)^{M-m} \sum_{k\geq M} \eps_n(1-\eps_n)^{m-k} (f+\tot (h^k_n)^2)*
    % \pi_n^{(m,k]} \\
    &= (1-\eps_n)^{M-m} \Bprn{
      h^m_n + \sum_{k=m}^{M-1} \eps_n(1-\eps_n)^{k-m} (f+\tot (h^k_n)^2)*
  \pi_n^{(m,k]}} + R^{m,M}_n,
  %   % &= \ee{ \Bprn{f + \tot (h^{G_n})^2}*\pi^{(m,G_n]}_n  \inds{G_n \geq M}}
  %   % \oo{\pp{ G_n \geq M}} \\
  %   \hspace{-3em} 
  %     & \hspace{3em}  
  %   = \ee{ \Bprn{f + \tot (h^{G_n})^2}*\pi^{(m,G_n]}_n  \inds{G_n \geq M}
  %   \given G_n \geq m} \pp{G_n \geq M-m}\\
  %     & = \pp{ G_n \geq M-m}\Bprn{ h^m_n + 
  %   \ee{ \Bprn{f + \tot (h^{G_n})^2}*\pi^{(m,G_n]}_n  \inds{ m \leq G_n < M}
  % \given G_n \geq m} }
  \end{align*}
  where $\ns{R^{m,M}_n}_{\bv} = o(1)$. 
  The uniform boundedness of $\arr{h^m_n}$ in $\ssi$ yields the 
  following bound: 
\begin{align*}
    \n{ h^M_n * \pi_n^{(m,M]}  -
    (1-\eps_n)^{M-m} 
  h^m_n}_{\slo} 
  \hspace{-10em} & \hspace{10em} \leq C  \Big( \sum_{k=m}^{M-1}\eps_n(1-\eps_n)^{k-m}
  + \n{R^{m,M}_n}_{\slo} \Big)\\
 & = C( 1 - (1-\eps_n)^{M-m} ) + o(1).
  \end{align*}
  Substituting $m=\cs$ and $M = \ct$, we get 
\begin{align}
  \label{ropes}
  \n{h^\tau_n  - h^\sg_n}_{\slo} & \leq 
  p_n \n{ h^\sg_n}_{\slo} + 
  \ns{ h^\tau*\pi_n^{(\sg,\tau]} - h^\tau}_{\slo} + C p_n, 
\end{align}
where, as in \eqref{unode},  
\begin{align*}
p_n := 1 - (1-\eps_n)^{\ct - \cs}
= 1-e^{-(\tau - \sg)} + o(1) \leq (\tau - \sg) + o(1). 
\end{align*}
Boundedness of $\arr{h^m_n}$ in $\bv$ and the inequality
\eqref{equ:bd-3} of Lemma \ref{lem:daily} imply that
\begin{align*}
  \limsup_n \ns{ h^\tau*\pi_n^{(\sg,\tau]} - h^\tau}_{\slo} 
  &\leq C \limsup_n  \woT(\pi_n^{(\sg,\tau]}, \delta_0) = 
 C \limsup_n  \int_0^T t \pi_n^{(\sg, \tau]}(dt).
\end{align*}
Thanks to 
Assumption \ref{A} 
and the Portmanteau theorem,
the last $\limsup$ is bounded from above by 
  $\omega_0(\tau - \sg)$, where $\omega_0(\zeta) = 
   \int_0^T t \nu^{\zeta}(dt)$. 
The fact that $\lim_{\zeta \to 0} 
\nu^{\zeta} = \delta_0$ implies that $\omega_0$ is a modulus of continuity
and, then, so is 
$\omega(\zeta) = C( \omega_0(\zeta) + \zeta)$.
\end{proof}
\begin{proposition}
\label{paned}

Suppose that Assumption \ref{A} holds
and that $f \in \bv$ satisfies $\sup \abs{f} < 1/2$. 
Then the sequence $h^0_n$ converges in  $\slone$ to  
the unique solution $h \in \bv$ of the equation 
\eqref{equ:h-eq} with $\hrho = (1-\log(\hnu))^{-1}$. 
\end{proposition}
\begin{proof}
  For each $\tau \in \pos$ the sequence $\seq{ h^{\tau}_n}$ is bounded in
  $\bv$  by \eqref{equ:bnd-BV} of Lemma \ref{lem:diver}. Therefore, the
  compact containment of $\bv$-bounded sets in $\slo$ (see \cite[Theorem
  5.5, p.~203]{EvaGar92}) implies that $\seq{h^{\tau}_n}$ is relatively
  compact in $\slo$. Together with the asymptotic equicontinuity
  established in Lemma
  \ref{shack}, this allows us to apply the Arzel\`a-Ascoli theorem to the
  sequence $\seq{h_n}$, where each $h_n$ is  interpreted as a function
  from $\pos$ to $\slo$. Each subsequence of $\seq{h_n}$, therefore, admits
  a further subsequence along which $h^{\tau}_n$ converges in $\slo$,
  uniformly for $\tau$ in compact sets.  

We choose a convergent subsequence of $\seq{h_n}$, which we 
do not relabel, and 
denote its limit by $\tih$. Since
$\seq{h^{\tau}_n}$ is bounded in $\bv$ and converges in $\slo$ to
$\tih^{\tau}$, 
we have $\tih^{\tau} \in \bv$ (see 
\cite[Theorem 5.2, p.~199]{EvaGar92}) and 
\begin{align}
  \label{Trias}
  \ns{\tih^{\tau}}_{\bv} \leq \liminf_n\ns{h^{\tau}_n}_{\bv} \leq
  C\ns{f}_{\bv} \eforall \tau\geq 0.
\end{align}

The equation \eqref{equ:master} of Lemma \ref{lem:aswim0} with $m=\ct$ can
be rewritten as
  \begin{align}
    \label{prel}
    h^{\tau}_n = \int_{\eps_n\ct}^{\infty} (1-\eps_n)^{\tcs - \tct}
    \Big( f + \tot (h^{\sg}_n)^2\Big)* \pi_n^{( \tau, \sg]}\, d\sg +
      r^{\tau}_n
  \end{align}
  where $\sup_{\tau} \n{r^{\tau}_n}_{\slo} = o(1)$.
As in \eqref{equ:sq}, the family $\sets{ f + \tot (h^{\sg}_n)^2}{\sg \in
\pos, n \in\N }$ admits a uniform $\bv$-bound. Moreover,
the weak convergence $\pi_n^{(\tau, \sg]} \to
\nu^{\sg-\tau}$ of Assumption \ref{A} implies 
that $\woT(\pi_n^{(\tau,\sg]}, \nu^{\sg-\tau}) \to 0$, and so, 
\eqref{equ:bd-3} and \eqref{Trias} yield
\begin{align*}
    \Big(f + \tot (\tih^{\sg})^2\Big)* \pi_n^{( \tau, \sg]} \to 
    \Big(f + \tot (\tih^{\sg})^2 \Big)* \nu^{\sg -  \tau} \text{ in }\bv.
\end{align*}
Since $h^\sg_n \to \tih^\sg$ in $\slo$ and the sequence $\seq{h^{\sg}_n}$
is bounded in $\ssi$, we also have 
\begin{align*}
    \Big(f + \tot (h_n^{\sg})^2 \Big)* \nu^{\sg -  \tau} \to 
    \Big(f + \tot (\tih^{\sg})^2 \Big)* \nu^{\sg -  \tau} \text{ in } \slo.
\end{align*}
By \eqref{unode}, 
$(1-\eps_n)^{\cz}$ converges to  $e^{-\zeta}$ from below for each $\zeta$.
Consequently, 
the integrand in \eqref{prel} admits
a uniform $\slo$-bound of the form $C e^{-\sg}$. 
We may therefore
use the dominated convergence theorem to conclude that $\tih$ satisfies 
\begin{equation}
  \label{ekoeko}
  \begin{split}
  \tih^{\tau} & = \int_\tau^\infty e^{-(\sg-\tau)} 
  \Big( f + \tot (h^{\sg})^2\Big) * \nu^{\sg -  \tau}\, d\sg\\
  & = \int_0^\infty 
  \Big( f + \tot (h^{\zeta+\tau})^2\Big) * \nu^{\zeta} e^{-\zeta} \, d\zeta
   \eforall \tau \geq 0.
\end{split}
\end{equation}
  By Lemma \ref{lem:sys-sol}, 
 or by the general theory of Volterra equations (see, e.g., \cite{Mil71}),
 the equation \eqref{equ:h-eq} admits a unique solution  in $\ssi$ which we
 denote by $h$. 
With $\hh$ denoting the Laplace transform  of $h$ and 
$\psi = -\log(\hnu)$, we have
\begin{align*}
  \hh = \frac{(f + \tot h^2)^{\lap}}{1+\psi} = 
  \int_0^{\infty} e^{-\sg(1+\psi)} (f+\tot h^2)^{\lap} \, d\sg 
  =
  \prn{\int_0^{\infty} e^{-\sg(1+\psi)} (f+\tot h^2) \, d\sg
  }^{\lap},
\end{align*}
and it follows that the constant function $h^{\tau} = h$ solves
\eqref{ekoeko}.

Our next task is to show that $\tih^{\tau} = h$ for all $\tau$. 
To do so, we prove
that \eqref{ekoeko} admits a unique solution in the class of all measurable
functions $\tau \mapsto g^{\tau}$ with $\sup_{\tau} \n{g^{\tau}}_{\ssi}
< 1$. 
Thanks to the boundedness assumption we made
on the function $f$ in the statement, both the constant function $h$ and
the limit $\tih$ have this property. Indeed, in the case of $h$ it follows
from Lemma \ref{lem:palma}, while in the case of $\tih$ it is inherited
from the sequence $\seq{h_n^{\tau}}$  via the bound \eqref{equ:tail-0} of
Proposition \ref{pro:dhava}. 

Suppose that 
there exists two solutions, $g_1$ and $g_2$, of \eqref{ekoeko} such 
that
$\ns{g^{\tau}_1}_{\ssi},
\ns{g^{\tau}_2}_{\ssi} \leq 1-\kappa$ for all $\tau>0$ and some
$\kappa>0$.
The function
$M(\tau) := \n{g^{\tau}_1 - g^{\tau}_2}_{\ssi}$ satisfies the inequality
\begin{align*}
M(\tau) \leq
\int_{\tau}^{\infty} 
  \tot\Big(\ns{g^{\sg}}_{\ssi}+\ns{\tg^{\sg}}_{\ssi}\Big)
  M(\sg)
  e^{-(\sg-\tau)}
  \, d\sg
  \leq  (1-\kappa) \int_{\tau}^{\infty} M(\sg)
  e^{-(\sg-\tau)}\, d\sg.
\end{align*}
The last integral averages the function $M$
over $[\tau,\infty)$ so
for each $\tau$ we can find $\tau' \geq \tau$ such that 
$M(\tau) \leq (1-\kappa/2) M(\tau')$.
Iterating this procedure
and starting from any $\tau_0 =\tau $, 
we can find a nondecreasing sequence $\seq{\tau_n}$ such that
$M(\tau) \leq (1-\kappa)^n M(\tau_n) \leq 2 (1-\kappa)^n$. This
implies that $M(\tau) = 0$ for each $\tau$, i.e., 
that $g_1 = g_2$.
Hence, each convergent subsequence of $\seq{h_n}$ converges to the same
limit, so, by relative compactness, we may conclude that the entire sequence
converges to the constant function $ \tau \mapsto h$. 
\end{proof}
\begin{proof}[Conclusion of the proof of Theorem \ref{thm:main}]
Let $p>1$ and $\dl \in (0,1)$ be such that
$ p \sup \abs{f} \leq \tot (1-\delta)$.
The expression \eqref{equ:sL-H-h} for the moment-generating function
of $H_n$ implies that, for $T \geq 0$, 
\begin{equation}
  \label{Bassa}
  \begin{split}
  \log \ee{ \prn{ \Exp{ (f*\xi_n)(T)}}^p}
  \movealign{8} \leq  \log M_{H_n}[\drtr \eps_n^2  ](T) =
\prn{(M_{\tH_n}[ \eps_n^2 \drtr  ] - 1)*\mu_n}(T)
\\ & \leq
\prn{\ee{ \Exp{ \eps_n^2 \drtr \tH_n[0,\cdot] }-1}*\mu_n}
(T) \leq \eps_n^{-1} \ee{ \Exp{ \eps_n^2 \drtr \abs{\tH_n} }-1}\,  
\eps_n \mu_n[0,T].
  \end{split}
\end{equation}
Estimate \eqref{equ:tail-0} of Proposition \ref{pro:dhava} and the
fact that $\eps_n \mu_n$ converges weakly on $[0,T]$ imply that the 
last expression in \eqref{Bassa} is bounded in $n$. Hence,  
\begin{align}
  \label{UI}
  \Bsets{\bexp{(f*\xi_n)(T)}}{n\in\N} \text{ is a uniformly 
  integrable family.}
\end{align}

In particular, the sequence $\seq{ \ee{ \xi_n[0,T]}}$ is bounded.
Therefore, the standard tightness criterion
(see, e.g.,  \cite[Theorem 4.10, p.~118]{Kal17})
for weak convergence of random measures,
implies that the sequence
$\seq{\xi_n} := \seq{\xi^0_n}$ is tight on $[0,T]$. 

We pick an arbitrary convergent subsequence of $\seq{\xi_n}$, and,
taking the usual liberty of not relabeling the indices, we
denote this sequence by $\seq{\xi_n}$ as well, and
its limit by $\xi$. 
For $f \in \bv$ with $\sup \abs{f} < 1/2$, 
let $\{ h^m_n\}_{m,n} = \{ h^m_n[f] \}_{m,n}$ be the array associated
with $f$ as in \eqref{equ:def-hn} above.
Proposition \ref{paned} states that $h^0_n \to h$ in $\slo$, 
where $h \in \bv$ is the
unique solution of \eqref{equ:h-eq}.
The weak convergence $\eps_n\mu_n \to \mu$ on $[0,T]$ 
implies that $\woT(\eps_n\mu_n,\mu) \to 0$ for each $T>0$, and we 
can use \eqref{equ:bd-3} to obtain 
\begin{align*}
\n{\eps_n h_n *  \mu_n - h*\mu}_{\slo} &\leq
\n{(h_n - h)*\eps_n \mu_n}_{\slo}
+ \n{h*(\eps_n \mu_n - \mu)}_{\slo}
\\ & \leq C \n{h_n - h}_{\slo} + \n{h}_{\bv} \woT(\eps_n \mu_n, \mu) \to 0.
\end{align*}
Therefore, if, additionally, $f\in C_0 \cap \bv$,
the convergence $\xi_n \Rightarrow \xi$ and \eqref{UI}
imply that
\begin{align}
\label{equ:xif}
\mgf_{\xi}[f] = \lim_n  \mgf_{\xi_n}[f] = \lim_n \Exp{  \eps_n h_n *
\mu_n } = \Exp{ h*\mu } \text{ a.e.},
\end{align}
where, if necessary, we pass to a subsequence to  
guarantee a.e.-convergence of $\eps_n h_n*\mu_n$ to $h*\mu$.
The estimates in \eqref{equ:tail-1} and 
\eqref{Bassa} above
imply that asdf 
\begin{align*}
  \liminf_n \log\ee{ e^{\drtr \xi_n[0,T]}} \leq 
  \liminf_n C \eps_n^{-1} 
  \ee{ e^{ \tot \eps_n^2 (1-\delta) \abs{\tH_n}}-1} \leq C(1-\sqrt{\dl})
\end{align*}
for some $C$ independent of $n$ and $\dl$. 
The Portmanteau theorem then guarantees that $\ee{ \exp{ \drtr
\xi[0,T]}} \leq C$ for all $\dl>0$, and, consequently,  that
\begin{align}
  \label{mmmm}
  \ee{ \exp{ \tot \xi[0,T] }} \leq C < \infty. 
\end{align}
The convolution $f*\xi$ is in $\bv$ and, therefore, right continuous.
The random variable $\tot \xi[0,T]$ is an upper bound for $f*\xi$, 
so, thanks to \eqref{mmmm} 
we can use the dominated convergence theorem to conclude that 
this right continuity is inherited by
$M_{\xi}[f]$.
Consequently,
\eqref{equ:xif} can be strengthened to a pointwise equality
\begin{align}
\label{equ:Kathy}
\mgf_{\xi}[f] = \exp{ h[f]*\mu} \eforall f\in C_0 \cap \bv \ewith
\sup \abs{f} < 1/2. 
\end{align}
Thanks to \cite[Corollary 2.3, p.~53]{Kal17} enhanced by an 
additional, standard
approximation step, the class of functions $f$ in \eqref{equ:Kathy} is
rich enough 
to fully determine the law of $\xi$ on $[0,T]$. Therefore, all 
convergent subsequences of the original sequence $\seq{\xi_n}$ converge 
in distribution to the same limit. By tightness, this in turn
implies that the original sequence $\seq{\xi_n}$ 
converges weakly on $[0,T]$ to the random measure $\xi$ characterized 
by \eqref{equ:Kathy}.

\smallskip

The next part of the proof extends the characterization \eqref{equ:Kathy}
of the limit to the class of all functions $f \in C_0$ with $\sup \abs{f}
\leq 1/2$. Given such an $f$, we start by 
choosing a sequence $\seqk{f_k}$ in $C^1$ with
$f_k(0)=0$ such that $\sup \abs{f_k}< 1/2$ 
and $\n{f-f_k}_{\ssi} \to 0$.
Since $C^1 \subseteq C_0 \cap \bv$, each $f_k$  belongs to the class 
defined in \eqref{equ:Kathy}. On the one hand, the dominated convergence
theorem implies, via \eqref{mmmm}, that
$M_\xi[f_k](t) \to M_{\xi}[f](t)$ for all $t \in [0,T]$. 
On the other hand, Proposition \ref{pro:tangi} states that
\[ \int_0^T \n{h[f_k] - h[f]}_{\ssit}\, dt \to 0. \]
The monotonicity of
$t \mapsto \n{h[f_k] - h[f]}_{\ssit}$ implies that $h[f_k] \to h[f]$
uniformly on each compact subset of 
$[0,T)$.  Since $\mu$ is locally bounded and
$\seqk{h[f_k]}$ admits a uniform $\ssi$ bound, Young's inequality 
implies that 
$h[f_k]*\mu \to h[f]*\mu$
pointwise on $[0,T)$, and, consequently, that $M_{\xi}[f] = \exp{ h[f]*\mu}$
on $[0,T)$. 

\smallskip

The remaining step, namely  passing from a finite horizon $[0,T]$ to
$\pos$, is achieved by the standard diagonalization procedure over a sequence
of intervals $[0,T_k]$ with $T_k \to \infty$.
\end{proof}

\bibliographystyle{amsalpha}
\bibliography{Hawkes}       
\end{document}